\DeclareMathOperator{\coker}{coker}
\DeclareMathOperator{\Tr}{Tr}
\DeclareMathOperator{\Hom}{Hom}
\DeclareMathOperator{\Gr}{Gr}
\DeclareMathOperator{\Ext}{Ext}
\DeclareMathOperator{\Imm}{Im}
\DeclareMathOperator{\KZ}{\mathtt{KZ}}
\DeclareMathOperator{\HH}{\mathcal{H}}
\DeclareMathOperator{\DD}{\mathcal{D}}
\DeclareMathOperator{\AAA}{\mathcal{A}}
\DeclareMathOperator{\hh}{\mathbf{h}}
\DeclareMathOperator{\grade}{\mathsf{grade}}
\newcommand{\into}{\hookrightarrow}
\newcommand{\onto}{\twoheadrightarrow}
\def\CC{\mathbb{C}}
\def\NN{\mathbb{N}}
\def\QQ{\mathbb{Q}}
\def\ZZ{\mathbb{Z}}
\def\PP{\mathbb{P}}
\def\OO{\mathcal{O}}
\newtheorem{theorem}{Theorem}[section]
\newtheorem{lemma}[theorem]{Lemma}
\newtheorem{prop}[theorem]{Proposition}
\newtheorem{corr}[theorem]{Corollary}
\theoremstyle{definition}
\newtheorem*{remark}{Remark}
\begin{document}

\title{Finite dimensional representations of the rational Cherednik algebra for $G_4$}
\author{Yi Sun}
\address{206 Cabot Mail Center, 60 Linnaean St., Cambridge, MA 02138.}
\email{yisun@fas.harvard.edu}

\date{\today}
\begin{abstract}
In this paper, we study representations of the rational Cherednik algebra associated to the complex reflection group $G_4$.  In particular, we classify the irreducible finite dimensional representations and compute their characters.
\end{abstract}

\maketitle

\section{Introduction}

For a complex reflection group $W$ with reflection representation $V$, we may define the rational Cherednik algebra $H_c(W)$ to be a deformation of the algebra $\CC[W] \ltimes \mathcal{D}(V)$ dependent on parameters $c_s$ for each conjugacy class of reflections $s$ in $W$. Some recent study has focused on the classification of representations of $H_c(W)$ for various groups $W$.  The basic question in this case is to determine the characters of the irreducible $H_c(W)$-modules in a certain category $\OO$ for all values of the parameter $c$, and in particular to determine which of them are finite dimensional and to find their dimensions.  For groups of type $A$, this question was studied in \cite{BEG} and \cite{CE}, and for dihedral groups, it was addressed in \cite{Chm}.

In this paper, we classify and give character formulas for the finite dimensional irreducible representations for $W = G_4$, the first sporadic complex reflection group in the Shephard-Todd classification.  This case is especially interesting because it is the only one other than wreath products for which the rational Cherednik algebra provides a quantization of a smooth symplectic variety (see \cite{Bel}). Our methods should be easily applicable to other two dimensional complex reflection groups.

The structure of $H_c(G_4)$ depends on a two-dimensional parameter $c = (c_1, c_2)$.  In an appropriately chosen coordinate system, we find that there are finite dimensional representations only on a periodic family of lines forming a wallpaper pattern of type $*632$.  For points on exactly one line in this pattern, there is a unique irreducible finite dimensional representation, and the category of finite dimensional representations is semi-simple.  This classification is given in Theorem \ref{lines}.  At intersections of the lines, the representations from each line may degenerate.  We find that there are three types of intersection points and that the structure of the finite dimensional irreducible representations depends on the type of the point. We give a complete classification of such representations in Theorem \ref{points}.

The remainder of this paper is structured as follows.  In Section 2, we give some basic definitions and preliminaries about the rational Cherednik algebra.  In Section 3, we state the main theorems.  In Section 4, we collect some results which will be necessary for the proofs.  In Section 5, we give proofs of the results.

\section{Background}

In this section, we define the rational Cherednik algebra and introduce some standard objects relating to it.  We then specialize to the case $W = G_4$ and give the specific notations we will use.

\subsection{The rational Cherednik algebra}

Let $W$ be a complex reflection group with reflection representation $V$.  Let $S$ be the set of complex reflections in $W$; for each $s \in S$, let $\alpha_s \in V^*$ and $\alpha_s^\vee \in V$ be eigenvectors with nontrivial eigenvalue, normalized so that $(\alpha_s, \alpha_s^\vee) = 2$.  Let $\lambda_s$ be the nontrivial eigenvalue of $s$ in $V^*$.  Take $c: S \to \CC$ to be a conjugation invariant function on the reflections.  Following \cite{E1}, we define the rational Cherednik algebra $H_c(W)$ associated to $W$ to be the quotient of
\[
\CC[W] \ltimes T(V \oplus V^*)
\]
by the relations
\[
[x,x'] = 0, \qquad [y,y'] = 0, \qquad [y,x] = (y, x) - \sum_{s \in S} c(s) (y, \alpha_s) (x, \alpha_s^\vee) s
\]
for $x,x' \in V^*$ and $y, y' \in V$.  Let $x_i, y_i$ be dual bases of $V^*$ and $V$, respectively. 

\subsection{Standard modules}

For every irreducible representation $\tau$ of $W$, we may define the standard module with lowest weight $\tau$ as
\[
M_c(\tau) = H_c(\tau) \underset{\CC[W] \ltimes  S V} \otimes \tau.
\]
It has a unique irreducible quotient $L_c(\tau) = M_c(\tau)/J_c(\tau)$, where $J_c(\tau)$ is the sum of all proper submodules in $M_c(\tau)$; it is known that $L_c(\tau)$ are exactly the simple objects in the category $\OO$ of representations of $H_c(W)$.  

\subsection{Weights and characters}
Define the element $\hh = \sum_i x_i y_i + \frac{\dim V}{2} - \sum_{s \in S} \frac{2 c_s}{1 - \lambda_s} s$ in $H_c(W)$.  This element is $W$-invariant and satisfies the properties $[\hh,x] = x$ and $[\hh,y] = -y$.  In \cite{GGOR}, a category $\OO$ of $H_c(W)$-modules was defined to consist of those $H_c(W)$-modules that are the direct sums of finite dimensional generalized eigenspaces of $\hh$ and on which the real part of the spectrum of $\hh$ is bounded below.  Let $\bar{\OO}$ be the category of $H_c(W)$-modules where instead the real part of the spectrum of $\hh$ is bounded above.  

For $M$ in $\OO$ or $\bar{\OO}$, the character $\chi_M(g, t)$ of a $H_c(W)$-module $M$ is defined to be $\Tr|_M(gt^{\hh})$ (as a series in $t$).  For instance, we have that 
\[
\chi_{M_c(\tau)}(g, t) = t^{h(\tau)}\frac{\chi_\tau(g)}{\det_{V^*}(1-gt)},
\]
where $h(\tau) = \frac{\dim V}{2} - \sum_{s \in S} \frac{2 c_s}{1 - \lambda_s} s|_\tau$ denotes the lowest eigenvalue of $\hh$ on $M_c(\tau)$.  

\subsection{Notations}
We specialize to the complex reflection group $W = G_4 = \langle a, b \mid a^3=b^3=1, aba = bab \rangle$ (see \cite{ST}).  Fix $a$ to have non-trivial eigenvalue $e^{\frac{4 \pi i}{3}}$ in the reflection representation.  There are two conjugacy classes $\langle a \rangle$ and $\langle a^2 \rangle$ of complex reflections; the parameters for $H_c(G_4)$ may therefore be taken to be $c_1 = c(a)$ and $c_2 = c(a^2)$.  Notice that $W$ is the finite subgroup of $SU(2)$ corresponding to the affine Dynkin diagram $\widetilde{E}_6$ in the McKay correspondence, so we may characterize its representations as follows. The irreducible representations of $W$ consist of the reflection representation $V$, its dual $V^*$, the embedding of $W$ into $SU(2)$, a three dimensional representation $\CC^3$, the trivial character $\CC$, and two characters $\CC_-$ and $\CC_+$ such that $V \otimes W = \CC^3 \oplus \CC_+$ and $V^* \otimes W = \CC^3 \oplus \CC_-$.  We will use the notations $\CC_0 = \CC$, $\CC_1 = \CC_+$, $\CC_2 = \CC_-$, $V_0 = W$, $V_1 = V$, and $V_2 = V^*$. 

It will be convenient for us to introduce new homogeneous parameters $a_0, a_1, a_2$ given by 
\[
a_0 = 2(\zeta c_1 + \zeta^{-1} c_2), \qquad a_1 = 2(\zeta^{-1}c_1 + \zeta c_2), \qquad \text{ and } a_2 = - 2(c_1 + c_2),
\]
where $\zeta = e^{\frac{2 \pi i}{6}}$. Here, we have $a_0 + a_1 + a_2 = 0$.  We will use $H_a(W)$ or $H_{a_0, a_1, a_2}(W)$ to denote the rational Cherednik algebra with the corresponding values of $(c_1, c_2)$ as parameters.  Similarly, we will write $M_a(\tau)$ and $L_a(\tau)$ for the standard module and its irreducible quotient.

\section{Results}

In this section, we describe the structure of finite dimensional representations of $H_a(W)$.  Away from a periodic family of lines, there are no finite dimensional representations.  For points on exactly one of these lines, there is only one finite dimensional irreducible representation, which is described in Theorem \ref{lines}.  At intersections of the lines, there can be more than one finite dimensional irreducible representation, and these representations are classified in Theorem \ref{points}.

\begin{theorem} \label{lines}
If $H_a(W)$ has a finite dimensional irreducible representation $M$, then the parameters $(a_0, a_1, a_2)$ lie, for some integer $m$, on one of the lines
\begin{itemize}
\item $a_i - a_{i-1} = 3m + 3/2$;

\item $a_i = 3m + 3/2, 3m + 5/2$,
\end{itemize}
where we take indices modulo $3$ and define the first group of lines to have type (1) and the second to have type (2).  At every point on these lines, there exists a finite dimensional representation with character given as follows.

\begin{table}[h!]
\begin{tabular}{|l|l|l|l|l|l|l|l|l} \hline
Type & Line & Character  \\ \hline %& LW & HW & LW & HW \\ \hline
(1)&$a_i - a_{i-1} = 3m + 3/2$ &  $\chi_{M_a(\CC_{-i})} + \chi_{M_a(V_{-i})} - \chi_{M_a(\CC^3)}$ \\ \hline 
(2)&$a_i = 3m + 3/2, 3m + 5/2$ & $\chi_{M_a(\CC_{-i})} + \chi_{M_a(\CC_{- i+2})} - \chi_{M_a(V_{-i+1})}$ \\ \hline 
\end{tabular}
\end{table}

For points on only one of the lines, this representation is the unique indecomposable finite dimensional representation (and is therefore irreducible).
\end{theorem}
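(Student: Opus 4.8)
The plan is to reduce the whole statement to two inputs: the lowest weights $h(\tau)$ of the seven standard modules, and the existence of singular vectors in them. First I would compute $h(\tau)$ for each irreducible $\CC_j,V_j$ ($j=0,1,2$) and $\CC^3$ directly from the formula for $\hh$; the homogeneous parameters $(a_0,a_1,a_2)$ are introduced precisely so that these come out affine-linear in the $a_i$, and so that every difference $h(\sigma)-h(\tau)$ is one of the expressions governing the lines. Any finite-dimensional irreducible lies in $\OO$ and is thus some $L_a(\tau)$, a proper quotient of $M_a(\tau)$; this forces $M_a(\tau)$ to contain a singular vector, say of $W$-type $\sigma$ in some positive degree $d$. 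Matching $\hh$-eigenvalues gives $h(\sigma)-h(\tau)=d\in\ZZ_{>0}$. Running over the finitely many pairs $(\sigma,\tau)$ and imposing this integrality produces exactly the two families of lines; conversely, away from all of them no $h$-difference is a positive integer, so no singular vector exists and $L_a(\tau)=M_a(\tau)$ is infinite-dimensional for every $\tau$. This settles the necessity statement.

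For existence and the character formulas I would, on each line, exhibit a two-term resolution by standard modules. In type (1) this is
\[
0 \longrightarrow M_a(\CC^3) \longrightarrow M_a(\CC_{-i}) \oplus M_a(V_{-i}) \longrightarrow M \longrightarrow 0,
\]
with the analogous complex $0\to M_a(V_{-i+1})\to M_a(\CC_{-i})\oplus M_a(\CC_{-i+2})\to M\to 0$ in type (2); the two components of the left-hand map are the singular-vector homomorphisms whose existence is guaranteed by the $h$-value computation on the line. Granting exactness, the character of $M$ is the Euler characteristic of the complex, which is exactly the stated alternating sum. The crucial check is that this virtual character is an honest polynomial in $t$: after clearing the common denominator $\det_{V^*}(1-gt)$, one verifies that the numerator $t^{h(\CC_{-i})}\chi_{\CC_{-i}}+t^{h(V_{-i})}\chi_{V_{-i}}-t^{h(\CC^3)}\chi_{\CC^3}$ is divisible by $\det_{V^*}(1-gt)$. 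This is a finite computation in the representation ring of $W$ using the branching relations recorded above (e.g.\ $V\otimes W=\CC^3\oplus\CC_+$), and the identity $\dim\CC_{-i}+\dim V_{-i}=\dim\CC^3$ ensures the leading growth cancels; it simultaneously confirms finite-dimensionality and computes $\dim M$.

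The main obstacle is establishing exactness of the complex, i.e.\ that the two singular-vector maps are jointly injective with the predicted cokernel and that no further singular vectors shrink $M$. I would handle this by working inside the explicit model $M_a(\tau)\cong SV^*\otimes\tau$ with the Dunkl action of the $y_i$: in each relevant degree the singular vectors are the solutions of an explicit linear system, and on a single line (away from intersections) the solution space has exactly the dimension predicted by the $h$-values, giving injectivity on the left and no hidden relations. Non-negativity of the resulting character then shows $M$ is a genuine module, and minimality of its lowest weight identifies it with the irreducible quotient $L_a(\tau_0)$ for the relevant $\tau_0$.

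Finally, for uniqueness on a single line I would show that the finite-dimensional objects of $\OO$ form a semisimple subcategory with a single simple object. The only $\tau$ for which $L_a(\tau)$ can be finite-dimensional are those admitting the singular-vector configuration found above, and on a single line exactly one such $\tau$ occurs, so there is a unique finite-dimensional simple $L$. To conclude that every indecomposable finite-dimensional module equals $L$, it suffices to show $\Ext^1(L,L)=0$ for finite-dimensional extensions; I would obtain this from the contravariant pairing of $L$ with the analogous module built from the mirror complex in $\bar{\OO}$, which is nondegenerate at a generic point of the line and therefore leaves no room for a nonsplit self-extension. Hence any indecomposable finite-dimensional representation is $L$, which is thus irreducible, as claimed.
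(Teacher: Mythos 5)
Your necessity argument has a genuine gap. You use only the condition that some difference $h(\sigma)-h(\tau)$ be a positive integer, which is necessary for $M_a(\tau)$ to contain a singular vector of type $\sigma$, but this is far weaker than what the theorem asserts, and running over all pairs it produces a strictly larger, denser family of lines than the claimed one. For instance, $h(\CC_0)-h(V_1)=-2a_0$, so every line $a_0=-n/2$ with $n\in\ZZ_{>0}$ passes your integrality test (e.g.\ $a_0=-1$, which is not of the form $3m+3/2$ or $3m+5/2$), and $h(\CC_i)-h(\CC^3)=\frac{4}{3}(a_{2-i}-a_{-i})$ admits all lines $a_{2-i}-a_{-i}=3n/4$, such as $a_2-a_0=3/4$, which again is not in the theorem's list. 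Two different phenomena are being conflated: reducibility of $M_a(\tau)$ (for which integrality is merely necessary, and for which singular vectors need not exist even when it holds) and finite-dimensionality of $L_a(\tau)$, which is much stronger. The paper's proof of necessity (Proposition \ref{unique}) uses finite-dimensionality in an essential way through Lemma \ref{poletrick}: the character of a finite-dimensional module is a finite sum of powers of $t$ and so has no poles, forcing $\sum_\sigma n_\sigma t^{h(\sigma)}\chi_\sigma(g)$ to vanish at both roots of $\det_{V^*}(1-gt)$ for every conjugacy class; the ideal of $4\times 4$ minors of the resulting matrix is then computed explicitly and factors into precisely the claimed lines. Without this constraint (or a substitute such as the Hecke-algebra restrictions of Lemmas \ref{mappings} and \ref{mappings2}), your approach cannot eliminate the spurious lines.

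The existence half is also flawed. Your proposed two-term resolution $0\to M_a(\CC^3)\to M_a(\CC_{-i})\oplus M_a(V_{-i})\to M\to 0$ has the wrong shape: on the line $a_i-a_{i-1}=3m+3/2$ one computes $h(\CC_{-i})=-4m-1$, $h(V_{-i})=2m+2$, $h(\CC^3)=1$, so one of $h(\CC^3)-h(\CC_{-i})$, $h(\CC^3)-h(V_{-i})$ is always negative; hence one component of your left-hand map vanishes identically, and the quotient then contains an entire standard module as a direct summand, so it can never be finite dimensional. (The correct homological picture, e.g.\ for $m\ge 0$, is a length-three resolution $0\to M_a(V_{-i})\to M_a(\CC^3)\to M_a(\CC_{-i})\to L\to 0$, consistent with the same Euler characteristic.) More importantly, you defer the existence of the singular vectors and exactness to "an explicit linear system in each relevant degree," but the relevant degree grows with $m$ and there are infinitely many lines, so no finite computation of this kind suffices. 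This is exactly where the paper needs real machinery: explicit computations on a few base lines ($m=0$ cases, via the contravariant form map $\nu_\tau$), Rouquier's rescaling result (Lemma \ref{rouquier}) with the "good/great point" construction to propagate finite-dimensionality to Zariski-dense subsets of all lines, and a semicontinuity/degeneration argument to extend to every point of every line. Your uniqueness paragraph inherits the gap in the necessity analysis, since the claim that "on a single line exactly one such $\tau$ occurs" is precisely what the integrality count cannot deliver; in the paper it follows from the one-dimensionality of the nullspace of the specialized matrix $A_f$, again a consequence of Lemma \ref{poletrick}.
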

Theorem \ref{lines} gives a complete classification of finite dimensional representations away from intersections of the lines.  It then remains to examine the behavior at these intersections.  In Figure \ref{fig-lines}, notice that the lines form a wallpaper pattern in the plane of type $*632$ (see \cite{Con} for an explanation of these patterns). Denote the type of an intersection point by the types of the lines on which it lies.  We then see that there are three types of intersections in the pattern, $(11)$, $(22)$, and $(1122)$.  Theorem \ref{points} gives a classification of the finite dimensional representations at these points.

\begin{figure}[h] 
\caption{Lines on which finite dimensional representations exist. \label{fig-lines}}
\includegraphics[width=5in]{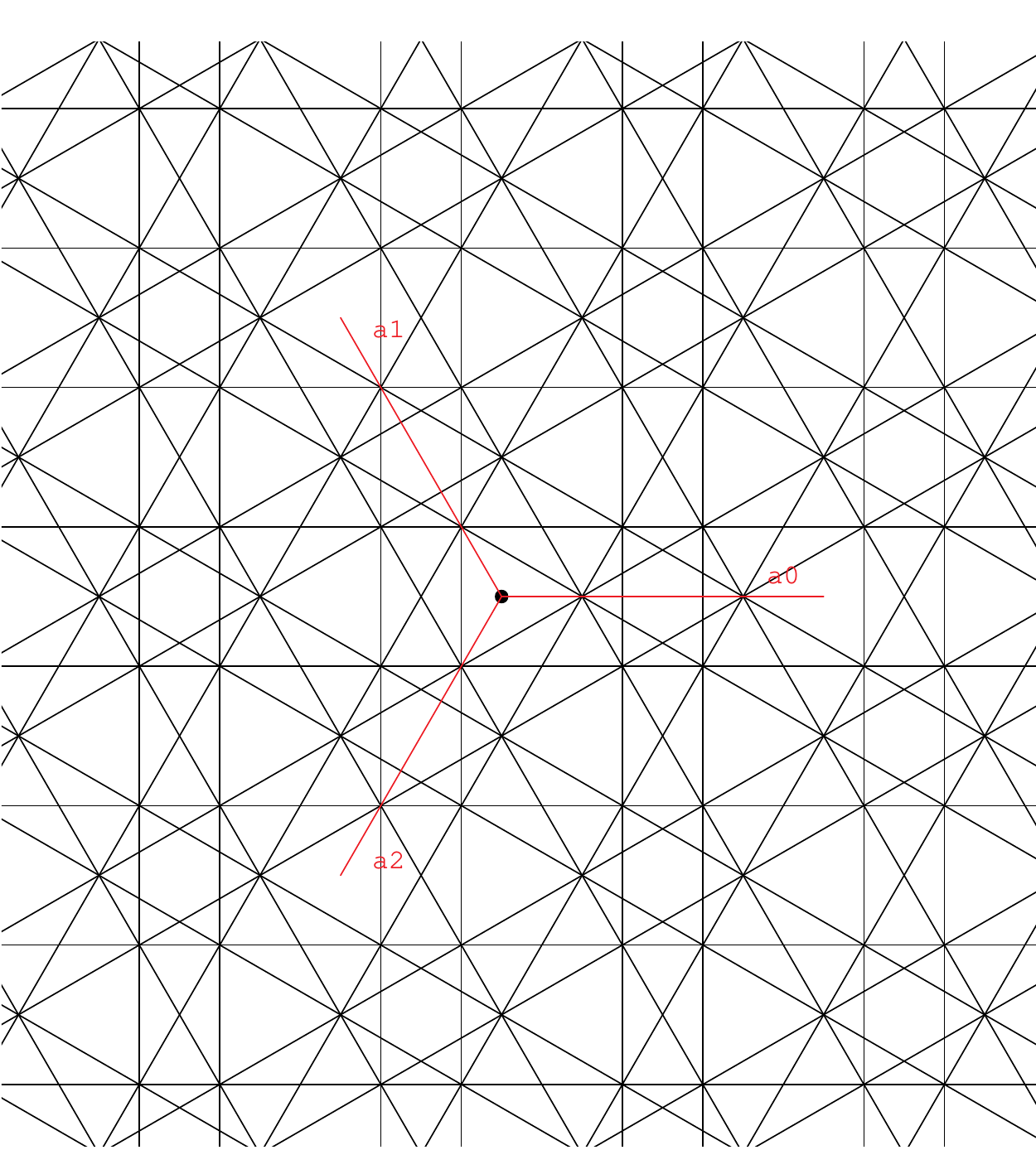}
\end{figure}

\begin{theorem} \label{points}
Points of type $(11)$ take the form 
\[
(a_i, a_{i+1}, a_{i+2}) = (m_i - m_{i+1}, m_i - 2 m_{i+1} + 3/2, -2 m_i - m_{i+1} - 3/2)
\]
for $m_i, m_{i+1} \not \equiv 0 \pmod{3}$.  At these points, there are exactly two finite dimensional irreducible representations, whose characters are given by the following table.

\begin{table}[h!]
\begin{tabular}{|l|l|} \hline
\multicolumn{1}{|c|}{Class of Points} & \multicolumn{1}{|c|}{Characters of Irreducible Representations} \\ \hline
\multirow{2}{*}{$m_i m_{i+1} > 0, m_i (m_i - m_{i+1}) > 0$} & 
$\chi_{M_a(\CC_{-i+2})} +\chi_{M_a(V_{-i+2})} - \chi_{M_a(\CC^3)}$\\
 & $\chi_{M_a(\CC_{-i})} - \chi_{M_a(\CC_{-i+2})} + \chi_{M_a(V_{-i})} - \chi_{M_a(V_{-i+2})}$ \\ \hline
 \multirow{2}{*}{$m_i m_{i+1} > 0, m_i (m_i - m_{i+1}) < 0$} & 
$\chi_{M_a(\CC_{-i})} +\chi_{M_a(V_{-i})} - \chi_{M_a(\CC^3)}$\\
 & $\chi_{M_a(\CC_{-i+2})}-\chi_{M_a(\CC_{-i})}  - \chi_{M_a(V_{-i})} + \chi_{M_a(V_{-i+2})}$ \\ \hline
\multirow{2}{*}{$m_i m_{i+1} < 0$} & $\chi_{M_a(\CC_{-i})} +\chi_{M_a(V_{-i})} - \chi_{M_a(\CC^3)}$  \\
 & $\chi_{M_a(\CC_{-i+2})} +\chi_{M_a(V_{-i+2})} - \chi_{M_a(\CC^3)}$  \\ \hline
\end{tabular}
\end{table}

Points of type $(22)$ take the form 
\[
(a_i, a_{i+1}, a_{i+2}) = (m_i + 1/2, m_{i+1} + 1/2, -m_i -m_{i+1} - 1)
\]
for $m_i, m_{i+1}, m_{i+1} - m_i \not \equiv 0 \pmod{3}$. At these points, there are exactly two finite dimensional irreducible representations, whose characters are given by the following table.

\begin{table}[h!]
\begin{tabular}{|l|l|} \hline
\multicolumn{1}{|c|}{Class of Points} & \multicolumn{1}{|c|}{Characters of Irreducible Representations} \\ \hline
\multirow{2}{*}{$m_i m_{i+1} > 0$} & 
$\chi_{M_a(\CC_{-i})} + \chi_{M_a(\CC_{-i+2})} - \chi_{M_a(V_{-i+1})}$\\
 & $\chi_{M_a(\CC_{-i+1})} + \chi_{M_a(\CC_{-i+2})} - \chi_{M_a(V_{-i})}$ \\ \hline
\multirow{2}{*}{$m_i m_{i+1} < 0$, $m_i(m_i + m_{i+1} + 1) > 0$} & $\chi_{M_a(\CC_{-i})} +\chi_{M_a(\CC_{-i+2})} - \chi_{M_a(V_{-i+1})}$ \\
 &  $\chi_{M_a(\CC_{-i+1})}  - \chi_{M_a(\CC_{-i})}- \chi_{M_a(V_{-i})} + \chi_{M_a(V_{-i+1})}$ \\ \hline
\multirow{2}{*}{$m_i m_{i+1} < 0$, $m_i(m_i+m_{i+1} + 1) < 0$} &  
$\chi_{M_a(\CC_{-i+1})} + \chi_{M_a(\CC_{-i+2})} - \chi_{M_a(V_{-i})}$\\
 & $\chi_{M_a(\CC_{-i})} - \chi_{M_a(\CC_{-i+1})} + \chi_{M_a(V_{-i})} - \chi_{M_a(V_{-i+1})}$ \\ \hline
\end{tabular}
\end{table}

Points of type $(1122)$ take the form 
\[
(a_i, a_{i+1}, a_{i+2}) = (m_i + 1/2, m_{i+1} + 1/2, -m_i - m_{i+1} - 1)
\]
for $m_i, m_{i+1} \not\equiv 0 \pmod{3}$ and $m_i \equiv m_{i+1} \pmod{3}$.  At these points, there are exactly three finite dimensional irreducible representations, whose characters are given in the following table.

\begin{table}[h!]
\begin{tabular}{|l|l|} \hline
\multicolumn{1}{|c|}{Class of Points} & \multicolumn{1}{|c|}{Characters of Irreducible Representations} \\ \hline
\multirow{3}{*}{$m_im_{i+1} > 0$} & 
$\chi_{M_a(\CC_{-i})} + \chi_{M_a(\CC_{-i+2})} - \chi_{M_a(V_{-i+1})}$\\
 & $\chi_{M_a(\CC_{-i+1})} + \chi_{M_a(\CC_{-i+2})} - \chi_{M_a(V_{-i})}$ \\
 & $\chi_{M_a(V_{-i})} - \chi_{M_a(\CC_{-i+2})}  + \chi_{M_a(V_{-i+1})} - \chi_{M_a(\CC^3)}$ \\ \hline
\multirow{3}{*}{$m_i m_{i+1} < 0$, $m_i (m_i + 2m_{i+1} + 3/2) > 0$} & $\chi_{M_a(\CC_{-i})} + \chi_{M_a(\CC_{-i+2})} - \chi_{M_a(V_{-i+1})}$ \\
&$\chi_{M_a(\CC_{-i+1})} + \chi_{M_a(V_{-i+1})} - \chi_{M_a(\CC^3)}$ \\
&$\chi_{M_a(\CC_{-i})} - \chi_{M_a(\CC_{-i+1})} + \chi_{M_a(V_{-i})} - \chi_{M_a(V_{-i+1})}$ \\ \hline
$m_i m_{i+1} < 0$, & $\chi_{M_a(\CC_{-i})} + \chi_{M_a(V_{-i})} - \chi_{M_a(\CC^3)}$ \\
$m_i (m_i + 2m_{i+1} + 3/2) < 0$,&$\chi_{M_a(\CC_{-i+1})} + \chi_{M_a(V_{-i+1})} - \chi_{M_a(\CC^3)}$ \\ 
$m_i(2m_i + m_{i+1} + 3/2) > 0$&$\chi_{M_a(\CC_{-i+2})} - \chi_{M_a(V_{-i})} - \chi_{M_a(V_{-i+1})} + \chi_{M_a(\CC^3)}$ \\ \hline
\multirow{3}{*}{$m_i m_{i+1} < 0$, $m_i (2m_i + m_{i+1} + 3/2) < 0$} &
$\chi_{M_a(\CC_{-i})} + \chi_{M_a(\CC_{-i+2})} - \chi_{M_a(V_{-i+1})}$ \\
& $\chi_{M_a(\CC_{-i})} + \chi_{M_a(V_{-i})} - \chi_{M_a(\CC^3)}$ \\
& $\chi_{M_a(\CC_{-i+1})} - \chi_{M_a(\CC_{-i})}  - \chi_{M_a(V_{-i})} + \chi_{M_a(V_{-i+1})}$ \\ \hline
\end{tabular}
\end{table}
\end{theorem}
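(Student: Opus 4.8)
The plan is to take Theorem~\ref{lines} as the starting point and analyze what happens when two of its lines cross. On a single line of type $(1)$ or $(2)$ we already have one finite dimensional module with an explicit three-term character; at an intersection two such degenerations occur at once, so a priori two modules are available. The task is to decide, in each case, whether these remain irreducible, whether one of them breaks into a difference of standard modules, and (for type $(1122)$) whether a third irreducible is created. I would do this by resolving the submodule structure of the standard modules appearing in the character formulas.

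First, I would write the lowest weights $h(\tau)$ of all seven irreducibles $\tau$ of $W$ as explicit affine-linear functions of $(a_0,a_1,a_2)$, using $h(\tau)=\tfrac{\dim V}{2}-\sum_{s}\tfrac{2c_s}{1-\lambda_s}s|_\tau$ together with the scalars by which the relevant central elements act on each $\tau$. The line equations in Theorem~\ref{lines} are exactly the loci where some difference $h(\sigma)-h(\tau)$ is a positive integer $d$ with $\sigma$ occurring in $S^dV^*\otimes\tau$, i.e. where a singular vector --- equivalently a nonzero morphism $M_a(\sigma)\to M_a(\tau)$ with $h(\sigma)>h(\tau)$ --- can appear. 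An intersection point is a simultaneous solution of two such integrality conditions, so each relevant standard module carries two such maps. Using the decompositions of $V^*\otimes\tau$ to constrain the isotypic components in which singular vectors can live, I would tabulate, separately for the types $(11)$, $(22)$, and $(1122)$, precisely which morphisms between standard modules are forced.

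Second, I would assemble these morphisms into the submodule lattices of the standard modules and read off the simple subquotients. Since the classes $[M_a(\tau)]$ form a basis of the Grothendieck group of category $\OO$ and the transition matrix to the classes $[L_a(\tau)]$ is unitriangular with respect to the ordering by $h(\tau)$, it is enough to determine the decomposition numbers at each point and invert the matrix; the alternating sums in the tables are exactly these inverted expressions. The sign conditions (for example $m_im_{i+1}>0$ versus $m_im_{i+1}<0$, together with the auxiliary inequalities) fix the relative order of the weights $h(\tau)$, hence the direction of each forced morphism and therefore which decomposition pattern occurs. I would treat each sign regime as a separate case. To confirm that the resulting irreducibles are genuinely finite dimensional, I would check that the corresponding alternating sum of standard characters specializes at $g=1$ to a polynomial in $t$, i.e. that its numerator is divisible by $\det_{V^*}(1-t)=(1-t)^2$; a module in $\OO$ is finite dimensional precisely when its graded character is a polynomial. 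The $\KZ$ functor and the restriction functors to the cyclic parabolic subgroups then identify the supports and control the composition multiplicities, which shows that the lists of two (resp.\ three) irreducibles are complete.

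The main obstacle will be the case analysis of the second step. On a single line the radical of the contravariant form on $M_a(\tau)$ is generated by one singular vector and the module structure is essentially rigid; at an intersection the two singular vectors interact, and one must determine whether the submodule they generate is their sum, whether one contains the other, or whether their overlap forces an extra composition factor. Disentangling these possibilities in each sign regime --- deciding when the two line-modules stay separate and irreducible, when one degenerates into a four-term difference of standards, and when a genuinely new irreducible (the modules whose characters carry a $-\chi_{M_a(\CC^3)}$ term) appears --- is the delicate core of the argument and is exactly what produces the branching into the rows of the tables.
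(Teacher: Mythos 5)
Your proposal has the right overall shape (case analysis over sign regimes, ordering of the weights $h(\tau)$, realizing the four-term characters as kernels or cokernels of maps relating the two line modules), but it is missing the two technical devices that make the paper's argument close. First, completeness: nothing in your outline establishes that there are \emph{exactly} two (resp.\ three) finite dimensional irreducibles at each intersection point. The paper gets this upper bound from Lemma \ref{poletrick}: the coefficient vector of any finite dimensional module in the basis $\{[M_a(\sigma)]\}$ must lie in the nullspace of the explicit matrix $A$ of Equation (\ref{matrix}), and a computation at each intersection point shows this nullspace is two- (resp.\ three-) dimensional, spanned precisely by the vectors whose characters appear in the tables. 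Your substitute --- that the $\KZ$ functor and parabolic restriction functors ``control the composition multiplicities'' and show the lists are complete --- is not an argument: no mechanism is given by which these functors produce the count, and this is exactly the hard quantitative step.

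Second, your plan to determine submodule lattices of standard modules from ``forced morphisms'' does not work as stated. The integrality condition $h(\sigma)-h(\tau)\in\ZZ_{>0}$ together with $\sigma$ occurring in $S^dV^*\otimes\tau$ is only a \emph{necessary} condition for a singular vector; deciding the existence or non-existence of actual maps $M_a(\sigma)\to M_a(\tau)$ requires computation. This is why the paper proves Lemma \ref{singular} (an explicit Dunkl-operator computation exhibiting the singular powers $P^k$) and Lemmas \ref{mappings} and \ref{mappings2} (which use the $\KZ$ functor and Corollary \ref{dimineq} to rule maps \emph{out}); these statements are invoked repeatedly in the case analysis to pin down the coefficient $k$ in expressions like $\chi_{N_1}-k\chi_{N_0}$, i.e., to decide exactly when a line module degenerates. (Your side claim that the lines of Theorem \ref{lines} are exactly the loci where such morphisms occur is also false: maps between standard modules exist on more lines than those carrying finite dimensional modules.) Moreover, composition factors of a standard module need not be detected by homomorphisms from other standard modules, so ``assembling morphisms into submodule lattices'' cannot by itself yield the decomposition numbers you propose to invert. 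Finally, checking that a virtual character is polynomial in $t$ does not show it is the character of an actual module; the modules with the four-term characters must be constructed, which the paper does as kernels and cokernels of maps between the line modules $S_0,S_1$ (resp.\ $R_0,R_1$) after proving those maps exist, together with grade-disjointness arguments (Lemma \ref{irred}) and the duality $\delta$ to transfer cases.
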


Notice that many of the irreducible representations given by Theorem \ref{points} are quotients of the representations constructed along lines in Theorem \ref{lines}; however, at some points, these representations can interact, giving irreducible representations with different lowest weight.

\section{Preliminaries for the Proofs}

In this section, we recall or prove a number of results which will be used in our proofs.

\subsection{Lowest Weights}

Note that the lowest eigenvalue of $\hh$ on $M_c(\tau)$ is given by $h(\tau)$; this means that, for any $W$-subrepresentation $\sigma \subset M_c(\tau)$, we must have that $h(\sigma) - h(\tau) \in \NN$.  Therefore, knowledge of the values of $h(\tau)$ given below will allow us to understand the possible maps between standard modules. 

\begin{lemma} \label{weights}
The lowest weights $h(\tau)$ of each representation are as follows.
\begin{table}[h!]
\begin{tabular}{|l|c|c|c|} \hline	
$\tau$ & $\CC_i$ & $V_i$ & $\CC^3$ \\ \hline
$h(\tau)$ & $4/3(a_{2-i} - a_{-i}) + 1$ & $2/3(a_{-i} - a_{2-i}) + 1$ & $1$\\ \hline
\end{tabular}
\end{table}
\end{lemma}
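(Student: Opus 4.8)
The plan is to reduce the statement to a single computation in the center of $\CC[W]$. Since $\dim V = 2$, the constant $\tfrac{\dim V}{2}$ equals $1$, so by definition $h(\tau) = 1 - z|_\tau$, where $z = \sum_{s \in S} \tfrac{2c_s}{1-\lambda_s}\, s$ and $z|_\tau$ denotes the scalar by which $z$ acts on $\tau$. Because $c$ and $\lambda_s$ are constant on conjugacy classes, $z$ is $W$-invariant, hence central in $\CC[W]$, so it does act by a scalar on each irreducible $\tau$. The first step is therefore to invoke the standard fact that a conjugacy-class sum $\sum_{g \in C} g$ acts on $\tau$ by $|C|\,\chi_\tau(g_C)/\dim\tau$, which reduces everything to knowing the sizes of the reflection classes, the eigenvalues $\lambda_s$, and the values $\chi_\tau(a),\chi_\tau(a^2)$.

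The second step is to assemble this group-theoretic data for $G_4$. The reflections split into the class of $a$ and the class of $a^2$, each of size $4$, and in $V^*$ their nontrivial eigenvalues are $\lambda_a = \omega$ and $\lambda_{a^2} = \omega^2$, where $\omega = e^{2\pi i/3} = \zeta^2$. I would then tabulate $\chi_\tau(a)$ and $\chi_\tau(a^2)$ for all seven irreducibles: for $V_1 = V$ the eigenvalues of $a$ are $1$ and $e^{4\pi i/3} = \omega^2$, so $\chi_{V_1}(a) = 1 + \omega^2 = -\omega$, and dualizing handles $V_2 = V^*$; for $V_0 = W \subset SU(2)$ an order-three element has eigenvalues $\omega,\omega^2$, giving $\chi_{V_0}(a) = -1$; and the tensor relation $V_0 \otimes V_0 = \CC^3 \oplus \CC_0$ then forces $\chi_{\CC^3}(a) = 0$. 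The one-dimensional characters are pinned down by $V_i \otimes V_0 = \CC^3 \oplus \CC_i$, which forces $\chi_{\CC_i}(a) = \omega^i$. The upshot is the uniform pattern $\chi_{\CC_i}(a) = \omega^i$, $\chi_{V_i}(a) = -\omega^i$, $\chi_{\CC^3}(a) = 0$, with the values at $a^2$ obtained by replacing $\omega$ with $\omega^{-1}$.

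The third step is to substitute and change coordinates. From $z|_\tau = \tfrac{2c_1}{1-\omega}\cdot\tfrac{4\chi_\tau(a)}{\dim\tau} + \tfrac{2c_2}{1-\omega^2}\cdot\tfrac{4\chi_\tau(a^2)}{\dim\tau}$ and $\tfrac1{1-\omega} = \tfrac{1-\omega^2}{3}$, the three cases collapse to one: since $\chi_{\CC_i}(a)/\dim\CC_i = \omega^i$ while $\chi_{V_i}(a)/\dim V_i = -\omega^i/2$, one reads off $z|_{\CC^3} = 0$ (so $h(\CC^3) = 1$) and $z|_{\CC_i} = -2\,z|_{V_i}$, with $z|_{V_i} = -\tfrac43\big[\omega^i(1-\omega^2)c_1 + \omega^{-i}(1-\omega)c_2\big]$. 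It then remains to rewrite this in the homogeneous coordinates $a_j$. Expanding $a_{2-i} - a_{-i}$ from the uniform formula $a_j = 2\big(\zeta^{1+4j}c_1 + \zeta^{-1-4j}c_2\big)$ and using $\zeta = -\omega^2$, $\zeta^{-1} = -\omega$, $\zeta^{4i} = \omega^{-i}$ (hence $1+\zeta = 1-\omega^2$ and $1+\zeta^{-1} = 1-\omega$) gives $a_{2-i} - a_{-i} = -2\big[\omega^i(1-\omega^2)c_1 + \omega^{-i}(1-\omega)c_2\big]$. Comparing, $z|_{V_i} = \tfrac23(a_{2-i} - a_{-i})$ and $z|_{\CC_i} = -\tfrac43(a_{2-i} - a_{-i})$, and $h(\tau) = 1 - z|_\tau$ produces exactly the three entries of the table.

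I expect the main obstacle to be the character computation of the second step, specifically fixing the labeling so that $\chi_{\CC_i}(a) = \omega^i$ is genuinely consistent with the paper's conventions $V_i \otimes V_0 = \CC^3 \oplus \CC_i$ and with the choice of eigenvalue $e^{4\pi i/3}$ for $a$; once the character values are correct, the collapse to a single case and the passage to the $a_j$-coordinates are routine, though the sixth-root-of-unity bookkeeping (where the index shifts and signs live) is the place where errors are most likely to creep in.
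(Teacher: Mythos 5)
Your proposal is correct and is exactly the paper's approach: the paper's proof is the one-line remark that the lemma ``follows from the computation of $h(\tau) = 1 - \sum_{s \in S} \frac{2 c_s}{1 - \lambda_s} s|_\tau$,'' and your argument simply carries out that computation in full (central element acting by a scalar via class sums, the character values $\chi_{\CC_i}(a)=\omega^i$, $\chi_{V_i}(a)=-\omega^i$, $\chi_{\CC^3}(a)=0$, and the passage to the coordinates $a_j$). I checked the details — the eigenvalue conventions in $V^*$, the class sizes, the identity $\frac{1}{1-\omega}=\frac{1-\omega^2}{3}$, and the uniform formula $a_j = 2(\zeta^{1+4j}c_1+\zeta^{-1-4j}c_2)$ — and they all come out right, reproducing the table.
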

\begin{proof}
This follows from the computation of $h(\tau) = 1 - \sum_{s \in S} \frac{2 c_s}{1 - \lambda_s} s|_\tau$.
\end{proof}

We will often need to compare complex weights $a, b$ which differ by a real number; for convenience, we will write $a > b$ to mean that $a - b$ is a positive real number.

\subsection{Characters}

We relate the characters of $L_a(\tau)$ and $M_a(\tau)$ by relating the corresponding elements of the Grothendieck group.  The following well-known lemma allows us to represent these using the classes of standard modules only.

\begin{lemma}[\cite{GGOR}]  \label{basis}
Let $K = K(\OO(H_a(W)))$ be the Grothendieck group of representations of $H_a(W)$ in category $\OO$.  Then the classes $[M_a(\sigma)]$ for $\sigma$ an irreducible representation of $W$ form a $\ZZ$-basis for $K$.
\end{lemma}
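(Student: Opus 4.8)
The plan is to exhibit a unitriangular change of basis between the classes $[M_a(\sigma)]$ and the classes $[L_a(\sigma)]$ of the simple modules, and then to invoke the standard fact that the simple classes form a basis of $K$. First I would record that $\OO$ is a finite-length abelian category: each object is a direct sum of finite-dimensional generalized $\hh$-eigenspaces with real part of the spectrum bounded below, so it is generated by finitely many lowest weight vectors and admits a finite composition series whose factors all lie among the simple objects $L_a(\sigma)$. Since the $L_a(\sigma)$, indexed by the finitely many irreducible representations $\sigma$ of $W$, are exactly the pairwise non-isomorphic simple objects of $\OO$, the classes $[L_a(\sigma)]$ form a $\ZZ$-basis of $K$ by the general theory of Grothendieck groups of finite-length categories. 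The task is thus reduced to expressing each $[M_a(\tau)]$ in this basis.

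The key step is to control the composition factors of $M_a(\tau)$. By the PBW theorem, $M_a(\tau) \cong SV^* \otimes \tau$ as a graded $W$-module, and since $[\hh, x] = x$ for $x \in V^*$, the element $\hh$ acts on the degree-$d$ component by the scalar $h(\tau) + d$. In particular $\hh$ acts semisimply, every submodule is graded, and the weight $h(\tau)$ is attained only in degree zero, where the $W$-module is exactly $\tau$. Thus the lowest weight space is $\tau$ with multiplicity one and survives in the top quotient $L_a(\tau)$, while the maximal proper submodule $J_a(\tau)$, being graded and not meeting the generating degree-zero part, is concentrated in strictly positive degree. Hence every composition factor $L_a(\sigma)$ with $\sigma \neq \tau$ arises inside $J_a(\tau)$ and has lowest weight $h(\sigma) = h(\tau) + d$ for some positive integer $d$, so $h(\sigma) > h(\tau)$. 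This yields, for non-negative integers $m_{\tau\sigma}$,
\[
[M_a(\tau)] = [L_a(\tau)] + \sum_{\sigma \,:\, h(\sigma) > h(\tau)} m_{\tau\sigma}\,[L_a(\sigma)].
\]

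Finally I would conclude by linear algebra. As there are only finitely many irreducible representations of $W$, I fix a total order on them for which $\tau$ precedes $\sigma$ whenever the real part of $h(\tau)$ is strictly smaller than that of $h(\sigma)$, with remaining ties broken arbitrarily. By the previous step every off-diagonal factor $L_a(\sigma)$ appearing in $M_a(\tau)$ satisfies $h(\sigma) > h(\tau)$ and hence has strictly larger real part, so the matrix expressing $\{[M_a(\tau)]\}$ in terms of $\{[L_a(\tau)]\}$ is unitriangular for this order and therefore invertible over $\ZZ$; consequently $\{[M_a(\sigma)]\}$ is also a $\ZZ$-basis of $K$. I expect the main obstacle to be the strictness $h(\sigma) > h(\tau)$ in the key step: one must rule out a second copy of $L_a(\tau)$ and, more generally, any composition factor at the weight $h(\tau)$. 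This is precisely what the grading computation showing that $\hh$ acts by $h(\tau) + d$ in degree $d$ provides, since the weight $h(\tau)$ then occurs only in degree zero; it also matches the observation from the preceding subsection that any $W$-subrepresentation $\sigma \subset M_a(\tau)$ has $h(\sigma) - h(\tau) \in \NN$, ensuring that every comparison that occurs is between weights differing by a real (indeed integer) amount, so that the ordering above is well defined.
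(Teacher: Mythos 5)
The paper gives no proof of this lemma at all---it simply cites \cite{GGOR}---and your unitriangular change-of-basis argument is precisely the standard proof found in that reference, so your approach coincides with the one the paper defers to. Your key step is correct: every composition factor $L_a(\sigma)$ of $M_a(\tau)$ other than the single top copy of $L_a(\tau)$ lies in $J_a(\tau)$, hence satisfies $h(\sigma) - h(\tau) \in \ZZ_{>0}$, which makes the decomposition matrix unitriangular for any total order refining the order by real part of $h$, and therefore invertible over $\ZZ$. One caveat: your justification of finite length via ``generated by finitely many lowest weight vectors'' is imprecise, since objects of $\OO$ need not be generated by singular vectors (costandard modules are counterexamples); the standard repair is to bound the length of any chain of submodules of $M$ by $\sum_\sigma \dim M[h(\sigma)]$, using that every nonzero subquotient in $\OO$ contains a singular vector of some $W$-type $\sigma$, on which $\hh$ acts by $h(\sigma)$.
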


By considering lowest weights, notice that $[M_a(\sigma)]$ can only appear in the decomposition of $[L_a(\tau)]$ if $h(\tau) > h(\sigma)$.  One of our tools for the main proofs will be the following lemma, which restricts the possibilities for the character of a finite dimensional irreducible $H_a(W)$-module $M$.

\begin{lemma} \label{poletrick}
Let $M$ be a finite dimensional irreducible $H_a(W)$-module with $[M] = \sum_\sigma n_\sigma [M_a(\sigma)]$ for some $n_\sigma$ by Lemma \ref{basis}.  Let $t_{g, 1}$ and $t_{g, 2}$ be the two (not necessarily distinct) values of $t$ for which $\det_{V^*}(1 - gt) = 0$.  Let $A$ be the matrix with columns indexed by irreducible representations $\sigma$ of $W$ and rows by ordered pairs of a conjugacy class $[g]$ in $W$ and an element $i$ in $\{1, 2\}$ such that $A_{(g, i), \sigma} = t_{g, i}^{h(\sigma)} \chi_\sigma(g)$.  Then, the vector $N$ with entries $n_\sigma$ is in the nullspace of $A$.
\end{lemma}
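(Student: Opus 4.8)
The plan is to exploit the fact that a finite-dimensional module has a character with no poles, whereas each standard-module character carries the denominator $\det_{V^*}(1-gt)$; forcing these poles to cancel will yield the nullspace equations. First I would use Lemma \ref{basis} together with the character formula for standard modules to write, for each $g \in W$,
\[
\chi_M(g,t) = \sum_\sigma n_\sigma \chi_{M_a(\sigma)}(g,t) = \frac{1}{\det_{V^*}(1-gt)} \sum_\sigma n_\sigma\, t^{h(\sigma)} \chi_\sigma(g),
\]
pulling out the common denominator, which is independent of $\sigma$. Write $P_g(t) = \sum_\sigma n_\sigma\, t^{h(\sigma)} \chi_\sigma(g)$ for the numerator.

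Next I would observe that, since $M$ is finite dimensional, $\chi_M(g,t) = \Tr|_M(g t^{\hh})$ is a finite $\CC$-linear combination of powers $t^\lambda$, one for each $\hh$-eigenvalue occurring on $M$, and therefore has no pole at any nonzero value of $t$. Because $g$ has finite order, its eigenvalues on $V^*$ are roots of unity, so the roots $t_{g,1}, t_{g,2}$ of $\det_{V^*}(1-gt)$ are nonzero. Regularity of $\chi_M(g,t)$ at these points then forces the numerator to vanish there, giving $P_g(t_{g,i}) = 0$ for $i \in \{1,2\}$. To make this vanishing unambiguous despite the possibly non-integer exponents $h(\sigma)$, I would note that every $\sigma$ with $n_\sigma \neq 0$ has $h(\sigma)$ lying in a single coset of $\ZZ$ (all these lowest weights are $\hh$-weights of the one module $M$, hence mutually congruent modulo $\ZZ$), so that factoring out a common power of $t$ renders each $P_g$ a genuine Laurent polynomial. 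Since $\chi_{M_a(\sigma)}(g,t)$ depends only on the conjugacy class of $g$, each equation depends only on the pair $([g],i)$, and unwinding $P_g(t_{g,i}) = \sum_\sigma t_{g,i}^{h(\sigma)} \chi_\sigma(g)\, n_\sigma = (AN)_{(g,i)}$ shows precisely that $AN = 0$, i.e.\ $N \in \ker A$.

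The only delicate point is the case $t_{g,1} = t_{g,2}$ (for instance $g = 1$, where $\det_{V^*}(1-t) = (1-t)^2$). There the pole is of order two, so finite-dimensionality in fact forces $P_g$ to vanish to second order, which is a strictly stronger condition than $P_g(t_{g,1}) = 0$. This causes no trouble for the statement as written: the two coinciding rows of $A$ merely repeat a single order-one equation, and the lemma asserts only the necessary condition $N \in \ker A$, which these order-one vanishings certainly imply. I expect this bookkeeping around repeated roots, rather than any substantive analytic difficulty, to be the main thing to handle with care; the pole-cancellation argument itself is otherwise immediate once the character is written over the common denominator.
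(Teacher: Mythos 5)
Your proposal is correct and takes essentially the same approach as the paper: decompose $[M]$ into classes of standard modules via Lemma \ref{basis}, pull out the common denominator $\det_{V^*}(1-gt)$, and use the fact that the character of a finite dimensional module is a finite sum of powers of $t$ (hence pole-free) to force the numerator to vanish at $t_{g,1}, t_{g,2}$, which is exactly the condition $AN = 0$. The additional care you take with non-integer exponents and with repeated roots goes beyond what the paper addresses, but does not change the argument.
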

\begin{proof}
Suppose $M$ has lowest weight $\tau$; this means that  $M = L_a(\tau)$ must be finite dimensional.  Note here that $n_{\tau} = 1$ because $L_a(\tau)$ is a lowest weight module with lowest weight $\tau$.  Therefore, we see that
\begin{equation} \label{char}
\chi_{L_a(\tau)}(g,t) = \sum_\sigma n_{\sigma} \chi_{M_a(\sigma)}(g,t) = \sum_{\sigma} \frac{n_{\sigma} t^{h(\sigma)} \chi_\sigma(g)}{\det_{V^*}(1 - gt)} = \frac{1}{\det_{V^*}(1 - gt)} \sum_\sigma n_{\sigma} t^{h(\sigma)} \chi_\sigma(g).
\end{equation}
For fixed $g$, viewed as a function of $t$, $\chi_{L_a(\tau)}(g, t)$ is a linear combination of rational powers of $t$; in particular, it has no poles.  Recalling the definitions of $t_{g,1}$ and $t_{g, 2}$, we find for all $g \in W$ that 
\[
\sum_\sigma n_{\sigma} t_{g, i}^{h(\sigma)} \chi_\sigma(g) = 0 \text{ for } i = 1, 2.
\]
Notice here that $\chi_\sigma(g)$, $t_{g, 1}$, and $t_{g, 2}$ are conjugation invariant functions of $g$, hence we obtain one constraint for each conjugacy class in $W$.  Taking $A$ to be the matrix with entries $A_{(g, i), \sigma} = t_{g, i}^{h(\sigma)} \chi_\sigma(g)$, this is exactly the desired conclusion.
\end{proof}

Let us now compute the matrix $A$ of Lemma \ref{poletrick} explicitly in our case.  Set $y = \zeta^{2a_0}$ and $w = \zeta^{2a_1}$. Using Magma to compute $t_{g, 1}$, $t_{g, 2}$, $h(\sigma)$ and $\chi_\sigma(g)$ for each $\sigma$ and $g$, the matrix $A$ is given by
\begin{equation} \label{matrix}
A = 
\left[\begin{matrix}
1& 1& 1& 2& 2& 2& 3\\ 
y^{-4}w^{-2}& y^{2}w^{4}& y^{2}w^{-2}& -2y^{2}w& -2y^{-1}w^{-2}& -2y^{-1}w& 3\\ 
1& \zeta^{4}& \zeta^{2}& \zeta^{3}& \zeta& \zeta^{5}& 0 \\
y^{\frac{-16 }{ 3 }}w^{\frac{ -8 }{ 3 }}& \zeta^{2}y^{\frac{ 8 }{ 3 }}w^{\frac{ 16 }{ 3 }}& \zeta^{4}y^{\frac{ 8 }{ 3 }}w^{\frac{ -8 }{ 3 }}& \zeta^{3}y^{\frac{ 8 }{ 3 }}w^{\frac{ 4 }{ 3 }}& \zeta^{5}y^{\frac{ -4 }{ 3 }}w^{\frac{ -8 }{ 3 }}& \zeta y^{\frac{ -4 }{ 3}}w^{\frac{ 4 }{ 3 }}& 0 \\
y^{-2}w^{-1}& y w^{2}& yw^{-1}& 0& 0& 0& -1\\ 
y^{\frac{ -20 }{ 3 }}w^{\frac{ -10 }{ 3 }}& \zeta^{4}y^{\frac{ 10 }{ 3 }}w^{\frac{ 20 }{ 3 }}& \zeta^{2}y^{\frac{ 10 }{ 3 }}w^{\frac{ -10 }{ 3}}& 
y^{\frac{ 10 }{ 3 }}w^{\frac{ 5 }{ 3 }}& \zeta^{4}y^{\frac{ -5 }{ 3 }}w^{\frac{ -10 }{ 3 }}& \zeta^{2}y^{\frac{ -5 }{ 3 }}w^{\frac{ 5 }{ 3 }}& 0 \\
y^{-4}w^{-2}& \zeta^{2}y^{2}w^{4}& \zeta^{4}y^{2}w^{-2}& y^{2}w& \zeta^{2}y^{-1}w^{-2}& \zeta^{4}y^{-1}w& 0 \\
1& 1& 1& 2& 2& 2& 3\\ 
y^{-4}w^{-2}& y^{2}w^{4}& y^{2}w^{-2}& -2y^{2}w& -2y^{-1}w^{-2}& -2y^{-1}w& 3\\ 
y^{\frac{ -8 }{ 3 }}w^{\frac{ -4 }{ 3 }}& \zeta^{4}y^{\frac{ 4 }{ 3 }}w^{\frac{ 8 }{ 3 }}& \zeta^{2}y^{\frac{ 4 }{ 3 }}w^{\frac{ -4 }{ 3}}& 
\zeta^{3}y^{\frac{ 4 }{ 3 }}w^{\frac{ 2 }{ 3 }}& \zeta y^{\frac{ -2 }{ 3 }}w^{\frac{ -4 }{ 3 }}& \zeta^{5}y^{\frac{ -2 }{ 3 }}w^{\frac{ 2 }{ 3 }}& 0 \\
1& \zeta^{2}& \zeta^{4}& \zeta^{3}& \zeta^{5}& \zeta& 0 \\
y^{-6}w^{-3}& y^{3}w^{6}& y^{3}w^{-3}& 0& 0& 0& -1\\ 
y^{-4}w^{-2}& \zeta^{4}y^{2}w^{4}& \zeta^{2}y^{2}w^{-2}& y^{2}w& \zeta^{4}y^{-1}w^{-2}& \zeta^{2}y^{-1}w& 0 \\
y^{\frac{ -4 }{ 3 }}w^{\frac{ -2 }{ 3 }}& \zeta^{2}y^{\frac{ 2 }{ 3 }}w^{\frac{ 4 }{ 3 }}& \zeta^{4}y^{\frac{ 2 }{ 3 }}w^{\frac{ -2 }{ 3 }}& y^{\frac{ 2 }{ 3 }}w^{\frac{ 1 }{ 3 }}& \zeta^{2}y^{\frac{ -1 }{ 3 }}w^{\frac{ -2 }{ 3 }}& \zeta^{4}y^{\frac{ -1 }{ 3 }}w^{\frac{ 1 }{ 3 }}& 0 \\
\end{matrix}\right].
\end{equation}

\subsection{Operations on representations}

In Lemmas \ref{flip} and \ref{rotate}, we introduce two operations which may be performed on $H_a(W)$-modules.  These operations will allow us to use the classification of $H_a(W)$-modules for some value of $a$ to classify $H_a(W)$ modules for a different value of $a$.

\begin{lemma} \label{flip}
There is a duality of categories $\delta: \OO(H_{a_0, a_1, a_2}(W)) \to \bar{\OO}(H_{a_1, a_0, a_2}(W))$ sending a $H_{a_0, a_1, a_2}(W)$-module $M$ to a $H_{a_1, a_0, a_2}(W)$-module $\delta(M)$ isomorphic to $M^*$ as a $W$-module.
\end{lemma}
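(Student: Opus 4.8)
The plan is to construct the duality $\delta$ explicitly via an antiautomorphism of the algebra that relates the two parameter values. First I would recall that for any associative algebra $H$, a $\CC$-linear antiautomorphism $\phi: H \to H'$ induces a contravariant functor from $H$-modules to $H'$-modules by taking $M \mapsto M^*$ (the full linear dual when $M$ is finite-dimensional, or the restricted/graded dual in category $\OO$) with the $H'$-action $(\phi(h) \cdot f)(v) = f(h \cdot v)$. So the core task is to exhibit an antiautomorphism $\phi: H_{a_0,a_1,a_2}(W) \to H_{a_1,a_0,a_2}(W)$ that acts appropriately on the grading element $\hh$ so that it swaps the roles of $V$ and $V^*$ (hence exchanges weights bounded below for weights bounded above) and on the group algebra $\CC[W]$ in a way matching the parameter swap $a_0 \leftrightarrow a_1$.

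The natural candidate is the composition of the standard transpose antiinvolution with the automorphism coming from complex conjugation of $W \subset SU(2)$, or equivalently an antiautomorphism sending $x \mapsto y$, $y \mapsto x$ (swapping $V^* \leftrightarrow V$) and $w \mapsto w^{-1}$ on $W$. I would check that under such a map the defining commutation relation $[y,x] = (y,x) - \sum_s c(s)(y,\alpha_s)(x,\alpha_s^\vee)s$ is sent to the corresponding relation for the new algebra. The key bookkeeping is that swapping $V$ and $V^*$ interchanges $\alpha_s$ with $\alpha_s^\vee$ and replaces the eigenvalue data so that the conjugacy classes $\langle a\rangle$ and $\langle a^2\rangle$ get exchanged; tracing this through the change of variables $a_0 = 2(\zeta c_1 + \zeta^{-1}c_2)$, $a_1 = 2(\zeta^{-1}c_1 + \zeta c_2)$ should show precisely that $(c_1, c_2)$ gets sent to the pair realizing $(a_1, a_0, a_2)$, i.e. it swaps $a_0$ and $a_1$ while fixing $a_2$. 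Since $\delta$ sends $x_i \mapsto y_i$ and vice versa, it sends $\hh = \sum_i x_i y_i + \dim V/2 - \sum_s \tfrac{2c_s}{1-\lambda_s}s$ essentially to $-\hh$ up to a $W$-invariant shift, which is exactly what flips "bounded below" to "bounded above" and makes $\delta$ land in $\bar{\OO}$.

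After establishing $\phi$, I would verify the functorial properties: $\delta$ is exact and contravariant, hence a duality (its square, or its composition with the analogous map back, recovers the identity up to natural isomorphism), and that $\delta(M) \cong M^*$ as a $W$-module follows immediately because $\phi$ restricts to the standard antiinvolution $w \mapsto w^{-1}$ on $\CC[W]$ which is precisely the $W$-structure on the contragredient. Finally I would confirm that $\delta$ maps category $\OO$ into $\bar{\OO}$: the dual of a module with $\hh$-spectrum bounded below and finite-dimensional generalized eigenspaces has $\hh$-spectrum bounded above with the same finite-dimensionality, which is the defining condition for $\bar{\OO}$.

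The main obstacle I expect is the careful verification that the antiautomorphism respects the deformed relation with the correct parameter change — specifically, confirming that exchanging $V \leftrightarrow V^*$ together with $w \mapsto w^{-1}$ produces exactly the parameter permutation $a_0 \leftrightarrow a_1$ (and not some other permutation or a sign discrepancy). This hinges on the normalization $(\alpha_s, \alpha_s^\vee) = 2$, on how the nontrivial eigenvalue $\lambda_s$ transforms under passing to $V$, and on the precise form of the linear substitution defining $a_0, a_1, a_2$ in terms of $c_1, c_2$; I would track the factors $\zeta = e^{2\pi i/6}$ through this computation to pin down the swap unambiguously.
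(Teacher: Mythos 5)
Your construction fails at its first step: the antiautomorphism you propose, sending $x \mapsto y$, $y \mapsto x$, $w \mapsto w^{-1}$, does not exist for $G_4$. Write the would-be map on $V^*$ as $\phi(x) = \iota(x)$ for a linear isomorphism $\iota: V^* \to V$. Applying an antihomomorphism with $\phi(w) = w^{-1}$ to the relation $w x w^{-1} = w\cdot x$ gives $w\,\iota(x)\,w^{-1} = \iota(w\cdot x)$, i.e.\ $\iota(w\cdot x) = w\cdot \iota(x)$, so $\iota$ must be a $W$-equivariant isomorphism $V^* \to V$. But for $G_4$ the reflection representation and its dual are non-isomorphic irreducible representations (the paper lists $V_1 = V$ and $V_2 = V^*$ as distinct irreducibles), so no such $\iota$ exists. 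The fallback you mention, complex conjugation coming from $W \subset SU(2)$, only produces a conjugate-linear map; it is not a $\CC$-linear antiautomorphism, and it would conjugate the complex parameters rather than permute them.

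Second, even if such a swap existed, your analysis of $\hh$ is backwards. For an antihomomorphism $\phi$ with $\phi(x_i) = y_i$ and $\phi(y_i) = x_i$ one gets $\phi\left(\sum_i x_i y_i\right) = \sum_i \phi(y_i)\phi(x_i) = \sum_i x_i y_i$, so $\phi$ preserves $\hh$ up to group-algebra terms; the induced duality would then carry $\OO$ to $\OO$ (this is the standard restricted dual $M \mapsto M^\vee$ used elsewhere in the paper, in the proof of Proposition \ref{exist}), not to $\bar{\OO}$ as the lemma requires. To land in $\bar{\OO}$ one should leave $V$ and $V^*$ alone: the paper takes $\phi(g) = g^{-1}$, $\phi(x) = x$, $\phi(y) = -y$, so that $\phi\left(\sum_i x_i y_i\right) = -\sum_i y_i x_i$, which equals $-\hh$ modulo group-algebra terms and flips ``bounded below'' to ``bounded above.'' The parameter exchange then comes not from interchanging $\alpha_s$ with $\alpha_s^\vee$ but from the inversion $s \mapsto s^{-1}$ in the reflection sum: inversion exchanges the two conjugacy classes of reflections of $G_4$, so $c(s) \mapsto c(s^{-1})$ swaps $c_1 \leftrightarrow c_2$, which under $a_0 = 2(\zeta c_1 + \zeta^{-1} c_2)$ and $a_1 = 2(\zeta^{-1} c_1 + \zeta c_2)$ is exactly the swap $a_0 \leftrightarrow a_1$ with $a_2$ fixed.
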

\begin{proof}
For any $H_{a_0, a_1, a_2}(W)$-module $M$, let $\delta(M) = M^*$.  Note that $M^*$ is naturally a $H_{a_0, a_1, a_2}(W)^\text{op}$ module, so it suffices to give a map $\phi: H_{a_1, a_0, a_2}(W) \to H_{a_0, a_1, a_2}(W)^\text{op}$.  We claim that $\phi(g) = g^{-1}$, $\phi(x) = x$ and $\phi(y) = -y$ for $g \in W$, $x \in V^*$, and $y \in V$ induces a valid map.  Indeed, all necessary relations are evident except for $[\phi(y), \phi(x)]^\text{op} = \phi([y,x])$.  To check this relation, let $(c_1, c_2)$ be the parameters corresponding to $(a_0, a_1, a_2)$.  Notice that $(c_1, c_2) = \left(\frac{1}{2} \frac{\zeta}{\zeta^4 - 1} (a_0 \zeta^2 - a_1), \frac{1}{2} \frac{\zeta}{\zeta^4 - 1} (a_1 \zeta^2 - a_0)\right)$, so by symmetry $(c_2, c_1)$ are the parameters corresponding to $(a_1, a_0, a_2)$.  Now, notice that 
\[
[\phi(y), \phi(x)]^\text{op} = [-y, x]^\text{op} (y, x) + \sum_{s \in S} c(s)(y, \alpha_s)(x, \alpha_s^\vee) s 
	= (y, x) - \sum_{s \in S} c(s^{-1}) (-y, \alpha_s)(x, \alpha_s^\vee) s^{-1} = \phi([y, x]),
\]
where the last equality holds because $c(s) = c_1$ implies $c(s^{-1}) = c_2$.  The resulting map $\delta = M \mapsto M^*$ is evidently natural and self-dual, completing the proof.
\end{proof}

\begin{lemma} \label{rotate}
There is an equivalence of categories $\omega: \OO(H_{a_0, a_1, a_2}(W)) \to \OO(H_{a_2, a_0, a_1}(W))$ sending a $H_{a_0, a_1, a_2}(W)$-module $M$ to a $H_{a_2, a_0, a_1}(W)$ module $\omega(M)$ isomorphic to the tensor product of $M$ and $\CC_2$ as a $W$-module.
\end{lemma}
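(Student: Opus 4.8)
The plan is to construct an explicit algebra homomorphism $\psi$ realizing the equivalence, analogous to the map $\phi$ in Lemma \ref{flip}. The functor $\omega$ should send $M$ to $M \otimes \CC_2$ as a $W$-module, so the natural thing is to exhibit an isomorphism $H_{a_2, a_0, a_1}(W) \to H_{a_0, a_1, a_2}(W)$ under which the $H_{a_0, a_1, a_2}(W)$-action on $M$, twisted by $\CC_2$, becomes an $H_{a_2, a_0, a_1}(W)$-action. First I would set $\psi(g) = g$ on $W$, and on the generators $x \in V^*$ and $y \in V$ I would rescale by appropriate scalars, writing $\psi(x) = \mu\, x$ and $\psi(y) = \nu\, y$ for constants $\mu, \nu$ to be determined (possibly $\mu\nu = 1$ to preserve the commutator on the right-hand side, or possibly a root of unity is needed to absorb the character twist). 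The key relations to verify are $[\psi(y), \psi(x)] = \psi([y,x])$ together with commutativity of the $x$'s and $y$'s; the latter are automatic under any rescaling, so the whole content sits in the deformed commutator.

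The central computation is to check that twisting by $\CC_2$ cyclically permutes the parameters $(a_0, a_1, a_2) \mapsto (a_2, a_0, a_1)$. The homogeneous parameters $a_i$ were defined precisely so that this tensoring operation acts as a cyclic shift: multiplication by the character $\CC_2 = \CC_-$ rescales the eigenvectors $\alpha_s, \alpha_s^\vee$ by the relevant eigenvalues $\lambda_s$, and this has the effect of relabeling the reflections within their conjugacy classes. Concretely I would track how the sum $\sum_{s \in S} c(s)(y,\alpha_s)(x,\alpha_s^\vee)\, s$ transforms when $M$ is tensored with $\CC_2$: the group element $s$ acts on $M \otimes \CC_2$ via its action on $M$ times the scalar $\CC_2(s)$, and I would show that absorbing this scalar into the coefficients is exactly the substitution taking the parameters for $(a_0,a_1,a_2)$ to those for $(a_2,a_0,a_1)$. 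Using the explicit linear relation between $(c_1, c_2)$ and $(a_0, a_1, a_2)$ recorded in the proof of Lemma \ref{flip}, this reduces to a finite character-value check on the two conjugacy classes $\langle a\rangle$ and $\langle a^2\rangle$.

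Once $\psi$ is shown to be a well-defined algebra isomorphism, the functor $\omega(M) = M \otimes \CC_2$ with the transported action is manifestly an equivalence, with inverse given by tensoring with $\CC_1 = \CC_2^{-1}$. It remains to confirm that $\omega$ preserves category $\OO$ rather than sending it to $\bar{\OO}$: since $\CC_2$ is one-dimensional, $\psi(\hh)$ differs from $\hh$ by at most an additive constant (the grading element $\hh$ is $W$-invariant, and tensoring by a character shifts its lowest eigenvalue but not the sign of the grading), so the spectrum of $\hh$ remains bounded below and $\OO$ is sent to $\OO$. I expect the main obstacle to be the bookkeeping in the central commutator relation: correctly matching the eigenvalue $\lambda_s$ normalization and the character values $\CC_2(a), \CC_2(a^2)$ so that the shift is exactly $(a_0, a_1, a_2) \mapsto (a_2, a_0, a_1)$ rather than some other permutation or an off-by-a-root-of-unity error. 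Everything else is formal once that identity is pinned down.
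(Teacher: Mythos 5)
Your proposed homomorphism cannot work as stated, and the failure is exactly at the step you yourself flag as carrying ``the whole content.'' If $\psi(g) = g$ for all $g \in W$ and $\psi(x) = \mu\, x$, $\psi(y) = \nu\, y$ for scalars $\mu,\nu$, then
\[
[\psi(y),\psi(x)] \;=\; \mu\nu\Bigl((y,x) - \sum_{s\in S} c(s)\,(y,\alpha_s)(x,\alpha_s^\vee)\, s\Bigr),
\qquad
\psi([y,x]) \;=\; (y,x) - \sum_{s\in S} c'(s)\,(y,\alpha_s)(x,\alpha_s^\vee)\, s,
\]
where $c$ and $c'$ are the parameter functions corresponding to $(a_2,a_0,a_1)$ and $(a_0,a_1,a_2)$ respectively. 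Since $1$ and the reflections $s$ are linearly independent in the group algebra, one can match coefficients: the $(y,x)$ terms force $\mu\nu = 1$ (the pairing is nondegenerate), and the remaining terms then force $c(s) = c'(s)$ for every reflection $s$ --- i.e.\ the parameters were not permuted at all. The structural reason is that a scalar rescaling of $V$ and $V^*$ together with the identity on $W$ treats the two conjugacy classes of reflections identically, so it can never induce a nontrivial permutation of $(a_0,a_1,a_2)$; there is no choice of $\mu,\nu$ ``to be determined.''

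The mechanism in your second paragraph --- absorbing the extra scalar $\chi_{\CC_2}(s)$ by which $s$ acts on $M \otimes \CC_2$ into the coefficient $c(s)$ --- is the correct one, but it is inconsistent with your declaration $\psi(g)=g$: realized as an algebra map it is precisely $\phi(g) = \chi_{\CC_2}(g)\, g$, $\phi(x) = x$, $\phi(y) = y$, which is what the paper uses. With that map,
\[
\phi([y,x]) \;=\; (y,x) - \sum_{s\in S} c(s)\,\chi_{\CC_2}(s)\,(y,\alpha_s)(x,\alpha_s^\vee)\, s,
\]
and since $\chi_{\CC_2}$ takes the values $\zeta^2$ and $\zeta^4$ on the two classes of reflections, the substitution $c(s) \mapsto \chi_{\CC_2}(s)\,c(s)$ is exactly the passage from the parameters for $(a_2,a_0,a_1)$ to those for $(a_0,a_1,a_2)$; no rescaling of $x$ or $y$ occurs, and the character twist lands on the group elements, not on $\alpha_s$ or $\alpha_s^\vee$ (tensoring by a character does not rescale those at all). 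Your closing remarks --- invertibility by tensoring with $\CC_1$ or iterating three times, and preservation of $\OO$ because the grading is untouched --- are fine once the map is corrected (indeed, since $\phi$ fixes $x$ and $y$, boundedness below of the $\hh$-spectrum is automatic), but as written your construction does not get off the ground.
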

\begin{proof}
Let $\chi_{\CC_i}$ be the character of $\CC_i$ as a $W$-representation.  We first claim that there is an isomorphism $\phi: H_{a_2, a_0, a_1}(W) \to H_{a_0, a_1, a_2}(W)$ given by $\phi(g) = \chi_{\CC_i}(g) \cdot g$, $\phi(x) = x$, and $\phi(y) = y$ for $g \in W, x \in V^*, y \in V$.   Note that all relations hold immediately except for $[\phi(y),\phi(x)] = \phi([y,x])$.  For this, let $(c_1, c_2)$ be the parameters corresponding to $(a_2, a_0, a_1)$; as in the proof of Lemma \ref{flip}, we have that $(c_1, c_2) = \left(\frac{1}{2} \frac{\zeta}{\zeta^4 - 1} (a_2 \zeta^2 - a_0), \frac{1}{2} \frac{\zeta}{\zeta^4 - 1} (a_0 \zeta^2 - a_2)\right)$ and $(a_2, a_0) = (2 \zeta c_1 + 2 \zeta^{-1} c_2, 2 \zeta^{-1} c_1 + 2 \zeta c_2)$.  Therefore, the parameters corresponding to $(a_0, a_1, a_2)$ are 
\[
(c_1', c_2') = \left(\frac{1}{2} \frac{\zeta}{\zeta^4 - 1}(-(a_2 + a_0) \zeta^2 - a_2), \frac{1}{2} \frac{\zeta}{\zeta^4 - 1}(a_2 \zeta^2 + a_2 + a_0)\right) = (\zeta^2 c_1, \zeta^4 c_2).
\]
Now, we may check that 
\[
\phi([y,x])= (y,x) - \sum_{s \in S} c(s)(y,\alpha_s)(x, \alpha_s^\vee) \chi_{\CC_2}(s) \cdot s = (y, x) - \sum_{s \in S} c'(s) (y, \alpha_s)(x, \alpha_s^\vee) s  =[\phi(y), \phi(x)],
\]
where we use the fact that $\chi_{\CC_2}$ takes values of $\zeta^2$ and $\zeta^4$ on the two conjugacy classes of reflections. Thus $\phi$ is well-defined; iterating it three times then shows that it is an isomorphism.

Hence, the category $\OO$ of each isomorphic algebra $H_{a_2, a_0, a_1}(W)$ and $H_{a_0, a_1, a_2}(W)$ is equivalent.  In particular, this equivalence sends a $H_{a_0, a_1, a_2}(W)$ module $M$ to a $H_{a_2, a_0, a_1}(W)$-module $\omega(M)$ isomorphic to $M$ as a vector space with action induced by $\phi$.  The $W$-action on $\omega(M)$ is given by $g \cdot m' = \chi_{\CC_2}(g) \cdot (g \cdot m')$; this is exactly the $W$ action on $M \otimes \CC_2$, so $\omega(M)$ is isomorphic to $M \otimes \CC_2$ as a $W$-module, completing the proof.
\end{proof}

Lemmas \ref{flip} and \ref{rotate} give functors between category $\OO(H_a(W))$ for different values of $a$.  Notice that there is a $S_3$-action on the plane compatible with these functors. The functor $M \mapsto M^*$ corresponds to the flip $(a_0, a_1, a_2) \mapsto (a_1, a_0, a_2)$, and $M \mapsto M \otimes \CC_2$ corresponds to the rotation $(a_0, a_1, a_2) \mapsto (a_2, a_0, a_1)$.

\subsection{Hecke algebras and the $\KZ$ functor}

We briefly review the construction of the $\KZ$ functor, which associates to a $H_a(W)$-module $M$ in category $\OO$ a module $\KZ(M)$ over the Hecke algebra $\HH(\{q_{H, j}\})$ of $W$.  This construction is given in more detail in \cite{BMR} and \cite{GGOR}.  We then use it to prove a lemma about mappings between standard modules.

\subsubsection{Definition of the Hecke algebra}
We first recall the definition of the Hecke algebra associated to a complex reflection group $W$.  For $s \in S$ a reflection in $W$, let $H_s \subset V$ be the hyperplane it preserves.  Now, let $\AAA = \{H_s \mid s \in S\}$ and $V_\text{reg} = V \setminus \bigcup_{s \in S} H_s$; notice that there is a $W$-action on $V_\text{reg}$.  Then, the Artin braid group $B_W$ associated to $W$ is given by $\pi_1(V_\text{reg}/W, x_0)$ for some $x_0 \in V_\text{reg}$.  

Now, let $\{q_{H,j}\}_{H \in \AAA/W, 0 \leq j < e_H}$, where $e_H$ is the order of some $s \in S$ preserving $H$, be a set of parameters such that $q_{H,j} = q_{H',j}$ if $H$ and $H'$ are in the same $W$-orbit.  Then, the Hecke algebra $\HH(\{q_{H,j}\})$ of $W$ with parameters $\{q_{H,j}\}$ is defined to be the quotient of $\CC[B_W]$ by the relations 
\[
(T_s - q_{H_s, 0})(T_s - q_{H_s, 1}) \cdots (T_s - q_{H_s, e_s - 1}) = 0.
\]
By a specialization of $\HH(\{q_{H, j}\})$ we mean a choice for the parameters $\{q_{H, j}\}$.  Notice that for the specialization $q_{H,j} = \det_V(s_H)^{-j}$ for $s_H$ a distinguished reflection about $H$, we have that $\HH(\{\det_V(s_H)^{-j}\}) = \CC[W]$.

In the case $W = G_4$, we notice that $W$ acts transitively on the set of reflecting hyperplanes $H_s$ in $V$, meaning that there is only one value of $q_{H, j} = q_j$ for each $j$.  Further, there are exactly two classes of reflections, with representative elements $s$ and $t$ of order $3$, and it is known (see \cite{BMR}) that $B_W$ has presentation
\[
B_W = \Big\langle T_s, T_t \mid T_s T_t T_s = T_t T_s T_t\Big\rangle.
\]
Therefore, the Hecke algebra has explicit presentation 
\[
\HH(\{q_j\}) = \Big\langle T_s, T_t \mid T_sT_tT_s = T_tT_sT_t, (T_r - q_0)(T_r - q_1)(T_r-q_2) = 0 \text{ for $r = s, t$}\Big\rangle.
\]

\subsubsection{The $\KZ$ functor}

We now describe the $\KZ$ functor.  For any $H_a(W)$ module $M$ that is finitely generated over $\CC[V]$, we may define its localization 
\[
M_\text{loc} = M \underset{\CC[V]} \otimes \CC[V_\text{reg}],
\]
which is naturally endowed with an action of $H_a(W)_\text{loc}$; here we recall that $V_\text{reg} = V \setminus \bigcup_{s \in S} H_s$.  Notice that there is an isomorphism $H_a(W)_\text{loc} \to W \ltimes \DD(V_\text{reg})$ which endows $M_\text{loc}$ with a structure of $W$-equivariant $\DD(V_\text{reg})$-module.  Because $M$ is finitely generated over $\CC[V]$, we may regard $M_\text{loc}$ as a vector bundle over $V_\text{reg}$ equipped with a flat connection $\nabla$ via the $\DD(V_\text{reg})$-module structure.  Taking horizontal sections of this connection gives a monodromy representation
\[
\rho_M: \pi_1(V_\text{reg}/W, x_0) \to GL(\CC^N)
\]
with $N$ the rank of $M_\text{loc}$.  This defines the Knizhnik-Zamolodchikov functor $\KZ(M) = \rho_M$ between $H_a(W)$-modules finitely generated over $\CC[V]$ and finite dimensional $B_W$-modules.  

Consider now the specialization $q_{H, j}^* = \det(s_H)^{-j} e^{2 \pi i k_{H,j}}$, where $k_{H, j}$ for $H \in \AAA/W$ is defined by $k_{H, 0} = 0$ and $c_{s_H} = -\frac{1}{2}\sum_{j = 0}^{e_H - 1} \det(s_H)^j (k_{H, j + 1} - k_{H, j})$ for all $H, j$.  In this case, it is known (see Theorem 4.12 in \cite{BMR}) that $\rho_M$ factors through $\HH(\{q_{H, j}^*\})$.  We will also denote the resulting representation of $\HH(\{q_{H, j}^*\})$ by $\KZ(M)$.  We primarily consider the effect of $\KZ$ on the standard modules $M_a(W)$.  By Tits' Deformation Theorem, we see that, at generic points, the simple modules over $\HH(\{q_{H, j}^*\})$ are in bijection with the irreducible representations of $W$; let $\widehat{\tau}$ denote the representation of $\HH(\{q_{H, j}^*\})$ corresponding to the representation $\tau$ of $W$.  It is well known (see \cite{GGOR} Section 5.3) that if $\widehat{\tau}$ is irreducible, then it is equal to $\KZ(M_a(\tau))$. We will use this fact in conjunction with the following two results.

\begin{theorem}[Lemma 2.10 in \cite{BEG2}] \label{KZinj}
Let $N$ be a $H_a(W)$-module torsion-free over $\CC[V]$.  Then, for any $M \in \OO$, the natural map
\[
\Hom_{H_a(W)}(M, N) \to \Hom_{\HH(\{q_{H,j}^*\})}(\KZ(M), \KZ(N))
\]
is injective.
\end{theorem}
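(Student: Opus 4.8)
The plan is to use the explicit realization of $\KZ$ as a monodromy functor together with the hypothesis that $N$ is torsion-free. First observe that a morphism $f \colon M \to N$ of $H_a(W)$-modules localizes functorially to a morphism $f_\text{loc} \colon M_\text{loc} \to N_\text{loc}$ of $W$-equivariant $\DD(V_\text{reg})$-modules, and that by construction $\KZ(f)$ is the map of monodromy representations induced by $f_\text{loc}$. Since $M \in \OO$ is finitely generated over $\CC[V]$ and $N$ is (finitely generated and) torsion-free over $\CC[V]$, both $M_\text{loc}$ and $N_\text{loc}$ are $\OO$-coherent $\DD(V_\text{reg})$-modules, hence finite-rank vector bundles on the smooth connected variety $V_\text{reg}$ equipped with an integrable connection, so that $\KZ$ applies to both. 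The central claim is then that the monodromy functor is faithful: if $\KZ(f) = 0$, then $f_\text{loc} = 0$.

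To see faithfulness, note that $f_\text{loc}$, being a bundle map commuting with the connections, is precisely a horizontal section of $\mathcal{H}om(M_\text{loc}, N_\text{loc})$ for the induced flat connection; on the connected base any horizontal section is determined by its value at the base point $x_0$, and that value is exactly $\KZ(f)$. Hence $\KZ(f) = 0$ forces $f_\text{loc} \equiv 0$. It remains to deduce $f = 0$, and here the torsion-free hypothesis enters. Writing $\iota_N \colon N \to N_\text{loc} = N \otimes_{\CC[V]} \CC[V_\text{reg}]$ for the localization map and recalling that $\CC[V_\text{reg}]$ is $\CC[V]$ with the product $\delta = \prod_{H \in \AAA} \alpha_H$ of hyperplane equations inverted, the kernel of $\iota_N$ is the $\delta$-torsion of $N$; since $N$ is torsion-free over the domain $\CC[V]$, $\iota_N$ is injective. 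Functoriality of localization gives the commuting relation $\iota_N \circ f = f_\text{loc} \circ \iota_M$, so $f_\text{loc} = 0$ yields $\iota_N \circ f = 0$, and injectivity of $\iota_N$ gives $f = 0$. This shows the natural map on Hom-spaces has trivial kernel.

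The main point to get right is the division of labor between the two ingredients: faithfulness of monodromy is a Riemann--Hilbert-type statement that holds independently of $N$ and relies only on $\OO$-coherence of the localizations and connectedness of $V_\text{reg}$, whereas torsion-freeness of $N$ is exactly the condition ensuring that $N \into N_\text{loc}$ loses no information, so that vanishing after localization propagates back to $N$. The only genuinely delicate step is the identification of $\KZ(f)$ with the fiber map $(f_\text{loc})_{x_0}$ and the verification that a parallel bundle homomorphism vanishing at one point vanishes globally; I would state these carefully but expect no serious obstacle, as they are standard facts about flat connections on a connected base.
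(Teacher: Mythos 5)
Your proof is correct, but note that the paper itself gives no argument for this statement: it is imported verbatim as Lemma 2.10 of \cite{BEG2}, so there is no internal proof to compare against. Your argument --- identify $\KZ(f)$ with the fiber of the horizontal bundle map $f_\text{loc}$ at $x_0$, use connectedness of $V_\text{reg}$ to conclude that a parallel homomorphism vanishing at one point vanishes identically, and then use torsion-freeness of $N$ to make the localization map $N \to N_\text{loc}$ injective so that $f_\text{loc}=0$ forces $f=0$ --- is essentially the standard proof from that reference, with the division of labor between the two hypotheses stated correctly. Your parenthetical insertion of finite generation of $N$ over $\CC[V]$ is also the right instinct, since without it $\KZ(N)$ is not even defined as a finite-dimensional monodromy representation.
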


\begin{corr} \label{dimineq}
For irreducible representations $\sigma$ and $\tau$ of $W$ such that $\widehat{\sigma}$ and $\widehat{\tau}$ are irreducible, we have 
\[
\dim \Hom_{H_a(W)}(M_a(\sigma), M_a(\tau)) \leq \dim \Hom_{\HH(\{q_{H, j}^*\})}(\widehat{\sigma}, \widehat{\tau}).
\]
\end{corr}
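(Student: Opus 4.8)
The plan is to derive the inequality directly from Theorem \ref{KZinj}, applied with $M = M_a(\sigma)$ and $N = M_a(\tau)$. First I would check that both standard modules meet the hypotheses required to feed them into that theorem. The module $M_a(\sigma)$ lies in category $\OO$ by construction, so it is a legitimate input for the argument $M$. For the target $N = M_a(\tau)$, I would recall that by the PBW theorem the standard module $M_a(\tau) = H_a(\tau) \otimes_{\CC[W] \ltimes SV} \tau$ is free over $\CC[V] = SV^*$: as a $\CC[V]$-module it is isomorphic to $\CC[V] \otimes \tau$, with $x \in V^*$ acting by multiplication. In particular $M_a(\tau)$ is torsion-free over $\CC[V]$ and finitely generated over it, so it qualifies as the torsion-free module $N$ to which Theorem \ref{KZinj} applies.

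With the hypotheses in place, Theorem \ref{KZinj} provides an injection of Hom-spaces
\[
\Hom_{H_a(W)}(M_a(\sigma), M_a(\tau)) \into \Hom_{\HH(\{q_{H,j}^*\})}(\KZ(M_a(\sigma)), \KZ(M_a(\tau))),
\]
and injectivity of this map immediately yields the corresponding inequality of dimensions. It then remains only to identify the two $\KZ$-images appearing on the right. This is exactly the statement recalled just before Theorem \ref{KZinj}: if $\widehat{\tau}$ is irreducible, then $\KZ(M_a(\tau)) = \widehat{\tau}$. Since by hypothesis both $\widehat{\sigma}$ and $\widehat{\tau}$ are irreducible, we have $\KZ(M_a(\sigma)) = \widehat{\sigma}$ and $\KZ(M_a(\tau)) = \widehat{\tau}$, and substituting these identifications into the right-hand Hom-space gives precisely $\dim \Hom_{H_a(W)}(M_a(\sigma), M_a(\tau)) \leq \dim \Hom_{\HH(\{q_{H, j}^*\})}(\widehat{\sigma}, \widehat{\tau})$.

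I expect this to be essentially a formal corollary rather than a substantive argument, so there is no serious obstacle to overcome. The one point meriting care — and hence the only mild obstacle — is the verification that $M_a(\tau)$ is torsion-free over $\CC[V]$, which is needed to license the use of Theorem \ref{KZinj}; this follows from freeness of the standard module via PBW. Once that is in hand, the conclusion is obtained purely by combining the injection of Hom-spaces with the identifications $\KZ(M_a(\sigma)) = \widehat{\sigma}$ and $\KZ(M_a(\tau)) = \widehat{\tau}$.
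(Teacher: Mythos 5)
Your proposal is correct and follows exactly the paper's own argument: apply Theorem \ref{KZinj} to $M = M_a(\sigma)$, $N = M_a(\tau)$ and use the identification $\KZ(M_a(\sigma)) = \widehat{\sigma}$, $\KZ(M_a(\tau)) = \widehat{\tau}$ granted by irreducibility. The only difference is that you explicitly verify the torsion-freeness hypothesis via the PBW freeness of $M_a(\tau)$ over $\CC[V]$, which the paper leaves implicit.
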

\begin{proof}
This follows from Theorem \ref{KZinj} and the fact that $\KZ(M_a(\sigma)) = \widehat{\sigma}$ if $\widehat{\sigma}$ is irreducible.
\end{proof}

Note that $\dim \Hom_{\HH(\{q_{H, j}^*\})}(\widehat{\sigma}, \widehat{\tau}) = 1$ if $\widehat{\sigma}$ and $\widehat{\tau}$ are isomorphic and $\dim \Hom_{\HH(\{q_{H, j}^*\})}(\widehat{\sigma}, \widehat{\tau}) = 0$ otherwise.

\subsubsection{Mappings between standard modules}

Specialize now to the case $W = G_4$. In this case, the parameters are $(k_1, k_2) = \left(a_2/3, -a_0/3\right)$.  Now, $W$ acts transitively on $\AAA$ and $e_H = 3$ for all $H$, so we may take the Hecke algebra parameters to be $q_0, q_1, q_2$ with $q_0 = 1$, $q_1 = e^{\frac{2 \pi i}{3}(1 + a_2)}$, and $q_2 = e^{-\frac{2 \pi i}{3}(1 + a_0)}$.  Fix a distinguished reflection $s = a$ in $W$ with $\det(s) = e^{\frac{4 \pi i}{3}}$.  Computing using CHEVIE 4 in GAP 3.4.4, we may find the matrices of the generators $a$ and $b$ in $\widehat{\tau}$.  These representations are listed in Table \ref{HeckeReps}.

\begin{center}
\begin{table}[h!]
\caption{Matrices for the representations of $\HH(\{q_{H, j}\})$ \label{HeckeReps}}
\begin{tabular}{|l|c|c|c|c|c|c|c|} \hline 
$\tau$ &  $\CC_0$ & $\CC_1$ & $\CC_2$ & $V_0$ & $V_1$ & $V_2$ & $\CC^3$\\ \hline
$\rho_{\widehat{\tau}}(a)$ & $1$ & $q_1$ & $q_2$ & $\left(\begin{matrix} q_1 & 0 \\ -q_1 & q_2 \end{matrix}\right)$ & $\left(\begin{matrix} 1 & 0 \\ -1 & q_2 \end{matrix}\right)$ & $\left(\begin{matrix} 1 & 0 \\ -1 & q_1 \end{matrix}\right)$ & $\left(\begin{matrix} q_2 & 0 & 0 \\ q_2 + q_1^2 & q_1 & 0 \\ q_1 & 1 & 1 \end{matrix}\right)$ \\ 
$\rho_{\widehat{\tau}}(b)$ & $1$ & $q_1$ & $q_2$ & $\left(\begin{matrix} q_2 & q_2 \\ 0 & q_1 \end{matrix}\right)$ & $\left(\begin{matrix} q_2 & q_2 \\ 0 & 1 \end{matrix}\right)$  & $\left(\begin{matrix} q_1 & q_1 \\ 0 & 1 \end{matrix}\right)$ & $\left(\begin{matrix} 1 & -1 & q_1 \\ 0 & q_1 & -q_2 - q_1^2 \\ 0 & 0 & q_2 \end{matrix}\right)$ \\ \hline
\end{tabular}
\end{table}
\end{center}

We now apply this machinery to determine when mappings between certain standard modules exist. 

\begin{lemma} \label{mappings}
For $a = (a_0, a_1, a_2)$, if we have
\[
\dim \Hom_{H_a(W)}(M_a(\CC_0), M_a(\CC_1)) > 0 \text{ or } \dim \Hom_{H_a(W)}(M_a(\CC_1), M_a(\CC_0)) > 0,
\]
then $a_2 = 3m + 2$ for some $m \in \ZZ$.
\end{lemma}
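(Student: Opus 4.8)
The plan is to transport the question from the Cherednik algebra to the Hecke algebra using the $\KZ$ functor, where everything reduces to a comparison of one-dimensional modules. Since $\CC_0$ and $\CC_1$ are one-dimensional $W$-representations, the associated Hecke modules $\widehat{\CC_0}$ and $\widehat{\CC_1}$ are automatically one-dimensional, hence irreducible, so Corollary \ref{dimineq} applies with $\sigma, \tau \in \{\CC_0, \CC_1\}$ in either order. Consequently, if either $\Hom_{H_a(W)}(M_a(\CC_0), M_a(\CC_1))$ or $\Hom_{H_a(W)}(M_a(\CC_1), M_a(\CC_0))$ is nonzero, then the corresponding space $\Hom_{\HH}(\widehat{\CC_0}, \widehat{\CC_1})$ or $\Hom_{\HH}(\widehat{\CC_1}, \widehat{\CC_0})$ is nonzero as well.

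Next I would use that a nonzero homomorphism between one-dimensional $\HH$-modules forces them to be isomorphic, so in either case we conclude $\widehat{\CC_0} \cong \widehat{\CC_1}$ as $\HH$-modules. Reading off the matrices in Table \ref{HeckeReps}, the generator $a$ (and likewise $b$) acts by the scalar $1$ on $\widehat{\CC_0}$ and by the scalar $q_1$ on $\widehat{\CC_1}$; since an isomorphism of one-dimensional representations is simply equality of these scalars, this isomorphism holds precisely when $q_1 = 1$. Recalling $q_1 = e^{\frac{2\pi i}{3}(1 + a_2)}$, the equation $q_1 = 1$ is equivalent to $(1 + a_2)/3 \in \ZZ$, which forces $a_2$ to be a real integer with $a_2 \equiv 2 \pmod 3$, i.e. $a_2 = 3m + 2$ for some $m \in \ZZ$. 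This is exactly the desired conclusion, and it is the point at which the whole argument is settled by an elementary root-of-unity computation.

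The main (mild) subtlety is recognizing that the required arithmetic constraint cannot come from weights alone and must be extracted from the $\KZ$/Hecke comparison. Indeed, the lowest-weight criterion of Lemma \ref{weights} only yields a coarse necessary condition: using $a_0 + a_1 + a_2 = 0$ one computes $h(\CC_0) - h(\CC_1) = 4a_2$, so a nonzero map $M_a(\CC_0) \to M_a(\CC_1)$ requires $4a_2 \in \NN$ and the reverse map requires $-4a_2 \in \NN$, giving at best $4a_2 \in \ZZ$ rather than the sharp congruence $a_2 \equiv 2 \pmod 3$. Thus the essential step is invoking Corollary \ref{dimineq}; once that is in hand, the remaining work is a direct reading of the scalars in Table \ref{HeckeReps}, and I expect no genuine obstacle beyond correctly matching indices in the formula for $q_1$.
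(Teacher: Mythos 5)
Your proof is correct and follows essentially the same route as the paper: both invoke Corollary \ref{dimineq} (via irreducibility of the one-dimensional Hecke modules $\widehat{\CC}_0$ and $\widehat{\CC}_1$) and then read off from Table \ref{HeckeReps} that a nonzero Hom forces $q_1 = e^{\frac{2\pi i}{3}(1+a_2)} = 1$, i.e.\ $a_2 = 3m+2$. Your additional observation that weight considerations alone would only give $4a_2 \in \ZZ$ is a nice sanity check, but the core argument is identical to the paper's.
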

\begin{proof}
By Table \ref{HeckeReps}, we see that $\widehat{\CC}_0$ and $\widehat{\CC}_1$ are always irreducible, so applying Corollary \ref{dimineq} we have
\[
\dim \Hom_{\HH(\{q_{H, j}^*\})}(\widehat{\CC}_0, \widehat{\CC}_1) > 0 \text{ or } \dim \Hom_{\HH(\{q_{H, j}^*\})}(\widehat{\CC}_1, \widehat{\CC}_0) > 0.
\]
Hence, there is a map between $\widehat{\CC}_0$ and $\widehat{\CC}_1$, so we have $q_1 = e^{\frac{2 \pi i}{3}(1 + a_2)} = 1$, implying that $a_2 = 3m + 2$ for some $m \in \ZZ$.
\end{proof}

\begin{lemma} \label{mappings2}
For $a = (a_0, a_1, a_2)$, we have
\begin{itemize}
\item[(i)] if
\[
\dim \Hom_{H_a(W)}(M_a(V_0), M_a(\CC_1)) > 0 \text{ or } \dim \Hom_{H_a(W)}(M_a(\CC_1), M_a(V_0)) > 0,
\]
then $a_1 = 3m + 3/2$ or $a_1 = 3m + 5/2$ for some $m \in \ZZ$;

\item[(ii)] if
\[
\dim \Hom_{H_a(W)}(M_a(V_1), M_a(\CC_0)) > 0 \text{ or } \dim \Hom_{H_a(W)}(M_a(\CC_0), M_a(V_1)) > 0,
\]
then $a_0 = 3m + 3/2$ or $a_1 = 3m + 5/2$ for some $m \in \ZZ$.
\end{itemize}
\end{lemma}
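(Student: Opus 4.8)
The plan is to follow the strategy of Lemma \ref{mappings}, adapted to the fact that $V_0$ and $V_1$ are two-dimensional. The key observation is that $\widehat{\CC}_0$ and $\widehat{\CC}_1$ are one-dimensional, hence always irreducible, so Corollary \ref{dimineq} applies as soon as the accompanying two-dimensional Hecke representation is also irreducible. For part (i), if $\widehat{V}_0$ were irreducible, then as a two-dimensional irreducible it cannot be isomorphic to the one-dimensional $\widehat{\CC}_1$, so $\dim \Hom_{\HH(\{q_{H,j}^*\})}(\widehat{V}_0, \widehat{\CC}_1) = \dim \Hom_{\HH(\{q_{H,j}^*\})}(\widehat{\CC}_1, \widehat{V}_0) = 0$; by Corollary \ref{dimineq} this would force both $\Hom_{H_a(W)}(M_a(V_0), M_a(\CC_1))$ and $\Hom_{H_a(W)}(M_a(\CC_1), M_a(V_0))$ to vanish, contradicting the hypothesis. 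Hence a nonzero map forces $\widehat{V}_0$ to be reducible, and the problem reduces to pinning down exactly when that occurs.

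To detect reducibility I would work directly with the matrices $\rho_{\widehat{V}_0}(a)$ and $\rho_{\widehat{V}_0}(b)$ of Table \ref{HeckeReps}. A two-dimensional representation is reducible exactly when its two generators share an invariant line. Since $\rho_{\widehat{V}_0}(a)$ is lower triangular with eigenvalues $q_1, q_2$ and $\rho_{\widehat{V}_0}(b)$ is upper triangular with eigenvalues $q_2, q_1$, I would write down the eigenlines of each and check when one eigenline of $a$ coincides with one of $b$. The only nondegenerate coincidence comes from matching the $q_1$-eigenlines $(q_2 - q_1, q_1)$ and $(q_2, q_1 - q_2)$, and it yields the condition $q_1^2 - q_1 q_2 + q_2^2 = 0$, i.e. $q_1/q_2$ is a primitive sixth root of unity $e^{\pm \pi i/3}$; the other pairings force $q_1 = 0$ or $q_2 = 0$, which never happens, and the case $q_1 = q_2$ is directly seen to be irreducible.

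It then remains to translate this into the stated condition. Using $q_1 = e^{\frac{2\pi i}{3}(1 + a_2)}$, $q_2 = e^{-\frac{2\pi i}{3}(1 + a_0)}$ and $a_0 + a_1 + a_2 = 0$ gives $q_1/q_2 = e^{\frac{2\pi i}{3}(2 - a_1)}$, so $q_1/q_2 = e^{\pm \pi i/3}$ becomes $2(2 - a_1) \equiv \pm 1 \pmod 6$, which is precisely $a_1 = 3m + 3/2$ or $a_1 = 3m + 5/2$. Part (ii) runs identically with $\widehat{V}_1$ and $\widehat{\CC}_0$ in place of $\widehat{V}_0$ and $\widehat{\CC}_1$: the matrices for $\widehat{V}_1$ are those for $\widehat{V}_0$ with $q_1$ set to $1$, so reducibility becomes $q_2^2 - q_2 + 1 = 0$, giving $q_2 = e^{\pm \pi i/3}$ and the analogous constraint on $a_0$.

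The work here is mostly bookkeeping rather than conceptual. The one point requiring care is that Corollary \ref{dimineq} needs \emph{both} Hecke representations to be irreducible, so the argument only yields the necessary direction (a nonzero Hom implies reducibility of the two-dimensional factor), not an equivalence; and one must separately dispatch the degenerate eigenvalue cases $q_1 = q_2$ and $q_j = 0$ when analyzing common invariant lines. Since only the forward implication is needed, these cases can be handled in a line each.
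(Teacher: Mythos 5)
Your proposal is correct and takes essentially the same route as the paper: both deduce from Corollary \ref{dimineq} (using that $\widehat{\CC}_1$, resp.\ $\widehat{\CC}_0$, is always irreducible) that a nonzero Hom between the standard modules forces $\widehat{V}_0$, resp.\ $\widehat{V}_1$, to be reducible, and then identify the parameter values where that reducibility occurs. The only difference is that the paper simply reads the reducibility locus off Table \ref{HeckeReps}, whereas you verify it explicitly via the common-eigenline computation $q_1^2 - q_1 q_2 + q_2^2 = 0$ — a welcome filling-in of detail, not a different argument.
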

\begin{proof}
For (i), by Table \ref{HeckeReps}, we see that $\widehat{V}_0$ is irreducible unless $a_1 = 3m + 3/2, 5/2$ for some $m \in \ZZ$.  But if $\widehat{V}_0$ is irreducible, then $\dim \Hom_{\HH(\{q_{H, j}^*\})}(\widehat{V}_0, \widehat{\CC}_0) = \dim \Hom_{\HH(\{q_{H, j}^*\})}(\widehat{\CC}_0, \widehat{V}_0) = 0$, so by Corollary \ref{dimineq}, we find that $\dim \Hom_{H_a(W)}(M_a(V_0), M_a(\CC_1)) = \dim \Hom_{H_a(W)}(M_a(\CC_1), M_a(V_0)) = 0$, a contradiction. So we must have $a_1 = 3m + 3/2, 5/2$ for some $m \in \ZZ$.  The proof of (ii) is exactly analogous.
\end{proof}

\section{Proofs}

In this section, we give proofs of the main theorems.  Our general approach will be as follows.  We first show that if $M$ is a finite dimensional irreducible representation of $H_a(W)$, then $a$ lies on a specified set of lines and the character of $M$ takes a specified form away from intersections of the lines.  We then use this characterization to construct representations away from intersection points.  Finally, we use the constructed representations to classify finite dimensional irreducible representations at intersections of the lines.

\subsection{Away from intersection points}

The structure of finite dimensional representations away from intersection points is given by Theorem \ref{lines}.  We divide its proof into Propositions \ref{unique} and \ref{exist}. 

\begin{prop} \label{unique}
If $H_a(W)$ has a finite dimensional irreducible representation $M$, then the parameters $(a_0, a_1, a_2)$ lie, for some integer $m$, on one of the lines
\begin{itemize}
\item $a_i - a_{i-1} = 3m + 3/2$;

\item $a_i = 3m + 3/2, 3m + 5/2$,
\end{itemize}
where indices are modulo $3$.  Further, for a point on only one of the lines, $M$ has the character listed below.

\begin{table}[h!]
\begin{tabular}{|l|l|l|l|l|l|l|l|l} \hline
Type & Line & Character ($\chi(g, t)$) \\ \hline 
(1)&$a_i - a_{i-1} = 3m + 3/2$ &  $\chi_{M_a(\CC_{-i})}(g,t) + \chi_{M_a(V_{-i})}(g,t) - \chi_{M_a(\CC^3)}(g,t)$ \\ \hline
(2)&$a_i = 3m + 3/2, 3m + 5/2$ & $\chi_{M_a(\CC_{-i})}(g, t) + \chi_{M_a(\CC_{-i+2})}(g, t) - \chi_{M_a(V_{-i+1})}(g, t)$ \\ \hline \end{tabular}
\end{table}
\end{prop}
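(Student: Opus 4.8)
The plan is to take Lemma~\ref{poletrick} as the main engine. Suppose $M$ is a finite dimensional irreducible $H_a(W)$-module with lowest weight $\tau$, so that $M = L_a(\tau)$ and, writing $[M] = \sum_\sigma n_\sigma [M_a(\sigma)]$, the vector $N = (n_\sigma)$ lies in the nullspace of the matrix $A$ of~\eqref{matrix}. Since $n_\tau = 1$ we have $N \neq 0$, so a necessary condition for a finite dimensional irreducible to exist is that $A$ be rank-deficient, i.e. $\operatorname{rank} A < 7$. The argument then splits into two parts matching the two assertions of the proposition: first I determine the locus in the $(a_0, a_1)$-plane on which $\operatorname{rank} A < 7$ and show it is exactly the union of the type~(1) and type~(2) lines; then, at a point lying on a single line, I compute the nullspace and identify its generator with the claimed character.

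For the first part I work in the variables $y = \zeta^{2a_0}$ and $w = \zeta^{2a_1}$ of~\eqref{matrix}; since all fractional exponents appearing have denominator $3$, setting $Y = y^{1/3}$ and $W = w^{1/3}$ turns every entry of $A$ into a monomial in $Y, W$ times a root of unity, so each $7 \times 7$ minor becomes a Laurent polynomial in $Y, W$. I would first exhibit one nonvanishing minor to see that $\operatorname{rank} A = 7$ generically, so that the degeneracy locus is the common zero set of the maximal minors; factoring these (the computational heart of the argument, carried out from the explicit entries of~\eqref{matrix} and the weights of Lemma~\ref{weights}) produces factors of the shape $Y^6 W^3 + 1 = y^2 w + 1$ and its cyclic variants, which detect the type~(1) lines, together with factors detecting $y \in \{\zeta^3, \zeta^5\}$ and cyclic variants, which detect the type~(2) lines. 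Translating back through $y = e^{2\pi i a_0 / 3}$, $w = e^{2\pi i a_1 / 3}$ and $a_0 + a_1 + a_2 = 0$, the vanishing of $y^2 w + 1$ reads $a_0 - a_2 = 2a_0 + a_1 \in 3\ZZ + 3/2$, a type~(1) line, while $y \in \{\zeta^3, \zeta^5\}$ reads $a_0 \in \{3m + 3/2, 3m + 5/2\}$, a type~(2) line. To keep the casework finite I would use the $S_3$-symmetry of Lemmas~\ref{flip} and~\ref{rotate}: since the flip and rotation functors carry finite dimensional modules to finite dimensional modules and act on parameters by $(a_0, a_1, a_2) \mapsto (a_1, a_0, a_2)$ and $(a_0, a_1, a_2) \mapsto (a_2, a_0, a_1)$, the degeneracy locus is $S_3$-invariant and it suffices to treat one representative of each orbit, the indices modulo $3$ following formally.

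For the second part, fix a point on exactly one line, say the type~(1) line $a_i - a_{i-1} = 3m + 3/2$, and substitute the corresponding relation on $(y, w)$ into $A$. I would check directly that the vector with $n_{\CC_{-i}} = 1$, $n_{V_{-i}} = 1$, $n_{\CC^3} = -1$ and all other entries $0$ lies in the nullspace, using~\eqref{matrix} together with the values of $h(\CC_{-i})$, $h(V_{-i})$, $h(\CC^3) = 1$ from Lemma~\ref{weights}; one then verifies that on a single line $\operatorname{rank} A = 6$ exactly, so this vector spans $\ker A$. Since $N = [M]$ also lies in this one-dimensional nullspace and has $n_\tau = 1$, it is a scalar multiple of the displayed vector, and the scalar is fixed to $1$ by comparing leading terms: Lemma~\ref{weights} gives $h(\CC_{-i}) = -4m - 1$ and $h(V_{-i}) = 2m + 2$, so the lowest of these (which is below $h(\CC^3) = 1$) occurs with coefficient $+1$, exactly as required for the lowest weight of a genuine module. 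This gives the type~(1) character; the type~(2) character follows identically, and the remaining indices follow from the $S_3$-symmetry. Where convenient, the reducibility of the relevant standard modules underlying these nullspace relations can be confirmed independently through the Hecke-algebra mapping criteria of Lemmas~\ref{mappings} and~\ref{mappings2}, which is in particular what cleanly produces the type~(2) condition with the value $3m + 1/2$ (that is, $y = \zeta$) correctly excluded.

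The main obstacle is the first part: proving that the common zero locus of the maximal minors of $A$ is \emph{exactly} the stated union of lines, with no spurious extra components and with the type~(2) condition emerging as $y \in \{\zeta^3, \zeta^5\}$ rather than the full set $y^3 = -1$. Because the non-central conjugacy classes contribute entries with fractional powers of $y, w$ mixed with sixth roots of unity, the minors are genuinely unwieldy, and the delicate point is to factor them transparently and to argue completeness of the list of lines. The pairing of rows coming from the repeated roots $t_{g,1} = t_{g,2}$ at central elements, together with the $S_3$-symmetry, should cut down this bookkeeping substantially, but it is here that the real work lies.
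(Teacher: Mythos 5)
Your proposal is correct and follows essentially the same route as the paper's proof: Lemma \ref{poletrick} forces rank-deficiency of $A$, a computer-algebra factorization of the ideal of maximal minors yields exactly the nine factors $(y-\zeta^3)(y-\zeta^5)\cdots(y^2w-\zeta^3)$ cutting out the stated lines (with $y=\zeta$ correctly absent), and the one-dimensional nullspace along each single line together with the weight formulas of Lemma \ref{weights} (the convexity argument fixing the sign $+1$ on the lowest weight) pins down the character. The only organizational differences are that the paper first eliminates $n_{V_0}, n_{V_1}, n_{V_2}$ using the constant rows of $A$, so it factors $4\times 4$ minors of a reduced $14\times 4$ matrix rather than $7\times 7$ minors of $A$ itself, and it verifies explicitly -- via an auxiliary ideal of $3\times 3$ minors whose vanishing locus is shown to consist only of intersection points -- that the nullspace remains one-dimensional at \emph{every} non-intersection point of each line rather than merely generically, a step your plan asserts (``$\operatorname{rank} A = 6$ exactly'') but should spell out.
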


\begin{prop} \label{exist}
If the parameters $(a_0, a_1, a_2)$ lie, for some integer $m$, on one of the lines $a_i - a_{i-1} = 3m + 3/2$, there exists a finite dimensional $H_a(W)$ representation with character $\chi_{M_a(\CC_{-i})}(g,t) + \chi_{M_a(V_{-i})}(g,t) - \chi_{M_a(\CC^3)}(g,t)$.  If they lie on one of the lines $a_i = 3m + 3/2, 3m + 5/2$, there exists a finite dimensional $H_a(W)$ representation with character $\chi_{M_a(\CC_{-i})}(g, t) + \chi_{M_a(\CC_{-i+2})}(g, t) - \chi_{M_a(V_{-i+1})}(g, t)$.
\end{prop}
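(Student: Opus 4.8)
The governing principle of the plan is that any module $N$ in category $\OO$ whose character $\chi_N(g,t)$ is, for each fixed $g$, a Laurent polynomial in $t$ (i.e.\ has no poles) is automatically finite dimensional: category $\OO$ consists of modules whose generalized $\hh$-eigenspaces are finite dimensional, and a pole-free character has only finitely many nonzero $t$-exponents. By the computation \eqref{char}, the candidate combination $\chi_{M_a(\CC_{-i})}+\chi_{M_a(V_{-i})}-\chi_{M_a(\CC^3)}$ (and its type-(2) analogue) is pole-free precisely because its multiplicity vector lies in the nullspace of the matrix $A$ of \eqref{matrix}, which is what pins these lines down. Hence it suffices to \emph{exhibit one module} in $\OO$ whose class in the Grothendieck group is the stated combination; its finite dimensionality is then automatic, and Lemma \ref{poletrick} confirms the character.

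First I would cut down the number of cases using Lemmas \ref{flip} and \ref{rotate}. The rotation $\omega$ cyclically permutes the three families of type-(1) lines (and, separately, the three families of type-(2) lines) while carrying the candidate characters to one another, so it is enough to treat one representative index $i$ of each type; the duality $\delta$, which preserves finite dimensionality and sends the integer $m$ labeling a fixed line to $-m-1$, then reduces to a single sign of $m$. For such a representative, Lemma \ref{weights} orders the three relevant lowest weights: on a type-(1) line (after the normalization of $m$) one has $h(\CC_{-i}) < h(\CC^3) < h(V_{-i})$, so $\CC_{-i}$ has minimal weight and the module sought is the lowest-weight module $L_a(\CC_{-i})$; the type-(2) case is identical with minimal weight the smaller of $h(\CC_{-i}),h(\CC_{-i+2})$.

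Next I would build a two-step resolution by standard modules. The ordering of weights shows the only possible nonzero maps run downward in $h$, assembling into a complex
\[
0 \to M_a(V_{-i}) \xrightarrow{f} M_a(\CC^3) \xrightarrow{g} M_a(\CC_{-i}) \to 0
\]
whose Euler characteristic $[M_a(\CC_{-i})]-[M_a(\CC^3)]+[M_a(V_{-i})]$ is exactly the candidate class. I would produce $f$ and $g$ by exhibiting the corresponding singular vectors --- a copy of $\CC^3$ in the radical of the contravariant form on $M_a(\CC_{-i})$, and a copy of $V_{-i}$ in the radical of that on $M_a(\CC^3)$, in the degrees $h(\CC^3)-h(\CC_{-i})$ and $h(V_{-i})-h(\CC^3)$ dictated by Lemma \ref{weights} --- the degeneration of the form occurring precisely on these lines. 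The reducibility of the Hecke-algebra representations recorded in Table \ref{HeckeReps}, read through Corollary \ref{dimineq} and Lemmas \ref{mappings} and \ref{mappings2}, both predicts where these maps exist and identifies the weights involved. Setting $L=\coker g$ yields a lowest-weight module with lowest weight $\CC_{-i}$, and once $f$ is shown injective with $\Imm f=\ker g$ one gets $[L]=[M_a(\CC_{-i})]-[M_a(\CC^3)]+[M_a(V_{-i})]$; the pole-free principle then makes $L$ finite dimensional, completing the type-(1) case (type (2) being handled by the identical scheme with the representations permuted as in Lemma \ref{weights}).

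The hard part is exactly this construction of genuine maps and the single exactness statement $\Imm f=\ker g$ --- that is, passing from ``the formal character lies in the nullspace of $A$'' to ``an honest finite-dimensional module realizes it,'' which is the content that Proposition \ref{unique} cannot supply. Concretely, the obstacle is to compute $\ker g$, equivalently the full radical of $M_a(\CC_{-i})$, and to identify it with the image of an injective $f$. I expect the cleanest control to come from the exact functor $\KZ$: its values on the standard modules are the explicit Hecke representations of Table \ref{HeckeReps}, so the reducibility of $\widehat{\CC^3}$ along the line detects the submodule generated by the singular vector and forces the identification $\ker g=\Imm f$, after which the bookkeeping of the previous paragraphs finishes the proof.
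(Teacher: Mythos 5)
Your overall skeleton --- reduce by $\omega$ and $\delta$, build a complex $0 \to M_a(V_{-i}) \xrightarrow{f} M_a(\CC^3) \xrightarrow{g} M_a(\CC_{-i}) \to 0$, set $L=\coker g$, and invoke the pole-free-character principle to get finite dimensionality --- is a legitimate strategy and genuinely different from the paper's (the paper never constructs a resolution: it verifies finite dimensionality of $L_a(\tau)$ on four base lines by an explicit Magma computation of the contravariant map $\nu_\tau\colon M_a(\tau)\to M_a(\tau)^\vee$, propagates it to Zariski-dense subsets of all lines via Rouquier's scaling Lemma \ref{rouquier} at ``great'' points, and then extends to entire lines by a semicontinuity/limit argument, obtaining the character by continuity). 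But your proposal has a genuine gap exactly where you flag ``the hard part,'' and the tool you nominate to close it cannot do so. First, existence of the maps $f$ and $g$: the only KZ statement available (Theorem \ref{KZinj}, Corollary \ref{dimineq}) is \emph{injectivity} of $\Hom_{H_a(W)}\to\Hom_{\HH}$, i.e.\ an upper bound on Hom spaces. It can rule maps out (that is how Lemmas \ref{mappings} and \ref{mappings2} are used) but can never produce a singular vector; moreover the identification $\KZ(M_a(\tau))=\widehat{\tau}$ is only asserted when $\widehat{\tau}$ is irreducible, and on the very lines you care about the relevant Hecke modules degenerate. Producing $f$ and $g$ requires an honest computation of singular vectors (compare the Dunkl-operator calculation in Lemma \ref{singular}), which your plan omits.

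Second, and more fatally, the exactness statement $\Imm f=\ker g$ cannot be ``forced'' by $\KZ$. The functor $\KZ$ is localization to $V_{\mathrm{reg}}$ followed by monodromy, so it annihilates every module supported on the reflection hyperplane arrangement. Consequently, exactness of $\KZ$ can at best give $(\Imm f)_{\mathrm{loc}}=(\ker g)_{\mathrm{loc}}$, i.e.\ $\ker g=\Imm f$ \emph{up to} a subquotient $T$ supported on $\bigcup_{s}H_s$. In rank $2$ such a $T$ need not be finite dimensional (supports of category $\OO$ modules can be all of $V$, the arrangement, or $\{0\}$), and if $T\neq 0$ then $[\coker g]$ equals the desired class \emph{plus} $[T]$, which destroys both the character identity and the finite-dimensionality conclusion --- your pole-free principle (which is itself correct) then simply does not apply. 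The same blind spot affects injectivity of $f$, which is not automatic for maps of standard modules over rational Cherednik algebras. Closing these holes would require either explicit commutative-algebra control of the syzygies of the singular copy of $\CC^3$ inside $M_a(\CC_{-i})\cong SV^*$, or a deformation/semicontinuity argument of the kind the paper actually uses; as written, the proposal does not contain a proof.
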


Before giving the proofs of Propositions \ref{unique} and \ref{exist}, we first discuss how they imply Theorem \ref{lines}. Proposition \ref{unique} shows that, if an irreducible finite dimensional representation $M$ exists, then the parameters must lie on one of the desired lines.  By Proposition \ref{exist}, the desired representations exist on these lines; away from intersections of the lines, Proposition \ref{unique} implies that it is unique, giving the result of Theorem \ref{lines}.  We now give proofs for Propositions \ref{unique} and \ref{exist}.

\begin{proof}[Proof of Proposition \ref{unique}]
We first show that the parameters lie on one of the claimed lines.  Suppose $M$ has lowest weight $\tau$ and hence that $M = L_a(\tau)$ is finite dimensional.  By Lemma \ref{basis}, we may write $[M] = [L_a(\tau)] = \sum_\sigma n_\sigma [M_a(\sigma)]$ for some integers $n_\sigma$.  Let $N = (n_{\CC_0}, n_{\CC_1}, n_{\CC_2}, n_{V_0}, n_{V_1}, n_{V_2}, n_{\CC^3})$.  Then, adopting the notations of Lemma \ref{poletrick}, there is a matrix $A$ given in Equation (\ref{matrix}) with $A_{(g,i), \sigma} = t_{g,i}^{h(\sigma)} \chi_\sigma(g)$ such that $N$ is in the nullspace of $A$.  As in Lemma \ref{poletrick}, write $n_1 = n_{\CC_0}$, $n_2 = n_{\CC_1}$, $n_3 = n_{\CC_2}$, $n_4 = n_{V_0}$, $n_5 = n_{V_1}$, $n_6 = n_{V_2}$, and $n_7 = n_{\CC^3}$.  From row 1 and the real and imaginary parts of row 3 in $A$, we find that
\begin{align*}
n_1 + n_2 + n_3 + 2 n_4 + 2 n_5 + 2 n_6 + 3 n_7 &= 0 \\
n_1 - \frac{1}{2} n_2 - \frac{1}{2} n_3 - n_4 + \frac{1}{2} n_5 + \frac{1}{2} n_6 &= 0 \\
- n_2 + n_3 + n_5 - n_6 &= 0.
\end{align*}
Solving, we have that 
\begin{align*}
n_4 = \frac{1}{2}[n_1 - n_2 - n_3 - n_7],
n_5 = \frac{1}{2}[-n_1 + n_2 - n_3 - n_7], 
n_6 = \frac{1}{2}[-n_1 - n_2 + n_3 - n_7].
\end{align*}
Substituting this into our previous equality, we see that 
\[
A X
\left[\begin{matrix} n_1 \\ n_2 \\ n_3 \\ n_7 \end{matrix}\right] = 0 \qquad \text{ for } \qquad X = \left[\begin{matrix}
1 & 0 & 0 & 0 \\
0 & 1 & 0 & 0 \\
0 & 0 & 1 & 0 \\
\frac{1}{2} & -\frac{1}{2} & -\frac{1}{2} & -\frac{1}{2} \\
-\frac{1}{2} & \frac{1}{2} & -\frac{1}{2} & -\frac{1}{2} \\
-\frac{1}{2} & -\frac{1}{2} & \frac{1}{2} & -\frac{1}{2} \\
0 & 0 & 0 & 1
\end{matrix}\right].
\]
This means that the nullspace of the $14 \times 4$ matrix $B = AX$ contains the non-zero integral point $(n_1, n_2, n_3, n_7)^T$; in particular it is non-empty, meaning that all $4 \times 4$ minors of $B$ must be zero.  For a $4$ element subset $J$ of the rows of $B$, let $B_J$ be the $4 \times 4$ minor of $N$ corresponding to the rows in $J$.  Take $I = \langle B_J \rangle$ to be the ideal in $\CC[y,w]$ generated by the numerators of all $4 \times 4$ minors $B_J$.  Computing in Magma, we find that $I$ is a principal ideal with single generator that factors as
\[
I = \Big\langle (w - \zeta^3)(w - \zeta^5)(y - \zeta^3)(y - \zeta^5)(yw - \zeta^3)(y w - \zeta)(y - \zeta^3 w)(y w^2 - \zeta^3)(y^2 w - \zeta^3)\Big\rangle.
\]
Since $B_J = 0$ for all $J$, the ideal $I$ must be trivial, which means that at least one of the factors must vanish.  Recalling that $y = \zeta^{2a_0}$ and $w = \zeta^{2a_1}$, this means that, for some $m \in \ZZ$, at least one of the conditions
\begin{align*}
2 a_0 &= 6m + 3, 6m + 5 \\
2 a_1 &= 6m + 3, 6m + 5 \\
2 a_2 &= 6m + 3, 6m + 5 \\
2(a_0 - a_2) &= 6m + 3\\
2(a_1 - a_0) &= 6m + 3\\
2(a_2 - a_1) &= 6m + 3
\end{align*}
must hold. These are exactly the equations of the desired lines, establishing the first claim.

We now claim that if the parameters are on only one of these lines, then $M = L_a(\tau)$ must have the claimed character.  Each linear factor $f$ in the generator for $I$ defines by substitution a specialization $A_f$ of $A$; let $B_f$ be the corresponding specialization of $B$.  Using Magma, we may compute a basis for the corresponding nullspace of $A_f$ as a matrix with entries either in $\CC(y)$ or in $\CC(w)$.  In each case, we find that the nullspace is one-dimensional with basis vector given by the following table.
\begin{center}
\begin{table}[h!]
\caption{Generators for the nullspace of $A_f$\label{Nsigma}}
\begin{tabular}{|l|c|c|c|c|c|c|c|} \hline 
 & $\CC_0$ & $\CC_1$ & $\CC_2$ & $V_0$ & $V_1$ & $V_2$ & $\CC^3$ \\ \hline
$a_0 = 3m + 3/2, 3m + 5/2$ & $1$ & $0$ & $1$ & $0$ & $-1$ & $0$ & $0$ \\ \hline
$a_1 = 3m + 3/2, 3m + 5/2$ & $0$ & $1$ & $1$ & $-1$ & $0$ & $0$ & $0$ \\ \hline
$a_2 = 3m + 3/2, 3m + 5/2$ & $1$ & $1$ & $0$ & $0$ & $0$ & $-1$ & $0$ \\ \hline
$a_0 - a_2 = 3m + 3/2$ & $1$ & $0$ & $0$ & $1$ & $0$ & $0$ & $-1$\\ \hline
$a_1 - a_0 = 3m + 3/2$ & $0$ & $0$ & $1$ & $0$ & $0$ & $1$ & $-1$ \\ \hline
$a_2 - a_1 = 3m + 3/2$ & $0$ & $1$ & $0$ & $0$ & $1$ & $0$ & $-1$\\ \hline
\end{tabular}
\end{table}
\end{center}
These nullspaces were computed in either $\CC(y)$ or $\CC(w)$, so for generic points along the lines the values of $N$ are given by the table .  For non-generic points, take $I_f$ to be the ideal in $\CC[y,w]$ generated by the numerators of all $3 \times 3$ minors in $B_f$.  For particular values of $y$ and $w$, if the evaluation map $\CC[y, w] \to \CC$ does not kill all of $I_f$, then the nullspace of $B_f$ and hence $A_f$ is one-dimensional, thus spanned by the vectors above.  Computing in Magma, we find that each $I_f$ is principal with single generator given in the table below.

\begin{table}[h!]
\[
\begin{array}{|l|l|} \hline
\text{Lines} & \text{Generator for $I_f$} \\ \hline
a_0 = 3m + 3/2 & (w - 1)^2(w - \zeta^3)^2(w - \zeta^5)(w - \zeta^4) \\ \hline
a_0 = 3m + 5/2 & (w- \zeta^3)(w - \zeta^2)^2(w - \zeta^5)^2(w - \zeta^4) \\\hline
a_1 = 3m + 3/2 & (y - 1)^2(y - \zeta^3)^2(y - \zeta^5)(y - \zeta^4) \\ \hline
a_1 = 3m + 5/2 & (y- \zeta^3)(y - \zeta^2)^2(y - \zeta^5)^2(y - \zeta^4) \\\hline
a_2 = 3m + 3/2 & (w - 1)^2 (w - \zeta^3)^2 (w - \zeta^5) (w - \zeta^4) \\ \hline
a_2 = 3m + 5/2 & (w - \zeta^3)(w - \zeta^2)^2(w - \zeta^5)^2(w - \zeta^4)\\ \hline
a_0 - a_2 = 3m + 3/2 & (y^2-1)^2(y^2-\zeta^2)(y^2-\zeta^4)^2 \\ \hline
a_1 - a_0 = 3m + 3/2 & (w^2-1)^2(w^2-\zeta^2)(w^2-\zeta^4)^2 \\ \hline
a_2 - a_1 = 3m + 3/2 & (w^2-1)^2(w^2-\zeta^2)(w^2-\zeta^4)^2 \\ \hline
\end{array}
\]
\end{table}

We may therefore compute the points on each line where $I_f$ vanishes; the resulting points are listed in the table below.

\begin{table}[h!]
\[
\begin{array}{|l|l|} \hline
\text{Lines} & \text{Points where $I_f$ vanishes} \\ \hline
a_i = 3m_i + 3/2 & a_{i+1} = 3m_{i+1} + \{0, 3/2, 2, 5/2\} \\ \hline
a_i = 3m_i + 5/2 & a_{i+1} = 3m_{i+1} + \{1, 3/2, 2, 5/2\} \\\hline
a_{i} - a_{i-1} = 3 m_{i-1} + 3/2 & a_{i} = 3m_{i} + \{0, 1/2, 1, 3/2, 2, 5/2\} \\ \hline
\end{array}
\]
\end{table}

These are all intersection points of the lines, meaning that $I_f$ is non-trivial for points lying only on a single line.  At such points, the nullspace is one-dimensional and generated by a vector with entries given in Table \ref{Nsigma}. This means that there is only one possible $\rho$ for which $L_a(\rho)$ is finite dimensional; in particular, it is the representation $\rho$ of lowest weight such that the corresponding entry is non-zero.  Thus, we may determine the lowest weight $\tau$ of $M$.  Further, because $n_\tau = 1$, this allows us to determine the only possible values of $N$. We will analyze these values for each set of lines separately.  

First consider the lines $a_i = 3m + 3/2$ and $a_i = 3m + 5/2$.  We have from Table \ref{Nsigma} that $[L_a(\tau)] = \sum_\sigma n_{\sigma} [M_a(\sigma)]$ with $n_{\CC_{-i}} = n_{\CC_{2 - i}} = e$, $n_{V_{1 - i}} = -e$, and all other $n_\sigma = 0$ for some $e \in \{+ 1, -1\}$.  Thus, we see that $\tau = \CC_{-i}$, $\tau = \CC_{2 - i}$, or $\tau = V_{1 - i}$.   Recall now that we computed 
\[
h(\CC_{-i}) = \frac{4}{3}(a_{i - 1} - a_i) + 1, \qquad h(\CC_{2 - i}) = \frac{4}{3}(a_i - a_{i + 1}) + 1, \qquad h(V_{1 - i}) = \frac{2}{3}(a_{i-1} - a_{i+1}) + 1.
\]
Notice that $h(V_{1 -i}) = \frac{1}{2} [h(\CC_{-i}) + h(\CC_{2 - i})]$, meaning that $V_{1 - i}$ cannot be the lowest weight unless $a_i = 0$, which does not occur in this case.  Hence, we conclude that $e = 1$.  Now, we have that 
\[
h(\CC_{-i}) < h(\CC_{2 -i}) \iff a_{i-1} + a_{i+1} < 2 a_i \iff 0 < a_i,
\]
meaning that $\tau = \CC_{-i}$ if $a_i > 0$ and $\tau = \CC_{2 - i}$ if $a_i < 0$.  By Equation (\ref{char}), we may then compute the character of $M = L_a(\tau)$ as
\[
\chi_{M}(g, t) = \chi_{M_a(\CC_{-i})}(g, t) + \chi_{M_a(\CC_{-i+2})}(g, t) - \chi_{M_a(V_{-i+1})}(g, t).
\]

Now consider the lines $a_{i} - a_{i-1} = 3m + 3/2$.  From Table \ref{Nsigma}, we may write $[L_a(\tau)] = \sum_\sigma n_{\sigma} [M_a(\sigma)]$ for $n_{\CC_{-i}} = n_{V_{-i}} = e$, $n_{\CC^3} = -e$, and all other $n_\sigma = 0$ for some $e \in \{+1, -1\}$.  Hence, we have that $\tau = \CC_{-i}$, $\tau = V_{-i}$ or $\tau = \CC^3$.  Notice that 
\[
h(\CC_{-i}) = \frac{4}{3}(a_{i-1} - a_{i}) + 1, \qquad h(V_{-i}) = \frac{2}{3}(a_{i} - a_{i-1}) + 1, \qquad h(\CC^3) = 1,
\]
meaning that $h(\CC^3) = \frac{1}{3} h(\CC_{- i}) + \frac{2}{3} h(V_{ - i})$, hence $\tau \neq \CC^3$.  This implies that $e = 1$.  Notice further that 
\[
h(\CC_{- i}) < h(V_{- i}) \iff  \frac{4}{3}(a_{i-1} - a_{i}) < \frac{2}{3}(a_{i} - a_{i-1}) \iff 2 (a_{i} - a_{i-1}) > 0,
\]
meaning that we must have $\tau = \CC_{ - i}$ if $2 (a_{i} - a_{i-1}) > 0$ and $\tau = V_{ - i}$ otherwise.  We may compute the character of $M$ in either case as
\[
\chi_{M}(g, t) = \chi_{M_a(\CC_{-i})}(g,t) + \chi_{M_a(V_{-i})}(g,t) - \chi_{M_a(\CC^3)}(g,t).
\]
Hence, in both cases, $M$ has the desired character, finishing the proof.
\end{proof}

To show that these representations exist, we use a result of Rouquier.  Adopting the notations of \cite{Rou}, we find that $(h_1, h_2) = (a_0/3, -a_2/3)$, $(x_0, x_1, x_2) = (1, e^{\frac{2 \pi i}{3}(a_0 - 1)}, e^{\frac{2 \pi i}{3}(1 - a_2)})$, and $K_0 = \QQ(e^{\frac{2 \pi i}{12}})$.  With these parameters, we have the following.
\begin{lemma}[Proposition 5.14 in \cite{Rou}] \label{rouquier}
Suppose all $x_i$'s are distinct with finite order.  Let $r \in \NN$ be such that there is an automorphism $\sigma$ of $K_0(\{x_i\})/K_0$ with $\sigma(x_i) = x_i^r$.  Then, we have
\[
\chi_{L_{rh}(\tau)}(g, t) = \frac{\det_{V^*}(1 - g t^r)}{\det_{V^*}(1 - gt)} \chi_{L_h(\tau)}(g, t^r).
\]
In particular, $L_{rh}(\tau)$ is finite dimensional if and only if $L_h(\tau)$ is finite dimensional.
\end{lemma}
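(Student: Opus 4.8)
The plan is to reduce the character identity to two ingredients and then assemble them. First, writing $[L_h(\tau)]=\sum_\sigma d_{\sigma\tau}[M_h(\sigma)]$ in the Grothendieck group via Lemma \ref{basis}, I would observe that the lowest weight $h(\sigma)$ is, by Lemma \ref{weights}, an affine-linear function of the parameters $a_i$ whose linear part is homogeneous of degree one; up to an overall grading shift absorbed by the normalization of $\hh$ in \cite{Rou}, rescaling $h\mapsto rh$ therefore sends $h(\sigma)\mapsto r\,h(\sigma)$. The crucial second ingredient is that the decomposition multiplicities $d_{\sigma\tau}=[L_h(\tau):M_h(\sigma)]$ are \emph{unchanged} when $h$ is replaced by $rh$. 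Granting both, the identity is pure bookkeeping: using $\chi_{M_h(\sigma)}(g,t)=t^{h(\sigma)}\chi_\sigma(g)/\det_{V^*}(1-gt)$ from Section 2, one substitutes $t\mapsto t^r$ (which turns $\det_{V^*}(1-gt)$ into $\det_{V^*}(1-gt^r)$ and $t^{h(\sigma)}$ into $t^{r h(\sigma)}$) and then multiplies by $\det_{V^*}(1-gt^r)/\det_{V^*}(1-gt)$ to restore the denominator, obtaining
\[
\chi_{L_{rh}(\tau)}(g,t)=\sum_\sigma d_{\sigma\tau}\,\frac{t^{r h(\sigma)}\chi_\sigma(g)}{\det_{V^*}(1-gt)}=\frac{\det_{V^*}(1-gt^r)}{\det_{V^*}(1-gt)}\,\chi_{L_h(\tau)}(g,t^r).
\]

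The main obstacle is the invariance of the decomposition numbers, and this is exactly where the hypotheses enter. The approach I would take is to invoke the theory of highest weight covers: by \cite{GGOR} and \cite{Rou}, category $\OO(H_h(W))$ is a highest weight cover of the Hecke algebra $\HH(\{x_i\})$ via $\KZ$, and its decomposition matrix is determined by the Hecke algebra together with the ordering of weights by $\hh$-eigenvalue. The hypothesis supplies a field automorphism $\sigma$ of $K_0(\{x_i\})/K_0$ with $\sigma(x_i)=x_i^r$; since the $x_i$ are precisely the roots governing the Hecke relations, applying $\sigma$ to matrix entries is a $K_0$-semilinear equivalence between $\HH(\{x_i\})$-modules and $\HH(\{x_i^r\})$-modules. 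Because $\sigma$ fixes $K_0$, which already indexes the simple Hecke modules by Tits' Deformation Theorem, it carries $\widehat{\tau}$ at parameters $\{x_i\}$ to $\widehat{\tau}$ at $\{x_i^r\}$, hence is an isomorphism of the combinatorial data. As $r>0$ preserves the relative order of lowest weights, the uniqueness of (suitably faithful) highest weight covers yields an equivalence $\OO(H_h(W))\simeq\OO(H_{rh}(W))$ matching $M_h(\sigma)\leftrightarrow M_{rh}(\sigma)$ and $L_h(\tau)\leftrightarrow L_{rh}(\tau)$, so $d_{\sigma\tau}(h)=d_{\sigma\tau}(rh)$. Making this identification of the two highest weight categories fully rigorous — in particular checking the faithfulness hypotheses needed to apply the uniqueness theorem — is the step I expect to require the most care.

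Finally, the finite-dimensionality statement would follow formally from the character identity. A module in $\OO$ is finite-dimensional precisely when its graded character $\chi(1,t)$ is a polynomial in $t$. Specializing $g=1$, so that $\det_{V^*}(1-t)=(1-t)^{\dim V}$, the identity reads
\[
\chi_{L_{rh}(\tau)}(1,t)=\left(\frac{1-t^r}{1-t}\right)^{\dim V}\chi_{L_h(\tau)}(1,t^r)=(1+t+\cdots+t^{r-1})^{\dim V}\,\chi_{L_h(\tau)}(1,t^r).
\]
Since $\chi_{L_h(\tau)}(1,t)$ has non-negative integer coefficients (it counts dimensions of graded pieces), the same holds for $\chi_{L_h(\tau)}(1,t^r)$, and multiplying by the non-negative polynomial $(1+\cdots+t^{r-1})^{\dim V}$ produces no cancellation. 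Hence $\chi_{L_{rh}(\tau)}(1,t)$ has finitely many nonzero terms if and only if $\chi_{L_h(\tau)}(1,t)$ does, which is exactly the assertion that $L_{rh}(\tau)$ is finite dimensional if and only if $L_h(\tau)$ is.
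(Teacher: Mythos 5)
You should first be aware that the paper contains no proof of Lemma \ref{rouquier} to compare against: it is quoted, with attribution, as Proposition 5.14 of \cite{Rou} and used as a black box in the proof of Proposition \ref{exist}. Your proposal therefore has to be judged against what a proof would actually require, i.e. against Rouquier's own argument, and in that light your architecture is the right one: the reduction of the character identity to invariance of the decomposition numbers $d_{\sigma\tau}$ under $h \mapsto rh$ is correct bookkeeping (modulo the normalization point you flag -- with the paper's $\hh$, which contains the constant $\dim V/2$, the displayed identity holds only up to an overall factor $t^{(1-r)\dim V/2}$, so the cited formula implicitly uses a shifted grading), and your positivity argument deducing the finite-dimensionality equivalence from the character identity is complete and correct.

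The genuine gap is that the invariance of the decomposition numbers is the entire content of the cited proposition, and your proposal invokes it rather than proves it. Concretely, three points are missing, and they are exactly where the lemma's hypotheses must do their work. (i) You need that the Hecke parameters attached to $rh$ are the Galois images $\{x_i^r\}$ of those attached to $h$; this is not automatic from scaling, because of the determinant twists in the dictionary $q_{H,j}^* = \det(s_H)^{-j}e^{2\pi i k_{H,j}}$, and it is precisely where the requirement that $\sigma$ fix $K_0$ enters. (ii) Your claim that $\sigma$ carries $\widehat{\tau}$ at $\{x_i\}$ to $\widehat{\tau}$ at $\{x_i^r\}$ presupposes a $\sigma$-equivariant labeling of the simple Hecke modules by irreducible $W$-representations at both parameter values; Tits' deformation theorem supplies such a labeling only when the Hecke algebras are split semisimple, and distinctness of the $x_i$ does not guarantee this for $G_4$ (the paper itself notes that $\widehat{V}_0$ is reducible on the type (2) lines, where the $x_i$ are still distinct). (iii) The uniqueness theorem for highest weight covers requires the $0$- and $1$-faithfulness of both covers together with compatibility of orders; you correctly observe the order compatibility (differences of lowest weights scale by $r>0$) but explicitly defer the faithfulness verification, and verifying it at the non-semisimple parameters that actually matter is the technical heart of \cite{Rou}, not a routine check. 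As it stands, your proposal is a faithful outline of the strategy behind the cited result, but it is incomplete precisely at the step the lemma exists to supply; filling in (i)--(iii) amounts to reproving Rouquier's proposition.
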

We now construct a family of points for which we may apply Lemma \ref{rouquier}.  First, notice that the $x_i$'s are distinct if $a_i \not \equiv 1 \pmod{3}$, so the first condition of Lemma \ref{rouquier} applies away from a discrete family of lines.   Now, if $x_i$ has finite order, denote this order by $o_i$.  Consider the set of points $(a_0, a_1, a_2)$ such that (1) $a_i \not\equiv 1 \pmod{3}$ for any $i$ and (2) $o_1$ and $o_2$ are finite; call these points \textit{good}.  If we have further that $\{\gcd(o_1, 12), \gcd(o_2, 12)\} = \{1, 2\}$, call the point \textit{great}.  Note here that any great point is good. 

At good points, we see that $K_0({x_i}) = Q(\eta)$ for some root of unity $\eta$.  In this case, any automorphism $K_0({x_i})/K_0$ sends $\eta$ to $\eta^s$ for some positive integer $s$ with $s \equiv 1 \pmod{12}$ (because $e^{\frac{2 \pi i}{12}}$ is sent to $e^{\frac{2 \pi i}{12} s}$).  Therefore, for $r$ such that there is some fixed $s \equiv 1 \pmod{12}$ with $x_i^r = x_i^s$ for all $i$, the hypothesis of Lemma \ref{rouquier} are satisfied.  At good points, we may simply choose $r = s$, so we may apply Lemma \ref{rouquier} for all $r \equiv 1 \pmod{12}$.  At great points, it suffices to choose $r$ so that $r \equiv s \pmod{o_1}$ and $r \equiv s \pmod{o_2}$ with $s \equiv 1 \pmod{12}$; because $\{\gcd(o_1, 12), \gcd(o_2, 12)\} = \{1, 2\}$, we may apply Lemma \ref{rouquier} for all sufficiently large odd $r$ in this case.  We are now ready to prove Proposition \ref{exist}.

\begin{proof}[Proof of Proposition \ref{exist}]
We first claim that $L_a(\CC_{-i})$ is finite dimensional on  $a_i - a_{i-1} = 3/2$, $L_a(V_{-i})$ is finite dimensional on $a_i - a_{i-1} = -3/2$, $L_a(\CC_{-i+2})$ is finite dimensional on $a_i = -1/2$, and $L_a(\CC_{-i})$ is finite dimensional on $a_i = 5/2$. Take $\widehat{M}_a(\tau) = \tau^* \underset{\CC[W] \ltimes SV^*} \otimes H_a(W)$ and let $M_a(\tau)^\vee = \widehat{M}_a(\tau)^*$ be its restricted dual.  Then, we obtain a natural map $\nu_\tau: M_a(\tau) \to M_a(\tau)^\vee$; by Lemma 11.6 of \cite{E1}, the image of this map is $L_a(\tau)$.  Notice that if $\nu_\tau$ vanishes in grade $d$, then it must vanish in all higher grades. Therefore, for a fixed $\tau$, $L_a(\tau)$ is finite dimensional with highest weight in grade at most $d-1$ if and only if $\Imm(\nu_\tau)$ is trivial in grade $d$. Computing the value of $\Imm(\nu_\tau)$ in each grade in Magma, we see that $\nu_{\CC_{-i}}$ vanishes in grade $2$ on $a_i - a_{i-1} = 3/2$, $\nu_{V_{-i}}$ vanishes in grade $2$ on $a_i - a_{i-1} = -3/2$, $\nu_{\CC_{-i+2}}$ vanishes in grade $1$ on $a_i = -1/2$, and $\nu_{\CC_{-i}}$ vanishes in grade $9$ on $a_i = 5/2$. This gives the claim.

We will use Lemma \ref{rouquier} to deduce the existence of more representations from the ones we have just found.  We first claim that great points are Zariski dense on the lines $a_1 - a_{0} = 3/2$, $a_1 - a_{0} = -3/2$, $a_1 = 5/2$, and $a_1 = -1/2$. This amounts to showing that there are infinitely many great points on each of the lines; we give explicit constructions of these parametrized by an odd prime $p \geq 7$ in Table \ref{great22}.
\begin{table}[h!]
\caption{Great points on the lines \label{great22}}
\begin{tabular}{|l|l|} \hline
Line & $(a_{0}, a_1, a_{2})$ \\ \hline
$a_1 - a_{0} = 3/2$ & $\left(- \frac{7p - 24}{2p}, \frac{5p - 12}{2p}, \frac{p-6}{p}\right)$ \\ 
$a_1 - a_{0} = -3/2$ & $\left(- \frac{p - 24}{2p}, -\frac{p + 12}{2p}, \frac{p-6}{p}\right)$ \\
$a_1 = 5/2$ & $\left(- \frac{7p - 12}{2p}, \frac{5}{2}, \frac{p-6}{p}\right)$ \\
$a_1 = -1/2$ & $\left(- \frac{p - 12}{2p}, -\frac{1}{2}, \frac{p-6}{p}\right)$ \\ \hline
\end{tabular}
\end{table}

We now claim that for each line $\ell$, $L_a(\tau)$ is finite dimensional on a Zariski dense subset of $\ell$.  We first do this for the lines $a_1 - a_{0} = 3m + 3/2$, $a_1 =3m + 3/2$, and $a_1 = 3m + 3/2$.  We consider each family of these lines in turn.

First, consider the lines $a_1 - a_{0} = 3m + 3/2$ for $m \geq 0$. For one of the constructed great points $a$ on the line $a_i - a_{i-1} = 3/2$, consider the corresponding point $a' = (2m + 1)a$ on the line $a_1 - a_{0} = 3m + 3/2$.  Notice that $a'$ has non-integral rational coordinates and hence is good for all but finitely many $a$.  Now, choose $r$ odd and sufficiently large so that $r = (2m + 1) r'$, where $r' \equiv 1 \pmod{12}$.  By Lemma \ref{rouquier} with parameter $r$, we see that $L_a(\CC_{2})$ is finite dimensional at the point $r \cdot a = r' \cdot a'$.  Applying Lemma \ref{rouquier} again with parameter $r'$, we find that $L_a(\CC_{2})$ is finite dimensional at $a'$, meaning that $L_a(\CC_{2})$ is finite dimensional on a Zariski dense subset of the line $a_1 - a_{0} = 3m + 3/2$.

Now, consider the lines $a_1 - a_{0} = -3m - 3/2$ for $m \geq 0$.  For any great point $a$ on the line $a_1 - a_{0} = -3/2$, consider $a'= (2m + 1)a$ on the line $a_1 - a_{0} = - 3m - 3/2$.  As before, $a'$ is good for all but finitely many $a$.   Now, choose $r$ odd and sufficiently large so that $r = (2m + 1) r'$, where $r' \equiv 1 \pmod{12}$.  By Lemma \ref{rouquier} with parameter $r$, we see that $L_a(V_{2})$ is finite dimensional at the point $r \cdot a = r' \cdot a'$.  Applying Lemma \ref{rouquier} again with parameter $r'$, we find that $L_a(V_{2})$ is finite dimensional at $a'$, meaning that $L_a(V_{2})$ is finite dimensional on a Zariski dense subset of the line $a_1 - a_{0} = -3m-3/2$.

Now, consider the lines $a_1 = 3m + 3/2, 3m + 5/2$ for $m \geq 0$. For any great point $a$ on the line $a_1 = 5/2$, consider $a' = \frac{k}{5}a$ lying on one of the lines $a_1 = 3m + 3/2$ or $a_1 = 3m + 5/2$ for some integer $k$.  Note again that $a'$ is good for all but finitely many $a$.  Choose $r$ odd and sufficiently large so that $5r = k r'$ with $r' \equiv 1 \pmod{12}$.  Applying Lemma \ref{rouquier} twice with parameters $r$ and $r'$, we see that $L_a(\CC_1)$ is finite dimensional at $ra = \frac{k}{5} r' a$ and hence $a'$.  This shows that $L_a(\CC_1)$ is finite dimensional on Zariski dense subsets of the lines $a_1 = 3m + 3/2$ and $a_1 = 3m + 5/2$.

Finally, consider the lines $a_1 = -3m -1/2, -3m - 3/2$ for $m \geq 0$. For any great point $a$ on the line $a_1 = -1/2$, consider $a' = ka$ lying on one of the lines $a_1 = -3m -1/2$ or $a_1 = -3m -3/2$ for some integer $k$.  Again, $a'$ is good for all but finitely many $a$.  Choose $r$ odd and sufficiently large so that $r = k r'$ with $r' \equiv 1 \pmod{12}$.  Applying Lemma \ref{rouquier} twice with parameters $r$ and $r'$, we see that $L_a(\CC_{2})$ is finite dimensional at $ra = \frac{k}{5} r' a$ and hence $a'$.  This shows that $L_a(\CC_2)$ is finite dimensional on a Zariski dense subset of the lines $a_1 = -3m - 1/2$ and $a_1 = 3m - 3/2$.

Now, recalling that the functor $\omega$ of Lemma \ref{rotate} is an equivalence of categories $\OO(H_{a_0, a_1, a_2}(W)) \to \OO(H_{a_2, a_0, a_1})(W)$, we may apply it to the previous families of lines to obtain the conclusion for all lines. Hence, we have now shown that for each line $\ell$, the corresponding representation $L_a(\tau)$ is finite dimensional on a Zariski dense set $\ell'' \subset \ell$. Let $\ell'$ be the set of points in $\ell''$ that lie only on a single line; notice that $\ell'$ is still Zariski dense in $\ell$. Recall that $L_a(\tau) = M_a(\tau)/ J_a(\tau)$, where $J_a(\tau)$ is the sum of all proper standard submodules of $M_a(\tau)$.  Now, for $a \in \ell'$, by Proposition \ref{unique}, the character of $L_a(\tau)$ is given by 
\[
\chi(g, t) = \chi_{M_a(\CC_{-i})}(g,t) + \chi_{M_a(V_{-i})}(g,t) - \chi_{M_a(\CC^3)}(g,t)
\]
on the lines $a_i - a_{i-1} = 3m + 3/2$ and by
\[
\chi(g,t) = \chi_{M_a(\CC_{-i})} + \chi_{M_a(\CC_{2-i})}(g,t) - \chi_{M_a(V_{1 - i})}(g,t)
\]
on the lines $a_i = 3m + 3/2, 3m + 5/2$.  In each case, $J_a(\tau)$ is the image of a single map $M_a(\sigma) \to M_a(\tau)$ for some representation $\sigma$ of $W$.  In particular, for each $a \in \ell'$, $M_a(\tau)$ must contain a unique singular copy of $\sigma$ in a fixed grade $d$, where the value of $d$ can be determined from the character of $L_a(\tau)$.

Suppose now that $M_a(\tau)$ has $k$ copies of $\sigma$ in grade $d$; let $X$ be the subspace of $M_a(\tau)$ spanned by these and fix an isomorphism $X \simeq \sigma^{\oplus k}$.  Write $\iota_i: \sigma \to X$ for the $i$th inclusion. Let $\sigma_{b_1, \ldots, b_k}$ be the image of the embedding $\iota_{b_1, \ldots, b_k}: \sigma \to X$ given by $v \mapsto \sum_i b_i \iota_i(v)$.  Fixing bases $\{y_1, y_2\}$ for $V^*$ and $\{v_i\}$ for $\sigma$, notice that $\sigma_{b_1, \ldots, b_k} \subset X$ consists of singular vectors if and only if $(b_1, \ldots, b_k)$ is in the nullspace of the map $\phi_a: \CC^k \to M_a(\tau)[n - 1]^{\oplus 2 \dim \sigma}$ given by
\[
(b_1, \ldots, b_k) \mapsto \left\{\sum_m b_m \cdot (y \cdot \iota_m(v_i))\right\}_{ij}.
\]
For $a \in \ell'$, there is a unique singular copy of $\sigma$ in grade $d$, so $\phi_a$ has nullspace of dimension $1$ on a Zariski dense set in $\ell$.  But the matrix entries of $\phi_a$ are polynomial in $a$, so generic semi-continuity implies that $\phi_a$ has nullspace of dimension $1$ on all but finitely many points $\{p_1, \ldots, p_n\}$ in $\ell$.  Hence, we may define a regular map $\psi:\ell - \{p_1, \ldots, p_n\} \to \PP^{k-1}$ sending $a$ to the nullspace of $\phi_a$; $\psi$ extends uniquely to some $\overline{\psi}$ on all of $\ell$.

By continuity, we see that $\sigma_{\overline{\psi}(a)}$ is singular for all $a \in \ell$, so we have a submodule $I_a(\tau)$ generated by $\sigma_{\overline{\psi}(a)}$. For generic $a$, $I_a(\tau)[t]$ is the image of the map $\rho_{a, t}: SV^*[t - d]^{\oplus \dim \sigma} \to M_a(\tau)[t]$ given by 
\[
\rho_{a, t}(g_1, \ldots, g_{\dim \sigma}) = \sum_i g_i \iota_{\psi(a)}(v_i).
\]
The matrix entries of $\rho_{a,t}$ are polynomial in $c$, so by generic semi-continuity, for any $a^* \in \ell'$, $\dim I_a(\tau)[t] \geq \dim I_{a^*}(\tau)[t]$ for all but finitely many $a$.  Hence, for all but countably many $a$, we have that $\dim M_a(\tau)/I_a(\tau) \leq \dim M_{a^*}(\tau)/I_{a^*}(\tau)$ is finite dimensional.  Let $\{q_i\}$ be the at most countable set of points where $\dim M_a(\tau)/I_a(\tau)$ is not finite dimensional.

Let $m = \dim M_{a}(\tau)/I_{a}(\tau)$ for generic $a \in \ell$ and let $N = \dim M_a(\tau)^{\leq m + 1}$.  We may now define a map $\eta: \ell - \{q_i\} \to \Gr_{N - m}(M_a(\tau)^{\leq m + 1})$ by $a \mapsto I_a(\tau) \cap M_a(\tau)^{\leq m + 1}$.  Here, we are using the fact that $M_a(\tau)^{> m} \subset I_a(\tau)$. Notice that $\eta$ is polynomial on $\ell$, hence has an extension to all of $\ell$.  Now, the action of any $x \in H_a(W)$ on $\widehat{I}_a(\tau)$ is continuous in $a$, so $\widehat{I}_a(\tau) := \eta(a) \oplus M_a(\tau)^{> m}$ is a submodule of $M_a(\tau)$ for all $a \in \ell$.  Hence, for all $a \in \ell$, we see that $M_a(\tau)/\widehat{I}_a(\tau)$ is a finite dimensional representation of $H_a(W)$.  

It now remains for us to show that the character $\chi_a(g, t)$ of $M_a(\tau)/\widehat{I}_a(\tau)$ is as desired.  Note that $\chi_a(g, t)$ can have a non-zero coefficient of $t^q$ only for $q \in h(\tau) + \ZZ_{\geq 0}$.  Write $[t^q] \chi_a(g, t)$ for the coefficient of $t^q$ in $\chi_a(g, t)$.  For fixed $g$, we claim that, for all $q \in h(\tau) + \ZZ_{\geq 0}$, the function $a \mapsto [t^q] \chi_a(g, t)$ is continuous for $a \in \ell$.  But notice that $[t^q] \chi_a(g, t)$ is the character of $g$ on the subspace of $M_a(\tau)/\widehat{I}_a(\tau)$ corresponding to $t^q$.  This is a regular family of representations of $W$ parametrized by $a$, hence its character $[t^q] \chi_a(g, t)$ is continuous in $a$. 

Now, let $\chi^*_a(g, t)$ be the desired character for $a \in \ell$.  For points not on intersections of lines, notice that $M_a(\tau)/\widehat{I}_c(\tau)$ is the unique finite dimensional representation, so by Proposition \ref{unique}, we have $\chi_a(g, t) = \chi^*_a(g, t)$ for $a$ on $\ell$ only.  This means that $[t^q]\chi_a(g, t) = [t^q]\chi^*_a(g,t)$ for all $q$ and $a$ only on $\ell$.  By continuity, we see that $[t^q]\chi_a(g, t) = [t^q]\chi^*_a(g, t)$ for all $a \in \ell$, meaning that 
$\chi_a(g, t) = \chi_a^*(g, t)$ for all $a \in \ell$.  Thus $M_a(\tau)/\widehat{I}_a(\tau)$ has the desired character, finishing the proof.
\end{proof}

We have now proved Theorem \ref{lines}, showing that, for each point $a$ on exactly one line, there is a unique irreducible finite dimensional representation $Y_a$.  Now, by Proposition 1.12 in \cite{BEG}, we have that $\Ext^1(Y_a, Y_a) = 0$, meaning that the category of finite dimensional representations of $H_a(W)$ is semi-simple at these points.  This gives a complete description of finite dimensional representation theory of $H_a(W)$ away from intersection points of the lines.  

\subsection{At intersection points}

At intersection points, the irreducible representations coming from each incident line may degenerate, creating a more complicated representation theory.  Our approach will be to use the functors $\delta: M \mapsto M^*$ and $\omega: M \mapsto M \otimes \CC_2$ given in Lemmas \ref{flip} and \ref{rotate} to reduce our analysis to a smaller number of points.  At these points, we will use Lemma \ref{poletrick} and Proposition \ref{exist} to determine the structure of the representations.

We will use the following notations.  As in Proposition \ref{unique}, take $N = (n_{\CC_0}, n_{\CC_1}, n_{\CC_2}, n_{V_0}, n_{V_1}, n_{V_2}, n_{\CC^3})$.  Now, for any such $N$, define the character $\chi_{N}(g,t) = \sum_\sigma n_\sigma \chi_{M_a(\sigma)}(g,t)$.  Further, for any $\tau$ with the decomposition $[L_a(\tau)] = \sum_\sigma n_{\tau\sigma} [M_a(\sigma)]$, define the vector $N_\tau = (n_{\tau \CC_0}, n_{\tau\CC_1}, n_{\tau\CC_2}, n_{\tau V_0}, n_{\tau V_1}, n_{\tau V_2}, n_{\tau\CC^3})$.  Now, for any finite dimensional representation $M$ of $H_c(\tau)$, let $\grade(M) = [h_\text{min}(M), h_\text{max}(M)]$ be the pair of lowest and highest $h$-weights of $M$.

\begin{proof}[Proof of Theorem \ref{points}]
Notice first that Lemmas \ref{flip} and \ref{rotate} together mean that the categories $\OO(H_a(W))$ are equivalent for $c$ corresponding to all permutations of $(a_0, a_1, a_2)$.  It therefore suffices for us to prove the theorem for a suitable permutation of the parameters.

Let $a$ be an intersection point of the lines and $M$ a finite dimensional irreducible representation of $H_a(W)$.  Write $[M] = \sum_\sigma n_\sigma [M_a(\sigma)]$ by Lemma \ref{basis}.  Take $y = \zeta^{a_0/2}$ and $w = \zeta^{a_1/2}$.  Then, Lemma \ref{poletrick} gives us a matrix $A$ with coefficients in $\CC(y^{\frac{1}{3}}, w^{\frac{1}{3}})$ such that all possible values of $N$ are in the nullspace of $A$.  At each intersection point, we will find that the nullspace of $A$ has a basis consisting of vectors $N$ such that $\chi_N$ are characters of finite dimensional representations $M_N$ (induced by the lines incident to $a$).

We will examine each type of point separately.  First, we note the following situations, which will occur frequently in our analysis. 
\begin{lemma} \label{irred}
Let $M$ be a finite dimensional $H_a(W)$-module with lowest (highest) weight $\tau$.  Suppose that there is a basis for the nullspace of $A$ consisting of vectors $N$ such that each $\chi_N$ is the character of some finite dimensional module $M_N$ and that the $M_N$ have distinct lowest (highest) weights that are at most (at least) $h(\tau)$.  Then, $M$ is irreducible.
\end{lemma}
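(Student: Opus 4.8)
The plan is to prove the lowest weight version in detail; the highest weight case then follows by the identical argument with all inequalities and extremal grades reversed (equivalently, by transporting through the duality $\delta$ of Lemma \ref{flip}). Write the given basis of the nullspace of $A$ as $N_{(1)}, \dots, N_{(k)}$, and let $M_{(l)}$ be the finite dimensional module with $\chi_{M_{(l)}} = \chi_{N_{(l)}}$, whose lowest weight I denote $h_l$; by hypothesis the $h_l$ are distinct and each $h_l \le h(\tau)$.

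The first and central step is a no-cancellation observation about $\ker A$: any nonzero vector $v = \sum_l c_l N_{(l)}$ in $\ker A$ has $\chi_v$ with lowest weight exactly $\min\{h_l : c_l \neq 0\}$, and in particular at most $h(\tau)$. Indeed, let $l_0$ minimize $h_l$ over the contributing indices; since the $h_l$ are distinct, $l_0$ is unique, so only $M_{(l_0)}$ contributes at grade $t^{h_{l_0}}$. The coefficient of $t^{h_{l_0}}$ in $\chi_v = \sum_l c_l \chi_{M_{(l)}}$ is therefore $c_{l_0}$ times the character of the nonzero lowest graded piece of $M_{(l_0)}$, a nonzero class function; hence no cancellation occurs at the minimal grade and the lowest weight of $\chi_v$ is $h_{l_0} \le h(\tau)$. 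This step, which rests entirely on the distinctness of the $h_l$, is the main point to get right and the one place where the hypothesis is used essentially.

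I would then apply this to the composition factors of $M$. Since $M$ is finite dimensional, so is each composition factor $L_a(\rho)$, and by Lemma \ref{basis} and Lemma \ref{poletrick} its class (in the standard-module basis) is a nonzero vector of $\ker A$; the observation therefore gives $h(\rho) \le h(\tau)$ for every composition factor $L_a(\rho)$ of $M$. On the other hand, $M$ has lowest weight $\tau$, so it is concentrated in grades $\ge h(\tau)$, whence every subquotient satisfies $h(\rho) \ge h(\tau)$. Combining the two inequalities forces $h(\rho) = h(\tau)$ for all composition factors. Now in a composition series the bottom grade of $M$ is the direct sum of the lowest graded pieces of those factors with $h(\rho) = h(\tau)$, namely $\bigoplus \rho^{\oplus m_\rho}$; since this bottom grade is a single copy of $\tau$, the only composition factor is $L_a(\tau)$, with multiplicity one. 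Thus $M$ has a unique composition factor and so $M \cong L_a(\tau)$ is irreducible.

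For the highest weight case I would run these same three steps with ``lowest'' replaced by ``highest'', the minimal grade replaced by the maximal grade, and $\le h(\tau)$ replaced by $\ge h(\tau)$, working in $\bar{\OO}$ and using that the pole-trick computation of Lemma \ref{poletrick} is symmetric there (characters of modules in $\bar{\OO}$ are series in $t^{-1}$ with no poles, giving the same nullspace condition). The only care needed is again the absence of cancellation, now at the maximal grade, which is guaranteed by the distinctness of the highest weights.
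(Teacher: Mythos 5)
Your proof is correct and rests on exactly the same key idea as the paper's: since the basis modules $M_N$ have distinct extremal weights bounded by $h(\tau)$, no cancellation can occur at the extremal grade of any nonzero linear combination, and every finite-dimensional subquotient of $M$ has class in $\ker A$ (hence in the span of that basis), forcing a contradiction with weight bounds. The only difference is packaging --- the paper argues by contradiction with a single short exact sequence $0 \to M'' \to M \to M' \to 0$, while you run the same no-cancellation argument over a full composition series and then count multiplicities in the bottom (top) graded piece --- so this is essentially the paper's proof, written out slightly more carefully.
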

\begin{proof}
Suppose for contradiction that $M$ were not irreducible; then, we can find some irreducible submodule $M'$ of $M$ with lowest weight $\tau$.  Let $M \onto M'$ be the corresponding surjection and $M''$ its kernel, which has higher lowest weight than $\tau$. Because $M$ and $M'$ are finite dimensional, their characters are given by $\chi_{N_M}$ and $\chi_{N_{M'}}$ for some $N_M$ and $N_{M'}$ in the nullspace of $A$; therefore, the character of $M''$ is $\chi_{N_M - N_{M'}}$.  Both $N_M$ and $N_{M'}$ are in the span of the given basis vectors $\{N_i\}$ for the nullspace of $A$, hence so is $N_{M} - N_{M'}$.  Because the $M_N$ have distinct lowest weights for $N$ in the basis, this means that $N_M - N_{M'}$ contains a term with lowest (highest) weight at most (at least) $h(\tau)$, a contradiction. So $M$ must be irreducible.
\end{proof}

\begin{lemma} \label{singular}
If $a_2 = 3m + 2$ for some $m$, then $\dim \Hom_{H_a(W)}(M_a(\CC_0), M_a(\CC_1)) = 1$ if $m > 0$ and $\dim \Hom_{H_a(W)}(M_a(\CC_1), M_a(\CC_0)) = 1$ if $m < 0$.
\end{lemma}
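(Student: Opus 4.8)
The plan is to pin down the two $\Hom$-dimensions by combining the Hecke-algebra bound of Corollary \ref{dimineq} with a lowest-weight computation, and then to establish that the surviving map is actually nonzero. First I would record that the hypothesis $a_2 = 3m+2$ forces $q_1 = e^{\frac{2\pi i}{3}(1+a_2)} = e^{2\pi i(m+1)} = 1$, so by Table \ref{HeckeReps} the one-dimensional Hecke representations $\widehat{\CC}_0$ and $\widehat{\CC}_1$ coincide; both are irreducible, so Corollary \ref{dimineq} gives $\dim\Hom_{H_a(W)}(M_a(\CC_0),M_a(\CC_1)) \le 1$ and $\dim\Hom_{H_a(W)}(M_a(\CC_1),M_a(\CC_0)) \le 1$. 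This supplies the upper bound in both directions at once.

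Next I would fix the direction using Lemma \ref{weights}. A nonzero map $M_a(\sigma)\to M_a(\tau)$ embeds the lowest-weight copy of $\sigma$ into $M_a(\tau)$ in grade $h(\sigma)-h(\tau)$, so it can exist only when $h(\sigma)-h(\tau)\in\NN$. Here $h(\CC_0)-h(\CC_1) = \tfrac{4}{3}\big[(a_2-a_0)-(a_1-a_2)\big] = \tfrac{4}{3}(3a_2) = 4a_2 = 12m+8$, using $a_0+a_1+a_2=0$. For $m>0$ this is a positive integer, so the only direction that can be nonzero is $M_a(\CC_0)\to M_a(\CC_1)$ and the opposite $\Hom$ vanishes; for $m<0$ it is a negative integer, forcing the map to go $M_a(\CC_1)\to M_a(\CC_0)$. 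Thus in each case exactly one of the two $\Hom$-spaces can be nonzero, and it is the one named in the statement.

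It remains to prove the matching lower bound, that is, that the surviving map is genuinely nonzero; this is the main obstacle, since Corollary \ref{dimineq} only bounds $\Hom$ from above. Treating the case $m>0$ (the case $m<0$ being identical with the roles of $\CC_0$ and $\CC_1$ interchanged), I would exhibit a singular vector of $W$-type $\CC_0$ inside $M_a(\CC_1)$ in grade $d=12m+8$; concretely one must find a copy of $\CC_0$ in the isotypic decomposition of $(S^{d}V^*)\otimes\CC_1$ annihilated by every Dunkl operator $y\in V$. The cleanest way to detect this is through the contravariant (Shapovalov) form on the $\CC_0$-isotypic component of $M_a(\CC_1)[d]$: a singular vector exists precisely when this form degenerates, and its determinant is a polynomial in $a$ which, by Corollary \ref{dimineq}, can vanish only on the walls $q_1=1$, namely $a_2\equiv 2\pmod 3$. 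A direct contravariant-form/Dunkl-operator computation, of the same flavor as the analysis of the maps $\nu_\tau$ in the proof of Proposition \ref{exist}, would then show that the form actually degenerates on these walls, producing the required singular vector and hence a nonzero homomorphism. Combined with the upper bound this yields $\dim\Hom = 1$ exactly, as claimed.
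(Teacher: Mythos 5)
Your first two steps are sound and agree with the paper: Corollary \ref{dimineq} plus the irreducibility of $\widehat{\CC}_0$ and $\widehat{\CC}_1$ gives $\dim\Hom \leq 1$ in both directions (and $q_1=1$ makes $\widehat{\CC}_0 \cong \widehat{\CC}_1$, so the Hecke $\Hom$ is one- rather than zero-dimensional), and the computation $h(\CC_0)-h(\CC_1)=4a_2=12m+8$ correctly forces the direction of any nonzero map. The gap is in your third step, which is the actual content of the lemma: you never produce the singular vector, you only assert that ``a direct contravariant-form/Dunkl-operator computation \ldots would then show that the form actually degenerates.'' Two things go wrong. First, the detection logic is off: degeneracy of the contravariant form on the $\CC_0$-isotypic component of $M_a(\CC_1)[d]$ does not by itself yield a singular copy of $\CC_0$ in that degree --- kernel vectors of the form need only lie in the maximal proper submodule, which may be generated by singular vectors of other types in lower degrees --- and the claim that the determinant ``can vanish only on the walls $q_1=1$'' does not follow from Corollary \ref{dimineq}, since that corollary bounds maps from $M_a(\CC_0)$ but the form also degenerates on walls where maps from $M_a(V_j)$ or $M_a(\CC^3)$ exist. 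Second, and more fundamentally, the degree $d=12m+8$ grows with $m$, so there is no single finite computation to defer to: the calculations of $\nu_\tau$ in Proposition \ref{exist} that you invoke as a model are checks in fixed small grades on fixed lines, and they do not give an argument uniform in $m$.

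The paper supplies exactly this missing uniform construction. It sets $P=\prod_{H\in\AAA}\alpha_H$, a degree-four polynomial satisfying $s\cdot P=\lambda_s P$ for every reflection $s$, so that each power $P^k$ spans a one-dimensional character-isotypic line in the relevant standard module. The Dunkl operators then act on this line in closed form, $D_yP^k=\bigl[kA+B(1+\omega+\cdots+\omega^{k-1})+C(1+\omega^2+\cdots+\omega^{2k-2})\bigr]P^{k-1}$ in $M_a(\CC_0)$ (and similarly in $M_a(\CC_1)$), where $A=\partial_yP$ and $B$, $C$ are the contributions of the two conjugacy classes of reflections. Because the geometric sums are periodic in $k$ modulo $3$, the vanishing condition becomes an identity in $k$ and the parameters that can be verified once, for all $m$ simultaneously (this is the finite Magma computation), producing the singular copy of $\CC_0$ in $M_a(\CC_1)$ when $m\geq 0$ and of $\CC_1$ in $M_a(\CC_0)$ when $m<0$. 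If you want to keep your Shapovalov-form framing, you would still need an ansatz of this kind, or some other mechanism uniform in $m$, to carry out the degeneration argument; as written, the lower bound --- hence the lemma --- is unproved.
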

\begin{proof}
First, notice that because $\widehat{\CC}_0$ and $\widehat{\CC}_1$ are always irreducible, we have in all cases that
\[
\dim \Hom_{H_a(W)}(M_a(\CC_0), M_a(\CC_1)) \leq \dim \Hom_{\HH(\{q_{H, j}^*\})}(\widehat{\CC}_0, \widehat{\CC}_1) \leq 1.
\]
Similarly, we see that $\dim \Hom_{H_a(W)}(M_a(\CC_1), M_a(\CC_0)) \leq 1$.  So it suffices to show that $M_a(\CC_0)$ and $M_a(\CC_1)$ contain singular copies of $\CC_1$ and $\CC_0$, respectively, for the given values of $a_2$.  

For $H \in \AAA$ with $H = H_s$ for some $s \in S$, take $\alpha_H = \alpha_s$.  Let $P = \prod_{H \in \AAA} \alpha_H$; note that $P$ is a fourth degree polynomial in $\CC[V]$.  We claim that for $a_2 = 3m + 2$, $P^{-3m+1}$ is singular in $M_a(\CC_0)$ if $m < 0$ and $P^{3m+2}$ is singular in $M_a(\CC_1)$ if $m \geq 0$.  For $y \in V$, let
\[
D_y = \partial_y - \sum_{s \in S} c_s (y, \alpha_s)(x, \alpha_s^\vee) s
\]
denote the Dunkl operator corresponding to $y$.  Then we may write
\[
D_y P = \partial_y P - \sum_{s \in S_1} c_1 (y, \alpha_s)(x, \alpha_s^\vee) s - \sum_{s \in S_2} c_2 (y, \alpha_s)(x, \alpha_s^\vee) s,
\]
where $S_1$ consists of reflections with $\lambda_s = \omega$ and $S_2$ consists of reflections with $\lambda_s = \omega^2$. Define the quantities $A = \partial_y P$, $B = \sum_{s \in S_1} c_1 (y, \alpha_s)(x, \alpha_s^\vee) s$, and $C = \sum_{s \in S_2} c_2 (y, \alpha_s)(x, \alpha_s^\vee) s$.  For any $s \in S$, notice that
\[
s \cdot P = \prod_{H \in \AAA} (s \cdot \alpha_H) = \lambda_s \alpha_{H_s} \prod_{H \neq H_s} (s \cdot \alpha_H) = \lambda_s P,
\]
where we note that $s$ acts by the identity on $\prod_{H \neq H_s} \alpha_H$.  Therefore, we find that $s \cdot P = \omega P$ for $s \in S_1$ and $s \cdot P = \omega^2 P$ for $s \in S_2$.  Then, notice that in $M_a(\CC_0)$, we have
\[
D_y P^k = [k A + B(1 + \omega + \cdots + \omega^{k-1}) + C (1 + \omega^2 + \cdots + \omega^{2k - 2})] P^{k-1}
\]
and that in $M_a(\CC_1)$, we have
\[
D_y P^k = [k A + B(\omega + \omega^2 + \cdots + \omega^{k}) + C (\omega^2 + \omega^4 + \cdots + \omega^{2k})]P^{k-1}.
\]
In each case, computing the values of $A$, $B$, and $C$ in Magma, we find that $D_y P^k = 0$ for all $y$ if and only if $k = -3m + 1$ with $m < 0$ for $M_a(\CC_0)$ or $k = 3m + 2$ with $m \geq 0$ for $M_a(\CC_1)$. This establishes the claim. 
\end{proof}
\begin{remark}
We may also obtain Lemma \ref{singular} using Theorem 3.2 in \cite{BE} and the classification of finite dimensional irreducible representations of $H_a(\ZZ/3\ZZ)$.
\end{remark}

We now analyze each type of intersection point separately.

\noindent \textbf{Points of type (11):} Let $a = (a_0, a_1, a_2)$ be a point of type (11). Then, we have that $a_i - a_{i-1} = 3m_i + 3/2$ and $a_{i + 1} - a_{i} = 3 m_{i+1} + 3/2$ for some $i$ and $m_i, m_{i+1} \in \ZZ$.  Assume first that $i = 2$; that is, we have that $a_2 - a_1 = 3 m_2 + 3/2$ and $a_0 - a_2 = 3 m_0 + 3/2$.  This means that $(a_0, a_1, a_2) = (2 m_0 + m_2 + 3/2, -2m _2 - m_0 - 3/2, m_2 - m_0)$.  Since $a$ is not a point of type (1122), we must have that $a_0 - 1/2, a_1 - 1/2 \equiv 0 \pmod{3}$.  This means in particular that $m_0 \neq m_2 \pmod{3}$ and that $4m_0 + 2m_2 + 3$ and $2m_0 + 4m_2 + 3$ are non-zero.

Now, we find that that $(y, w) = (\zeta^{4m_0 + 2m_2 + 3}, \zeta^{-4m _2 - 2m_0 - 3})$.   For $m_0, m_2 \in \ZZ$, this gives a finite number of possible values for $(y, w)$.  Computing in Magma, we find that, for each of these values of $(y, w)$, $A$ has a two dimensional nullspace spanned by $N_0 = (1, 0, 0, 1, 0, 0, -1)$ and $N_1 = (0, 1, 0, 0, 1, 0, -1)$.  Hence, there must be exactly two finite dimensional irreducible representations at these points.

By Proposition \ref{exist}, there are finite dimensional representations $S_0$ and $S_1$ with characters $\chi_{N_0}$ and $\chi_{N_1}$ at this point.  Let the lowest weights of $S_0$ and $S_1$ be $\tau_0$ and $\tau_1$, respectively, where $\tau_i \in \{\CC_i, V_i\}$ depends on $a$.  Then, we see that $L_a(\tau_0)$ and $L_a(\tau_1)$ must be the two finite dimensional representations.  It remains to compute their characters.  We now consider different cases for the sign of $m_0$ and $m_2$.
\begin{itemize}
\item First, suppose that $m_2, m_0 \geq 0$. Then, we have that $(\tau_0, \tau_1) = (\CC_0, \CC_1)$.  In this case, we have that $\grade(S_0) = [h(\CC_1) + 4m_2 - 4m_0, h(\CC_1) + 4m_2 + 2m_0 + 1]$ and $\grade(S_1) = [h(\CC_1), h(\CC_1) + 6m_2 + 1]$.  

If $m_2 > m_0$, $S_0$ is irreducible by Lemma \ref{irred}.  Now, $L_a(\tau_1)$ is a quotient of $S_1$, hence its character must be given by $\chi_{N_1} - k \chi_{N_0} = \chi_{(-k, 1, 0, -k, 1, 0, -1+k)}$ for some $k \geq 0$.  By Lemma \ref{singular}, we see that $\dim \Hom_{H_a(W)}(M_a(\CC_0), M_a(\CC_1)) = 1$ in this case.  Therefore, there is exactly one singular copy of $\CC_0$ in $M_a(\CC_1)$.  But $h(\CC_0) < h(\CC^3)$, so $\CC_0$ is the singular vector of lowest weight in $M_a(\CC^1)$, hence it is singular in $S_1$ as well.  This implies that $k = 1$ and hence $L_a(\tau_1)$ has character $\chi_{(-1, 1, 0, -1, 1, 0, 0)}$.  

Further, we obtain a map $M_a(\tau_0) \to S_1$; but the unique singular representation $\CC^3 \subset M_a(\tau_0)$ maps to zero in $S_1$ by the construction of $S_1$, meaning that this map factors through $M_a(\tau_0) \to L_a(\tau_0) \to S_1$ to give rise to a map $S_0 \to S_1$.  Notice now that $\coker(S_0 \to S_1)$ is a non-zero submodule of $L_a(\tau_1)$, hence is the other irreducible representation.  If $m_2 < m_0$, $S_1$ is irreducible by Lemma \ref{irred}, and an analogous argument again using the other portion of Lemma \ref{singular} shows that $\coker(S_1 \to S_0) = L_a(\tau_0)$ has character $\chi_{(1, -1, 0, 1, -1, 0, 0)}$.

To summarize, if $m_2 > m_0$, $S_0$ and $\coker(S_0 \to S_1)$ are the irreducible representations with characters $\chi_{N_0} = \chi_{(1, 0, 0, 1, 0, 0, -1)}$ and $\chi_{N_1 - N_0} = \chi_{(-1, 1, 0, -1, 1, 0, 0)}$.  If $m_2 < m_0$, $S_1$ and $\coker(S_1 \to S_0)$ are the irreducible representations with characters $\chi_{N_1} = \chi_{(0, 1, 0, 0, 1, 0, -1)}$ and $\chi_{N_0 - N_1} = \chi_{(1, -1, 0, 1, -1, 0, 0)}$.

\item Now, suppose that $m_2 \geq 0$, $m_0 < 0$.  Then, we have that $(\tau_0, \tau_1) = (V_0, \CC_1)$ with $\grade(S_0) = [h(\CC_1) + 4m_2 + 2m_0 + 3, h(\CC_1) + 4m_2 - 4m_0 - 2]$ and $\grade(S_1) = [h(\CC_1), h(\CC_1) + 6 m_2 + 1]$.  If $4m_0 + 2m_2 + 3$ and $2m_0 + 4m_2 + 3$ have different signs, we see that $S_0$ and $S_1$ are representations with either highest lowest weight or lowest highest weight in the set $\{S_0, S_1\}$, so by Lemma \ref{irred} they are both irreducible.

If $4m_0 + 2m_2 + 3$ and $2m_0 + 4m_2 + 3$ are both positive, then $S_0$ is irreducible by Lemma \ref{irred}.  Now, we see that $L_a(\tau_1)$ is a quotient of $S_1$, hence its character must be given by $\chi_{N_1} - k \chi_{N_0} = \chi_{(-k, 1, 0, -k, 1, 0, -1 + k)}$ for some $k \geq 0$.  If $k > 0$, then we have $h(\CC_1) < h(V_0) < h(\CC^3) < h(\CC_0) < h(V_1)$, meaning that $V_0$ must be singular in $\CC_1$.  But by Lemma \ref{mappings2}, this is impossible because $a_1$ is of the form $3m + 1/2$.  Therefore, $k = 0$, meaning that $S_1$ is irreducible.  If $4m_0 + 2m_2 + 3$ and $2m_0 + 4m_2 + 3$ are both negative, the same argument with the roles of $S_0$ and $S_1$ reversed implies again that both $S_0$ and $S_1$ are irreducible.

To summarize, we have that $S_0$ and $S_1$ are the irreducible representations of $H_a(W)$ in this case, and they have characters $\chi_{N_0} = \chi_{(1, 0, 0, 1, 0, 0, -1)}$ and $\chi_{N_1} = \chi_{(0, 1, 0, 0, 1, 0, -1)}$.

\item Now, suppose that $m_2 < 0$, $m_0 \geq 0$. Then, we have that $(\tau_0, \tau_1) = (\CC_0, V_1)$ with $\grade(S_0) = [h(\CC_0), h(\CC_0) + 6m_0 + 1]$ and $\grade(S_1) = [h(\CC_0) + 4m_0 + 2m_2 + 3, h(\CC_0) + 4m_0 - 4m_2 - 2]$.  As in the previous case, if $4m_0 + 2m_2 + 3$ and $2m_0 + 4m_2 + 3$ have different signs, $S_0$ and $S_1$ are representations with either highest lowest weight or lowest highest weight in the set $\{S_0, S_1\}$, so they are irreducible by Lemma \ref{irred}.  

If $4m_0 + 2m_2 + 3$ and $2m_0 + 4m_2 + 3$ are both positive, then $S_1$ is irreducible by Lemma \ref{irred}.  Now, we see that $L_a(\tau_0)$ is a quotient of $S_0$, hence its character must be given by $\chi_{N_0} - k \chi_{N_1} = \chi_{(1, -k, 0, 1, -k, 0, -1 + k)}$ for some $k \geq 0$.  If $k > 0$, then we have $h(V_1) < h(\CC_0) < h(\CC^3) < h(V_0) < h(\CC_1)$, meaning that $\CC_0$ must be singular in $V_1$.  But by Lemma \ref{mappings2}, this is impossible because $a_0$ is of the form $3m + 1/2$.  Therefore, $k = 0$, meaning that $S_0$ is irreducible.  If $4m_0 + 2m_2 + 3$ and $2m_0 + 4m_2 + 3$ are both negative, the same argument with the roles of $S_0$ and $S_1$ reversed implies again that both $S_0$ and $S_1$ are irreducible.

To summarize, we have that $S_0$ and $S_1$ are the irreducible representations of $H_a(W)$ in this case, and they have characters $\chi_{N_0} = \chi_{(1, 0, 0, 1, 0, 0, -1)}$ and $\chi_{N_1} = \chi_{(0, 1, 0, 0, 1, 0, -1)}$.

\item Finally, suppose that $m_2, m_0 < 0$. Then, we have that $(\tau_0, \tau_1) = (V_0, V_1)$.  Now, notice that $\delta$ gives a map between irreducible representations of $H_a(W)$ and $H_{a'}(W)$, where $a' = (-2m_2 - m_0 - 3/2, 2m_0 + m_2 + 3/2, m_2 - m_0) = (2m_0' + m_2' - 3/2, -2m_2'-m_0' +3/2, m_2' - m_0')$ for $m_0' = -m_2 - 1$ and $m_2' = -m_0 - 1$, where $m_0', m_2' \geq 0$.  Now, if $m_2 < m_0 \iff m_2' > m_0'$, then by the first case $\delta(S_0)$ and $\coker(\delta(S_0) \to \delta(S_1))$ are the irreducible representations of $H_{a'}(W)$.  Therefore, we see that $S_0$ and $\ker(S_1 \to S_0)$ are the irreducible representations of $H_a(W)$.  If $m_2 > m_0 \iff m_2' < m_0'$, then $\delta(S_1)$ and $\coker(\delta(S_1) \to \delta(S_0))$ are the irreducible representations of $H_{a'}(W)$, hence $S_1$ and $\ker(S_0 \to S_1)$ are the irreducible representations of $H_a(W)$.

To summarize, if $m_2 > m_0$, $S_1$ and $\ker(S_0 \to S_1)$ are irreducible with characters $\chi_{N_1} = \chi_{(0,1,0,0,1,0,-1)}$ and $\chi_{N_0 - N_1} = \chi_{(1, -1, 0,1,-1,0,0)}$.  If $m_2 < m_0$, $S_0$ and $\ker(S_1 \to S_0)$ are irreducible with characters $\chi_{N_0} = \chi_{(1,0,0,1,0,0,-1)}$ and $\chi_{N_1 - N_0} = \chi_{(-1,1,0,-1,1,0,0)}$.
\end{itemize}
Now consider $i \neq 2$.  The map $\omega$ given by Lemma \ref{rotate} maps the simple object $L_{a_0, a_1, a_2}(\tau)$ to a simple object in $\OO(H_{a_2, a_0, a_1}(W))$ with lowest weight $\tau \otimes \CC_2$.  Thus, we find that $\omega(L_{a_0, a_1,a_2}(\tau)) = L_{a_2, a_0, a_1}(\tau \otimes \CC_2)$, so $L_{a_2, a_0, a_1}(\tau \otimes \CC_2)$ is finite dimensional if and only if $L_{a_0, a_1, a_2}(\tau)$ is finite dimensional.  Because $\omega$ is exact, it induces a map $K(\OO(H_{a_0, a_1, a_2}(W))) \to K(\OO(H_{a_2, a_0, a_1}(W)))$.  This allows us to compute the character of $L_{a_2, a_0, a_1}(\tau \otimes \CC_2)$ using the fact that $\omega(M_{a_0, a_1, a_2}(\tau)) = M_{a_2, a_0, a_1}(\tau \otimes \CC_2)$. Applying $\omega$ either one or two times to the representations given in each case above gives the claimed list of finite dimensional irreducible representations.  This completes the analysis for points of type (11).

\noindent \textbf{Points of type (22):}  At these points, we have that $a_i = m_i + 1/2$ and $a_{i+1} = m_{i+1} + 1/2$ for some $i$ with $m_i, m_{i+1} \not \equiv 0 \pmod{3}$.  If $m_i \equiv m_{i+1} \pmod{3}$, then $a_{i+2} = - m_i - m_{i+1} - 1$.  In this case, we have $a_{i+1} - a_{i+2} = m_i + 2 m_{i+1} + 3/2$, so this point is also on a line of type $1$ and is not of type (22).  Hence, we must have that $m_i \not\equiv m_{i+1} \pmod{3}$.  

First consider the case $i = 0$. Then, we have $(a_0, a_1, a_2) = (m_0 + 1/2, m_1 + 1/2, -m_0 - m_1 - 1)$ and hence $(y, w) = (\zeta^{2m_0 + 1}, \zeta^{2m_1 + 1})$.  Again, we see that there are a finite number of possible values for $(y, w)$, and computing in Magma we find that, for each of these values, $A$ has a two dimensional nullspace spanned by $M_1 = (1, 0, 1, 0, -1, 0, 0)$ and $M_0 = (0, 1, 1, -1, 0, 0, 0)$.  Therefore, there are exactly two finite dimensional irreducible representations at these points.

By Proposition \ref{exist}, we have finite dimensional representations $R_1$ and $R_0$ with characters $\chi_{M_1}$ and $\chi_{M_0}$ at these points.  Let the lowest weights of $R_1$ and $R_0$ be $\rho_1$ and $\rho_0$, respectively, where the values of $\rho_1 \in \{\CC_0, \CC_2\}$ and $\rho_0 \in \{\CC_1, \CC_2\}$ depend on the location of $a$.  By Lemma \ref{weights}, $\rho_1 = \CC_0$ if $a_0 > 0$ and $\rho_1 = \CC_2$ if $a_0 < 0$; similarly, $\rho_0 = \CC_1$ if $a_1 < 0$ and $\rho_0 = \CC_2$ if $a_1 > 0$.  We divide now into cases depending on the sign of $m_0$ and $m_1$.  

\begin{itemize}
\item Suppose first that $m_0 < 0, m_1 \geq 0$.  Then, we have that $\rho_1 = \rho_0 = \CC_2$.   Notice now that $\grade(R_1) = [h(\CC_2), h(\CC_2) -4m_0 - 4]$ and $\grade(R_0) = [h(\CC_2),h(\CC_2) + 4m_1]$.  We claim that $-4m_0 - 4 \neq 4m_1$; suppose otherwise for contradiction.  Then, we have that $m_0 + m_1 = -1$.  But we have that  $m_0 - m_1, m_0, m_1 \not \equiv 0 \pmod{3}$, so $\{m_0, m_1\} \equiv \{1,2\} \pmod{3}$, meaning that $m_0 + m_1 \equiv 0 \pmod{3}$, a contradiction.  Therefore, one of $R_1$ and $R_0$ has lower highest weight than the other. Let this representation be $R_j$; if $m_0+m_1 + 1> 0$, we have $j = 1$; otherwise, if $m_0 + m_1 + 1 < 0$, we obtain $j = 0$.  By Lemma \ref{irred}, we have that $R_j = L_a(\CC_2)$.  

Let the module $R_{1-j}'$ be the kernel of the map $R_{1-j} \to L_a(\CC_2)$.  Notice that $R_{1-j}'$ is a finite dimensional representation with character $\chi_{M_{1-j}} - \chi_{M_{j}}$.   Suppose that $R_{1-j}'$ had lowest weight $\tau$.  Because $h(\tau) > h(\CC_2)$, we have $n_{\tau \CC_2} = 0$; further, we have $n_{\tau \tau} = 1$, because $N_\tau$ is in the integer span of $M_{1-j}$ and $M_j$, we find that $N_\tau = M_{1-j} - M_{j}$.  Thus we see that $R_{1-j}' = L_a(\tau)$, so $R_{j}$ and $R_{1-j}'$ are the two irreducible representations in this case and they have characters $\chi_{M_{j}}$ and $\chi_{M_{1-j}} - \chi_{M_{j}}$, respectively.

\item Now, suppose that $m_0 \geq 0, m_1 < 0$.  Then, we see that $\rho_1 = \CC_0$ and $\rho_0 = \CC_1$, meaning that $L_a(\CC_0)$ and $L_a(\CC_1)$ are the only finite dimensional irreducible representations.  Notice that $\grade(R_1) = [h(\CC_0), h(\CC_0) + 4m_0]$ and $\grade(R_0) = [h(\CC_0) + 4m_0 + 4m_1 + 4, h(\CC_0) + 4m_0]$.  Therefore, one of $R_1$ and $R_0$ has a higher lowest weight than the other and therefore is irreducible.  Let this be $R_k$; if $m_0 + m_1 + 1 > 0$, then $k = 0$; otherwise, if $m_0 + m_1 + 1 < 0$, then $k = 1$.  Now, by Lemma \ref{irred}, this means that $R_k$ is irreducible, so $L_a(\CC_{1 - k}) = R_k$. 

Applying the map $\delta$ of Lemma \ref{flip}, we see that $R_k$ and $R_{1 - k}$ have the same highest weight, as they had the same lowest weight in the previous case.  Therefore, we may define a map $R_{1 - k} \to L_a(\CC_{1 - k}) = R_k$; let $R_{1 - k}'$ be its kernel.  Notice that $R_{1 - k}'$ is finite dimensional with character $\chi_{M_{1 - k}} - \chi_{M_k}$.  Notice that $R_{1 - k}'$ has highest weight $\tau$ less than the common highest weight $\sigma$ of $R_k$ and $R_{1 - k}$, so applying Lemma \ref{irred} to $R_{1 - k}'$ and the basis $\{M_{1 - k} - M_k, M_k\}$ shows that $R_{1 - k}'$ is irreducible.  Therefore, the two irreducible representations in this case are $R_k$ and $R_{1 - k}'$, and they have characters $\chi_{M_k}$ and $\chi_{M_{1 - k}} - \chi_{M_k}$, respectively. 

\item Now, suppose that $m_0, m_1 \geq 0$; write $a_0 = m_0 + 1/2$ and $a_1 = m_1 + 1/2$ with $m_0, m_1 \not \equiv 0 \pmod{3}$.  In this case, $\rho_1 = \CC_0$ and $\rho_0 = \CC_2$, with $h(\CC_2) - h(\CC_0) = 4m_0 + 2 > 0$.  Now, from the form of its character, we see that $\grade(R_0) = [h(\CC_0), h(\CC_0) + 4m_0 + 1]$, while $R_1$ has lowest weight in grade $h(\CC_2) = h(\CC_0) + 4m_0 + 2$.  Because $R_0$ and $R_1$ exist in disjoint grades, no non-trivial linear combination of the characters of $R_0$ and $R_1$ can be the character of a quotient of either $R_0$ or $R_1$.  But the characters of $L_a(\CC_0)$ and $L_a(\CC_2)$ are in the integer span of the characters of $R_0$ and $R_1$, so we may conclude that $L_a(\CC_0) = R_0$ and $L_a(\CC_2) = R_1$ are the irreducible representations.

\item Finally, consider the case $m_0, m_1 < 0$; write $a_0 = -m_0 + 1/2$ and $a_1 = -m_1 + 1/2$ with $m_0, m_1 \not \equiv 0 \pmod{3}$.  We have that $\rho_1 = \CC_2$ and $\rho_0 = \CC_1$, with $h(\CC_2) - h(\CC_1) = 4m_1 - 2 > 0$.  Note that $\grade(R_1) = [h(\CC_1), h(\CC_1) + 4m_1 - 3]$, while $R_0$ has lowest weight in grade $h(\CC_2) = h(\CC_1) + 4m_1 - 2$.  As in the previous case, $R_0$ and $R_1$ have no non-trivial quotients whose characters are in the integer span of $\chi_{M_0}$ and $\chi_{M_1}$.  So we find again that $L_a(\CC_2) = R_1$ and $L_a(\CC_1) = R_0$ are the irreducible representations.
\end{itemize}
This completes the analysis in the case $i = 0$.  Thus far, we have shown that the finite dimensional irreducible representations correspond to the claimed list when $i = 0$.  For the general case $i \neq 0$, we may apply the equivalence of categories $\omega$ given by Lemma \ref{rotate}.  Notice that  $\omega(L_{a_0, a_1, a_2}(\tau)) = L_{a_2, a_0, a_1}(\tau \otimes \CC_2)$, so by applying $\omega$ either once or twice, we may determine the simple objects of $\OO(H_{-m_0-m_1-1, m_0+1/2, m_1+1/2}(W))$ and $\OO(H_{m_1 + 1/2, -m_0 - m_1 - 1, m_0 + 1/2}(W))$ from the simple objects of $\OO(H_{m_0 + 1/2, m_1 + 1/2, -m_0 - m_1 - 1}(W))$.  This means that $i$ applications of $\omega^{-1}$ to the finite dimensional irreducible representations for $i = 0$ gives the finite dimensional irreducible representations at general $i$.  The desired result then follows because $\omega$ preserves resolutions by exactness and it acts by $\omega(M_{a_0, a_1, a_2}(\CC_i)) = M_{a_2,a_0, a_1}(\CC_{i-1})$, $\omega(M_{a_0,a_1,a_2}(V_i)) = M_{a_2, a_0, a_1}(V_{i-1})$, and $\omega(M_{a_0, a_1, a_2}(\CC^3)) = M_{a_2, a_0, a_1}(\CC^3)$.

\noindent \textbf{Points of type (1122):} At these points, we have that $a_i = m_i + 1/2$ and $a_{i+1} = m_{i+1} + 1/2$ for some $i$ with $m_i, m_{i+1} \neq 0 \pmod{3}$.  This means that $a_{i+2} = -m_i - m_{i+1} -1$, so $a_{i+1} - a_{i+2} = m_i + 2m_{i+1} + 3/2$ and $a_{i+2} - a_i = -2m_i - m_{i+1} - 3/2$.  Hence, we see that $a$ is a point of type (1122) if and only if $m_i \equiv m_{i+1} \equiv 1, 2 \pmod{3}$.  

We first consider the case $i = 0$.  We have that $(a_0, a_1, a_2) = (m_0 + 1/2, m_1 + 1/2, -m_0 - m_1 - 1)$ and $(y, w) = (\zeta^{2m_0 + 1}, \zeta^{2m_1 + 1})$.  For integer $m_0$ and $m_1$, this gives a finite number of values of $(y, w)$.  Computing in Magma, we find that $A$ has a three dimensional nullspace spanned by $M_1 = (1,0,1,0,-1,0,0)$, $M_0=(0,1,1,-1,0,0,0)$, $N_1 = (0,1,0,0,1,0,-1)$, and $N_0 = (1,0,0,1,0,0,-1)$.  Therefore, there are exactly three finite dimensional irreducible representations at these points.

Now, by Proposition \ref{exist}, there are finite dimensional representations $R_1, R_0, S_1$, and $S_0$ with respective characters $\chi_{M_1}, \chi_{M_0}, \chi_{N_1}$, and $\chi_{N_0}$ at these points.  Let the lowest weights of these representations be $\tau_1, \tau_0, \rho_1$, and $\rho_0$.  Here, the values of $\tau_1 \in \{\CC_0, \CC_2\}$, $\tau_0 \in \{\CC_1, \CC_2\}$, $\rho_1 \in \{\CC_1, V_1\}$, and $\rho_0 \in \{\CC_0, V_0\}$ depend on the location of $a$.  We divide now into cases depending on the signs of $m_0$ and $m_1$.

\begin{itemize}
\item Suppose first that $m_0, m_1 \geq 0$.  In this case, we find that $(\tau_1, \tau_0, \rho_1, \rho_0) = (\CC_0, \CC_2, V_1, \CC_0)$ and that $h(\CC_2) > h(V_1) > h(\CC_0)$.  Thus, we see that $L_a(\CC_0), L_a(\CC_2)$, and $L_a(\CC_0)$ are the finite dimensional irreducible representations.  Notice now that the lowest weight $\CC_2$ of $R_0$ is the highest among $R_1, R_0, S_1, S_0$, so by Lemma \ref{irred}, we see that $R_0$ is irreducible, meaning that $L_a(\CC_2) = R_0$.

Now, notice that $h(\CC_2) - h(\CC_0) = 4m_0 + 2$ and that $h(V_1) - h(\CC_0) = 2m_0 + 1$.  By the character formulas for $R_1, R_0, S_1$, and $S_0$, this means that $\grade(R_1) = [h(\CC_0), h(\CC_0) + 4m_0 + 1]$, $\grade(R_0) = [h(\CC_0) + 4m_0 + 2, h(\CC_0) + 4m_0 + 4 m_1 + 3]$, $\grade(S_1) = [h(\CC_0) + 2m_0 + 2, h(\CC_0) + 4m_1 + 4m_0 + 3]$, and $\grade(S_0) = [h(\CC_0), h(\CC_0) + 4m_0 + 2 m_1 + 1]$.  Therefore, the highest weight of $R_1$ is least among $R_1, R_0, S_1$, and $S_0$; by Claim \ref{irred}, this mean that $R_1$ is irreducible and hence $L_a(\CC_0) = R_1$.

Note that $\ker(S_0 \to R_1)$ is a finite dimensional representation with character $\chi_{N_0} - \chi_{M_1} = \chi_{(0,0,-1,1,1,0,-1)}$.  Because $h(V_1) < h(V_0)$, we see that $\ker(S_0 \to R_1)$ has lowest weight $V_1$.  Now, notice that $L_a(V_1)$ is a quotient of $\ker(S_0 \to R_1)$ such that $\chi_{V_1} = \chi_{N_0} - k \chi_{M_1}$ for some $k \in \ZZ$.  Here, the coefficient of $\chi_{N_0}$ must be $1$ because $n_{V_1 V_1} = 1$, while $k \geq 1$ because $\dim L_a(V_1) \leq \dim S_0/R_1$.  But now notice that the $t^{h(\CC_0) + 4m_0 + 4m_1 + 2}$ coefficients of $\chi_{N_0}(1, t)$ and $\chi_{M_1}(1, t)$ are both equal to $1$, so $k \leq 1$ and hence $k = 1$.  Therefore, we see that $L_c(V_1) = \ker(S_0 \to R_1)$ and $\chi_{V_1} = \chi_{N_0} - \chi_{M_1}$.  In this case, we see that the finite dimensional irreducible representations are $R_0$, $R_1$, and $\ker(S_0 \to R_1)$, with characters $\chi_{M_0}$, $\chi_{M_1}$, and $\chi_{N_0 - M_1}$.

\item Now, suppose that $m_0, m_1 < 0$.  We have that $(\tau_1, \tau_0, \rho_1, \rho_0) = (\CC_2, \CC_1, \CC_1, V_0)$ and that $h(\CC_2) > h(V_0) > h(\CC_1)$.  As in the previous case, this implies that $L_a(\CC_2), L_a(V_0)$, and $L_a(\CC_1)$ are the finite dimensional irreducible representations and that $L_a(\CC_2) = R_1$. Now, notice that $h(\CC_2) - h(\CC_1) = -4m_1 + 2$ and that $h(V_0) - h(\CC_1) = -2m_1 + 1$.  As before, we conclude from the characters of $R_1, R_0, S_1$, and $S_0$ that $\grade(R_1) = [h(\CC_1) -4m_1 - 2, h(\CC_1) -4m_0 - 4m_1 - 6]$, $\grade(R_0) = [h(\CC_1), h(\CC_1) - 4m_1 - 4]$, $\grade(S_1) = [h(\CC_1), h(\CC_1) - 2m_0 - 4m_1 - 5]$, and $\grade(S_0) = [h(\CC_1) - 2m_1 - 1, h(\CC_1) - 4m_0 - 4m_1 - 6]$.  Thus we see that the highest weight of $R_0$ is least among $R_1, R_0, S_1$, and $S_0$, meaning that $L_a(\CC_1) = R_0$ by Claim \ref{irred}.

Now, note that $\ker(S_1 \to R_0)$ is a finite dimensional representation with character $\chi_{N_1} - \chi_{M_0} = \chi_{(0,0,-1,1,1,0,-1)}$. Notice that $\ker(S_1 \to R_0)$ has lowest weight $V_0$; therefore, $L_a(V_0)$ is a quotient of $\ker(S_1 \to R_0)$.  On the other hand, we may write $\chi_{V_0} = \chi_{N_1} - k \chi_{M_0}$ for some positive integer $k$; here $k$ must be positive because $\dim L_a(V_0) \leq \dim \ker(S_1 \to R_0) < \dim R_0$. But on the other hand, the coefficients of $t^{h(\CC_1) - 4 m_0 - 4 m_1 - 6}$ in $\chi_{M_0}$ and $\chi_{N_1}$ are both $1$, meaning that $k \leq 1$.  Thus we may conclude that $L_a(V_0) = \ker(S_1 \to R_0)$ and hence $\chi_{V_0} = \chi_{N_1} - \chi_{M_0}$.  In this case, we see that the irreducible representations are $R_0$, $R_1$, and $\ker(S_1 \to R_0)$, with characters $\chi_{M_0}, \chi_{M_1}$, and $\chi_{N_1 - M_0}$.  

\item Now, suppose that $m_0 \geq 0, m_1 < 0$. Then, we see that $\tau_1 = \CC_0$, $\tau_0 = \CC_1$, and
\[
\rho_1 = \begin{cases} V_1 & m_0 + 2 m_1 + 3/2 > 0 \\ \CC_1 & m_0 + 2 m_1 + 3/2 < 0 \end{cases} \text{ and } \rho_0 =  \begin{cases} \CC_0 & 2m_0 + m_1 + 3/2 > 0 \\ V_0 & 2m_0 + m_1 + 3/2 < 0 \end{cases}.
\]
We divide further into cases.

\noindent \textbf{Case 1:} If $m_0 + 2m_1 + 3/2 > 0$, then we see that $(\tau_1, \tau_0, \rho_1, \rho_0) = (\CC_0, \CC_1, V_1, \CC_0)$ and $h(\CC_1) > h(V_1) > h(\CC_0)$.  By Lemma \ref{irred}, this means that $L_a(\CC_1) = R_0$.  Now, the characters of $R_1, R_0, S_1, S_0$ imply that $\grade(R_1) = [h(\CC_0),h(\CC_0) + 4m_0]$, $\grade(R_0) = [h(\CC_0) + 4m_0 + 4m_1 + 4,h(\CC_0) + 4m_0]$, $\grade(S_1) = [h(\CC_0) + 2m_0 + 1, h(\CC_0) + 4m_0 + 4m_1 + 2]$, and $\grade(S_0) = [h(\CC_0), h(\CC_0) + 4m_0 + 2m_1 + 1]$.  Therefore, $S_1$ has the lowest highest weight of $R_1, R_0, S_1, S_0$, so by Lemma \ref{irred} we have $L_a(V_1) = S_1$.  

Now, notice that $\delta(R_1)$ and $\delta(R_0)$ have the same lowest weight, so there is a surjection $\delta(R_1) \onto \delta(R_0)$ which induces an injection $R_0 \into R_1$.  Hence, we see that $R_1/R_0$ is a finite dimensional representation with character $\chi_{M_1} - \chi_{M_0}$ and lowest weight $\CC_0$, meaning that $L_a(\CC_0)$ is a quotient of $R_1/R_0$.  Therefore, we may write $\chi_{L_a(\CC_0)} = \chi_{M_1} - \chi_{M_0} - l \chi_{M_0} - k \chi_{N_1}$ because $\chi_{M_1}, \chi_{M_0}, \chi_{N_1}$ form a basis for characters of finite dimensional representations of $H_a(W)$.  But the dimension of $L_a(\CC_0)$ in grade $4m_0$ must be non-negative, so $l = 0$.  Thus, we see that $\chi_{L_a(\CC_0)} = \chi_{(1, -1-k, 0, 1, -1-k, 0, k)}$ for some $k$; because $L_a(\CC_0)$ is a quotient of $R_1/R_0$, we have that $k \geq 0$.  By Corollary \ref{dimineq}, we have 
\[
\dim \Hom_{H_a(W)}(M_a(\CC_1), M_a(\CC_0)) \leq \dim \Hom_{\HH(\{q_{H, j}^*\})}(\widehat{\CC}_1, \widehat{\CC}_0) \leq 1.
\]
This combined with the fact that $M_a(\CC_1)$ has no self-extensions means that there can be at most one singular copy of $\CC_1$ in $M_a(\CC_0)$.  Hence, we find that $k = 0$ and therefore $\chi_{L_a(\CC_0)} = \chi_{1, -1, 0, 1, -1, 0, 0}$, meaning that $L_a(\CC_0) = R_1/R_0$.  We may now conclude that the finite dimensional irreducible representations in this case are $R_0, S_1, R_1/R_0$ with characters $\chi_{M_0}$, $\chi_{N_1}$, and $\chi_{M_1 - M_0}$, respectively.

\noindent \textbf{Case 2:} If $m_0 + 2m_1 + 3/2 < 0$ and $2m_0 + m_1 + 3/2 > 0$, we see that $(\tau_1, \tau_0, \rho_1, \rho_0) = (\CC_0, \CC_1, \CC_1, \CC_0)$ with $\grade(R_1) = [h(\CC_0), h(\CC_0) + 4m_0], \grade(R_0) = [h(\CC_0) + 4m_0 + 4m_1 + 4, h(\CC_0) + 4m_0], \grade(S_1) = [h(\CC_0) + 4m_0 + 4m_1 + 4, h(\CC_0) + 2m_0 - 1]$, and $\grade(S_0) = [h(\CC_0), h(\CC_0) + 4m_0 + 2m_1 + 1]$. 

If $m_0 + m_1 + 1> 0$, then we have $h(\CC_1) > h(\CC_0)$.  Hence, we conclude that $S_1$ has the lowest highest weight and is irreducible by Lemma \ref{irred}, so $L_a(\CC_1) = S_1$.  Because $R_0$ has lowest weight $\CC_1$, we may consider the representation $\ker(R_0 \to S_1)$; notice that it has character $\chi_{M_0} - \chi_{N_1}$.  Then, applying Lemma \ref{irred} to $R_0/S_1$ and the basis $\{M_0 - N_1, N_1, M_1\}$ shows that $\ker(R_0 \to S_1)$ is irreducible.  Now, notice that $S_0$ has unique highest weight among $\{R_1, R_0, S_1, S_0\}$, hence it has an irreducible quotient $L$ with the same highest weight.  Because $S_1$ has the only lower highest weight, we have $\chi_L = \chi_{N_0 - k N_1} = \chi_{(1, -k, 0, 1, -k, 0, -1 + k)}$ for some $k \geq 0$.  But by Lemma \ref{mappings}, there are no singular copies of $\CC_1$ in $M_a(\CC_0)$, meaning that $k = 0$, hence $L = S_1$.  Therefore, the finite dimensional irreducible representations are $S_1$, $\ker(R_0 \to S_1)$, and $S_0$ with characters $\chi_{(0, 1, 0, 0, 1, 0, -1)}$, $\chi_{(0, 0, 1, -1, -1, 0, 1)}$, and $\chi_{(1, 0, 0, 1, 0, 0, -1)}$.

If instead $m_0 + m_1 + 1 < 0$, then we have $h(\CC_1) < h(\CC_0)$.  Hence, $S_0$ has the lowest highest weight and is irreducible by Lemma \ref{irred}.  Because $R_1$ has lowest weight $\CC_0$, we may consider the representation $\ker(R_1 \to S_0)$; notice that it has character $\chi_{M_1} - \chi_{N_0}$.  Applying Lemma \ref{irred} to $\ker(R_1 \to S_0)$ and the basis $\{M_1 - N_0, N_0, M_0\}$ shows that $\ker(R_1 \to S_0)$ is irreducible.  Now, notice that $S_1$ has unique highest weight among $\{R_1, R_0, S_1, S_0\}$, so it has an irreducible quotient $L$ with the same highest weight.  Because $S_0$ has the only lower highest weight, we see that $\chi_L = \chi_{N_1 - k N_0} = \chi_{(-k, 1, 0, -k, 1, 0, -1 + k)}$ for some $k \geq 0$.  But by Lemma \ref{mappings}, there are no singular copies of $\CC_0$ in $M_a(\CC_1)$, implying that $k = 0$, hence $L = S_0$.  Thus, the finite dimensional irreducible representations are $S_0$, $\ker(R_1 \to S_0)$, and $S_1$ with characters $\chi_{(1, 0, 0, 1, 0, 0, -1)}$, $\chi_{(0, 0, 1, -1, -1, 0, 1)}$, and $\chi_{(0, 1, 0, 0, 1, 0, -1)}$.

\noindent \textbf{Case 3:} If $2m_0 + m_1 + 3/2 < 0$, we see that $(\tau_1, \tau_0, \rho_1, \rho_0) = (\CC_0, \CC_1, \CC_1, V_0)$ with $h(\CC_1) < h(\CC_0)$ and $\grade(R_1) = [h(\CC_1) + -4m_0 - 4m_1 - 4, h(\CC_1) - 4m_1 - 4]$, $\grade(R_0) = [h(\CC_1), h(\CC_1) - 4m_1 - 4]$, $\grade(S_1) = [h(\CC_1), h(\CC_1) - 2m_0 - 4m_1 - 5]$, and $\grade(S_0) = [h(\CC_1) - 2m_1 - 1, h(\CC_1) - 4m_0 - 4m_1 - 6]$.  Because $R_1$ and $S_0$ have highest lowest weight and lowest highest weight, they are irreducible.  Now, notice that $R_1$ and $R_0$ have the same highest weight, meaning that $\delta(R_1)$ and $\delta(R_0)$ have the same lowest weight.  Therefore, we have a surjection $\delta(R_0) \onto \delta(R_1)$ which induces an injection $R_1 \into R_0$.  Then, note that $R_0/R_1$ is a representation of lowest weight $\CC_1$ and character $\chi_{M_0 - M_1} = \chi_{N_1 - N_0}$.  Hence, $L_a(\CC_1)$ is a quotient of $R_0/R_1$, so it has character $\chi_{L_a(\CC_1)} = \chi_{R_0/R_1 - k N_0} = \chi_{(-1 - k, 1, 0, -1 - k, 1, k)}$ for some $k \geq 0$.  Now, by Corollary \ref{dimineq}, we have
\[
\dim \Hom_{H_a(W)}(M_a(\CC_0), M_a(\CC_1)) \leq \dim \Hom_{\HH(\{q_{H, j}^*\})}(\widehat{\CC}_0, \widehat{\CC}_1) \leq 1.
\]
Combined with the fact that $M_a(\CC_0)$ has no self-extensions, this means that there is at most one singular copy of $\CC_0$ in $M_a(\CC_1)$, hence $k = 0$.  This means that $L_a(\CC_1) = R_1/R_0$.  Therefore, the finite dimensional irreducible representations are $R_1$, $S_0$, and $R_0/R_1$ with characters $\chi_{(1, 0, 1, 0, -1, 0, 0)}$, $\chi_{(1, 0, 0, 1, 0, 0, -1)}$, and $\chi_{(1, -1, 0, 1, -1, 0, 0)}$.

\item Now, suppose that $m_0 < 0$, $m_1 \geq 0$.  The map $\delta$ of Lemma \ref{flip} gives a map between irreducible representations of $H_a(W)$ and $H_{a'}(W)$, where $a' = (m_1 + 1/2, m_0 + 1/2, -m_0 - m_1 - 1) = (m_0' + 1/2, m_1' = 1/2, -m_0' - m_1' - 1)$, where $m_0' = m_1$ and $m_1' = m_0$.  In particular, by considering limits along the lines incident to $a$, we find that $\delta(R_1) = R_0'$, $\delta(R_0) = R_1'$, $\delta(S_1) = S_0'$, and $\delta(S_0) = S_1'$, where $R_1', R_0', S_1'$, and $S_0'$ are the representations of $H_{a'}(W)$ on the lines $a_0 = m_0' + 1/2$, $a_1 = m_1' + 1/2$, $a_1 - a_2 = m_0' + 2m_1' + 3/2$, and $a_0 - a_2 = 2m_0' + m_1' + 3/2$, respectively.  We again divide into cases.

\noindent \textbf{Case 1:} If $2 m_0 + m_1 + 3/2 > 0$, then we have that $m_0' + 2 m_1' + 3/2 > 0$.  Therefore, by the previous case, $\delta(R_1)$, $\delta(S_0)$ and $\coker(\delta(R_0) \to \delta(R_1))$ are irreducible representations of $H_{a'}(W)$.  Applying $\delta$, this means that the irreducible representations of $H_a(W)$ are $R_1$, $S_0$, and $\ker(R_1 \to R_0)$, with characters $\chi_{(1, 0, 1, 0, -1, 0, 0)}$, $\chi_{(1, 0, 0, 1, 0, 0, -1)}$, and $\chi_{(-1, 1, 0, -1, 1, 0, 0)}$. 

\noindent \textbf{Case 2:} If $2 m_0 + m_1 + 3/2 < 0, m_0 + 2m_1 + 3/2 > 0, m_0 + m_1 + 1 > 0$, then $m_0' + 2 m_1' + 3/2 < 0, 2m_0' + m_1' + 3/2 > 0, m_0' + m_1' + 1 > 0$.  In this case, we see that $\delta(S_0)$, $\delta(S_1)$, and $\ker(\delta(R_1) \to \delta(S_0))$ are the irreducible representations of $H_{a'}(W)$.  Applying $\delta$ and using exactness, we see that the irreducible representations of $H_a(W)$ are $S_0$, $S_1$, and $\coker(S_0 \to R_1)$, with characters $\chi_{(1, 0, 0, 1, 0, 0, -1)}$, $\chi_{(0, 1, 0, 0, 1, 0, -1)}$, and $\chi_{(0, 0, 1, -1, 1, 0, 1)}$. 

If $2 m_0 + m_1 + 3/2 < 0, m_0 + 2m_1 + 3/2 > 0, m_0 + m_1 + 1 < 0$, then $m_0' + 2 m_1' + 3/2 < 0, 2m_0' + m_1' + 3/2 > 0, m_0' + m_1' + 1 < 0$.  In this case, we see that $\delta(S_0)$, $\delta(S_1)$, and $\ker(\delta(R_0) \to \delta(S_1))$ are irreducible representations of $H_{a'}(W)$.  Applying $\delta$, the irreducible representations of $H_a(W)$ are $S_0$, $S_1$, and $\coker(S_1 \to R_0)$, with characters $\chi_{(1, 0, 0, 1, 0, 0, -1)}$, $\chi_{(0, 1, 0, 0, 1, 0, -1)}$, and $\chi_{(0, 0, 1, -1, 1, 0, 1)}$. 

\noindent \textbf{Case 3:} If $m_0 + 2m_1 + 3/2 < 0$, then $2m_0' + m_1' + 3/2 < 0$, which means that $\delta(R_0), \delta(S_1)$, and $\coker(\delta(R_0) \to \delta(R_1))$ are irreducible representations of $H_{a'}(W)$.  Applying $\delta$, the irreducible representations of $H_a(W)$ are $R_0$, $S_1$, and $\ker(R_1 \to R_0)$, with characters $\chi_{(0, 1, 1, -1, 0, 0, 0)}$, $\chi_{(0, 1, 0, 0, 1, 0, -1)}$, and $\chi_{(1, -1, 0, 1, -1, 0, 0)}$. 
\end{itemize}
This completes the analysis in the case $i = 0$.  Thus far, we have shown that the finite dimensional irreducible representations correspond to the claimed list when $i = 0$. For $i \neq 0$, the equivalence $\omega$ given by Lemma \ref{rotate} satisfies $\omega(L_{a_0, a_1, a_2}(\tau)) = L_{a_2, a_0, a_1}(\tau \otimes \CC_2)$.  Therefore, for general $i \neq 0$, we may obtain all irreducible representations of $H_a(W)$ by applying $\omega$ either once or twice to the representations found for $i = 0$.  Combining this with the fact that $\omega(M_{a_0, a_1, a_2}(\CC_i)) = M_{a_2, a_0, a_1}(\CC_{i-1})$, $\omega(M_{a_0, a_1, a_2}(V_i)) = M_{a_2, a_0, a_1}(V_{i-1})$, and $\omega(M_{a_0, a_1, a_2}(\CC^3)) = M_{a_2, a_0, a_1}(\CC^3)$ completes the proof.
\end{proof}

\section*{Acknowledgments}

This research was done with the support of a 2009 Herchel Smith Harvard Undergraduate Summer Research Fellowship.  The author would like to thank Pavel Etingof for suggesting this problem and for many helpful conversations.

\end{document}